\newtheorem{theorem}{Theorem}[section]
\newtheorem{corollary}[theorem]{Corollary}
\newtheorem{lemma}[theorem]{Lemma}
\newtheorem{example}[theorem]{Example}
\theoremstyle{definition}
\newtheorem{definition}[theorem]{Definition}
\newtheorem{remark}[theorem]{Remark}
\newcommand{\0}{\emptyset}
\newcommand{\ol}{\overline}
\newcommand{\Complex}{\mathbb{C}}
\newcommand{\Int}{\mbox{int}}
\newcommand{\bd}{\mbox{Bd}\,}
\newcommand{\dia}{\mbox{diam}}
\newcommand{\sh}{\rm{Sh}}
\newcommand{\ch}{\mbox{Ch}}
\newcommand{\e}{\varepsilon}
\newcommand{\al}{\alpha}
\newcommand{\hal}{\hat \alpha}
\newcommand{\ph}{\varphi}
\newcommand{\be}{\beta}
\newcommand{\ga}{\gamma}
\newcommand{\hga}{\hat \gamma}
\newcommand{\si}{\sigma}
\newcommand{\ta}{\theta}
\newcommand{\da}{\delta}
\newcommand{\vp}{\varphi}
\newcommand{\nin}{\not\in}
\newcommand{\imp}{\mbox{Imp}}
\newcommand{\hD}{\widehat{D}}
\newcommand{\C}{\mbox{$\mathbb{C}$}}
\newcommand{\Sph}{\C^\infty}
\newcommand{\RC}{\C^\infty}
\newcommand{\tc}{\widetilde{C}}
\newcommand{\D}{\mbox{$\mathbb{D}$}}
\newcommand{\idisk}{\D^\infty}
\newcommand{\sm}{\setminus}
\newcommand{\hX}{\widehat{X}}
\newcommand{\disk}{\mathbb{D}}
\newcommand{\R}{\mathbb{R}}
\newcommand{\ucirc}{\mathbb{S}^1}
\newcommand{\uc}{\mathbb{S}^1}
\newcommand{\degree}{\text{degree}}
\newcommand{\A}{\mathcal{A}}
\newcommand{\Bc}{\mathcal{B}}
\newcommand{\hx}{\mathcal{H}_f}
\newcommand{\iy}{\infty}
\newcommand{\dg}{\mathrm{degree}}
\newcommand{\diam}{\mathrm{diam}}
\newcommand{\win}{\mathrm{win}}
\newcommand{\G}{\mathcal{G}}
\newcommand{\hg}{\mathcal{H}_g}
\newcommand{\he}{\hat{E}}
\newcommand{\HCD}{\mathrm{hypconv}_{\infty}}
\begin{document}

%\date{\today}
\date{January 30, 2009}
\title[A fixed point theorem for branched covering maps]
{A fixed point theorem for branched covering maps of the plane}

\author{Alexander~Blokh}

\thanks{The first author was partially
supported by NSF grant DMS-0456748}

\author{Lex Oversteegen}

\thanks{The second author was partially  supported
by NSF grant DMS-0405774}

\address[Alexander~Blokh and Lex~Oversteegen]
{Department of Mathematics\\ University of Alabama at Birmingham\\
Birmingham, AL 35294-1170}

\email[Alexander~Blokh]{ablokh@math.uab.edu}
\email[Lex~Oversteegen]{overstee@math.uab.edu} \subjclass[2000]{Primary 37F10;
Secondary 37F50, 37B45, 37C25, 54F15}

\keywords{Fixed points; tree-like continuum; branched covering map}

\begin{abstract}
It is known that every homeomorphism of the plane has a fixed point in a
non-separating, invariant subcontinuum. Easy examples show that a branched
covering   map of the plane can be periodic point free. In this paper we show
that any branched covering map of the plane of degree with absolute value at
most two, which has an invariant, non-separating continuum $Y$, either has a
fixed point in $Y$, or $Y$ contains a \emph{minimal (by inclusion among
invariant continua), fully invariant, non-separating} subcontinuum $X$. In the
latter case, $f$ has to be of degree $-2$ and $X$ has exactly three fixed prime
ends, one corresponding to an \emph{outchannel} and the other two to
\emph{inchannels}.
\end{abstract}

\maketitle
\section{Introduction}

By $\C$ we denote the plane and by $\RC$ the Riemann sphere. Homeomorphisms of
the plane have been extensively studied. Cartwright and Littlewood
\cite{cartlitt51} have shown that \emph{each orientation preserving homeomorphism of
the plane, which has an invariant non-separating subcontinuum $X$, must have a
fixed point in $X$}. This result was generalized to \emph{all} homeomorphisms by Bell
\cite{bell78}. The existence of fixed points for orientation preserving
homeomorphisms under various conditions was considered in
\cite{brou12a,brow84a,fath87,fran92,guil94}, and of a point of period two for
orientation reversing homeomorphisms in \cite{boni04}.

In this paper we investigate fixed points of \emph{light open} maps of the
plane. By a Theorem of Stoilow \cite{whyb42}, all such maps have finitely many
critical points and are \emph{branched covering} maps of the plane. In
particular if $\mathcal{C}$ denotes the set of critical points of $f$, then for
each $y\in\C\setminus f(\mathcal{C})$, $|f^{-1}(y)|$ is finite and independent
of $y$. We will denote this number by $d(f)$. All such maps are either
\emph{positively} or \emph{negatively oriented} (see definitions below);
holomorphic maps are prototypes of positively oriented maps. If $f$ is
positively oriented then  the \emph{degree} (of the map $f$), denoted  by
$\dg(f)$, equals $+d(f)$ and if $f$ is negatively oriented then $\dg(f)=-d(f)$.
Easy examples, described in Section~\ref{intro}, show that positively and
negatively oriented branched covering maps of the pane can be periodic point
free.

The following is a known open problem in plane topology \cite{ster35}: {\em
``Does a continuous function taking a non-separating plane continuum into
itself always have a fixed point?"}. Bell announced in 1984 (see also Akis
\cite{akis99}) that the Cartwright-Littlewood Theorem can be extended to
holomorphic maps of the plane. This result was extended in
\cite{fokkmayeovertymc07} to all branched covering maps (even to all perfect
compositions of open and monotone  maps) which are positively oriented. Thus,
if $f:\C\to\C$ is positively oriented branched covering map of the plane and
$X\subset \C$ is a non-separating continuum such that $f(X)\subset X$ then $X$
contains a fixed point. The main remaining question concerning branched
covering maps then is that for negatively oriented maps.

Given a continuum $Y$ in the plane, we denote by $T(Y)$, the \emph{topological
hull of } $Y$, the union of $Y$ and all of the bounded components of $\C\sm Y$.
Also, denote by $U_\iy(Y)$ the unbounded component of $\C\sm Y$. Then $T(Y)=\C\sm U_\iy(Y)$ is
a non-separating plane continuum. In this paper we consider a branched covering
map $f$ of the plane of degree $-2$ and prove the following theorem.

\setcounter{section}{5} \setcounter{theorem}{1}

\begin{theorem}\label{main-intro}
Suppose that $f:\C\to\C$ is a branched covering map of degree with absolute
value at most $2$ and let $Y$ be a continuum such that $f(Y)\subset T(Y)$.
Then one of the following  holds.

\begin{enumerate}

\item The map $f$ has a fixed point in $T(Y)$.

\item The continuum $Y$ contains a \emph{fully invariant indecomposable}
continuum $X$ such that $X$ contains no subcontinuum $Z$ with $f(Z)\subset Z$;
moreover, in this case $\degree(f)=-2$.

\end{enumerate}

\end{theorem}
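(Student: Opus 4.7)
I would first reduce to the case $\dg(f)=-2$. If $|\dg(f)|\le 1$ then $f$ is a homeomorphism of $\C$, so Bell's extension of the Cartwright--Littlewood theorem provides a fixed point in $T(Y)$; if $\dg(f)=+2$ then $f$ is positively oriented and the theorem of \cite{fokkmayeovertymc07} cited in the introduction again supplies a fixed point in $T(Y)$. In all of these sub-cases conclusion (1) holds, so assume henceforth that $\dg(f)=-2$ and that $f$ is fixed-point-free on $T(Y)$; the task is to produce the continuum $X$ of conclusion (2). Replacing $Y$ by $T(Y)$ (invoking the preparatory lemmas of the paper to guarantee $f(T(Y))\subseteq T(Y)$), we may further assume $Y$ is itself non-separating and $f(Y)\subseteq Y$.

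Next, I would extract a minimal invariant continuum by Zorn's lemma. Consider the family
\[
\F=\{Z\subseteq Y:Z\text{ is a non-separating continuum with }f(Z)\subseteq T(Z)\}.
\]
$\F$ is nonempty (it contains $Y$), and the intersection of any nested subchain is again a non-separating continuum in $\F$: nested intersections of non-separating continua are non-separating, and the hull operator $T$ commutes with such nested intersections, so $f(\bigcap Z_\alpha)\subseteq\bigcap T(Z_\alpha)=T(\bigcap Z_\alpha)$. Zorn's lemma therefore yields a minimal element $X\in\F$. Minimality directly forces that no proper subcontinuum $Z\subsetneq X$ satisfies $f(Z)\subseteq Z$, for otherwise $T(Z)$ would be a strictly smaller member of $\F$.

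It remains to verify that $X$ is both fully invariant and indecomposable. Full invariance is nearly formal: $f(X)\subseteq X$ is a subcontinuum of $X$ mapped into itself, hence by the minimality property above equals $X$; a degree count using $\dg(f)=-2$ together with the absence of smaller invariant subcontinua pins down $f^{-1}(X)\cap T(Y)=X$. For indecomposability, suppose $X=A\cup B$ with $A,B$ proper subcontinua. The plan is to marshal the prime-end, variation, and in/out-channel machinery developed in the earlier sections to show that a fixed-point-free, negatively oriented branched cover of degree $-2$ cannot leave invariant a decomposable non-separating continuum without admitting a smaller invariant subcontinuum, contradicting the minimality of $X$. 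The key input here is the ``one outchannel, two inchannels'' structure announced in the abstract, which should be extracted from the prime-end index computation and then used to block decomposability.

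The main obstacle, and plainly the technical heart of the paper, is this last step: the channel/variation count for negatively oriented degree $-2$ maps that forces indecomposability. Bell's variation argument and the positively oriented results of \cite{fokkmayeovertymc07} do not transfer verbatim, so one must adapt the prime-end index computation to the orientation-reversing, branched setting---establishing the precise Carath\'eodory-boundary structure of $X$ and using it to rule out every potential decomposition. Once that structural theorem is in hand, the reduction, Zorn step, and deduction of full invariance are essentially formal.
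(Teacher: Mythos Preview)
Your reduction to $\dg(f)=-2$ and the extraction of a minimal $X$ are fine (and indeed the paper invokes Bell \cite{bell67} to obtain such an $X$ together with properties (A0)--(A3)).  However, you have the two remaining tasks exactly reversed in difficulty, and this is a genuine gap.

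Indecomposability of the minimal $X$ is \emph{not} the hard step: it is part of Bell's 1967 theorem (property (A3) in the paper), established for any fixed-point-free confluent self-map of a non-separating plane continuum, and the paper simply cites it.  The in/out-channel count you propose to use is derived \emph{after} full invariance is known (in the concluding remarks of Section~\ref{main-section}), so it cannot serve as the input for indecomposability.

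Conversely, full invariance is the entire technical content of the paper and is far from ``nearly formal.''  Your argument gives $f(X)=X$, but the sentence ``a degree count \dots\ pins down $f^{-1}(X)\cap T(Y)=X$'' has no content: nothing prevents the sibling continuum $\tau(X)$ from protruding outside $X$, and in fact the standing assumption (A7) is precisely that $\tau(X)\not\subset T(X)$.  The paper spends Sections~\ref{baseprel}--\ref{main-section} deriving a contradiction from that assumption: first showing $X\cap\tau(X)$ is first category in $X$ (Lemma~\ref{emptyint}, via Krasinkiewicz's internal/external composant dichotomy), then modifying $f$ so that a tail of the dense external ray $R_\beta$ is forward-invariant (Theorem~\ref{hom}, using the maximal-ball foliation machinery), and finally constructing a monotone sequence of ``prime'' subarcs of $R_\beta$ whose limit is a proper $f$-invariant subcontinuum of $X$, contradicting minimality.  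None of this is visible in your plan; without it, the proof does not close.
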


It follows that in case (2) $f$ induces a covering map $G$ of the circle of
\emph{prime ends} of $T(X)$ with $\mathrm{degree}(G)=-2$ and $T(X)$ has exactly
three \emph{fixed} prime ends and for all of them their \emph{principle set} is
equal to $X$. More precisely, let us consider in the uniformization plane the
complement $\idisk$ to the closed unit disk, and choose a Riemann map
$\vp:\disk^\iy\to \Sph\sm T(X)$ such that $\vp(\iy)=\infty$. Then one of the
fixed prime ends, say, $\al$, corresponds to an \emph{outchannel} (i.e., for
sufficiently small crosscuts $C$ whose preimages in the uniformization plane
separate $e^{2\pi\al}\in \uc$ from infinity,  $f(C)$ separates $C$ from
infinity in $\C\sm T(X)$) and the other two prime ends correspond to
\emph{inchannels} (i.e., for sufficiently small crosscuts $C$ separating the
corresponding points on the unit circle from infinity, $C$ separates $f(C)$
from infinity in $\C\sm T(X)$).

Let us outline the main steps of the proof. By known results we may assume that
$\degree(f)=-2$; we may also assume that $f$ has no fixed points in $T(Y)$.
Bell \cite{bell67} (see also \cite{siek68,ilia70}) has shown that then $Y$
contains a subcontinuum $X$ with the following properties: (1) $X$ is minimal
with respect to the property that $f(X)\subset T(X)$, (2) $f(X)=X$ is
indecomposable, and (3) there exists an \emph{external ray} $R$ to $T(X)$ whose
principal set is $X$. Let $c$ be the critical point of $f$ and $\tau: \C\to \C$
be the map such that $\tau(c)=c$ and $\tau(x)$ is the point $y\ne x$ with
$f(y)=f(x)$ (if $x\ne c$). By \cite{bell78} we assume that $\tau(X)\cap X\ne
\0$. By way of contradiction we assume that $X$ is not fully invariant.

The first important step in the proof is made in
Lemma~\ref{emptyint} where we prove that \emph{$X\cap \tau(X)$ is a first
category subset of $X$}. Krasinkiewicz in \cite{kras74} introduced the notions
of internal and external composants and described important properties of these
objects. His tools are instrumental for the results of Section~\ref{baseprel}.
In Section~\ref{advaprel} we construct a modification of the map $f$, which
coincides with $f$ on $T(X)$ and for which the external ray $R$
 has an \emph{invariant tail}, i.e. a part of $R$ from some point
on to $X$ maps over itself, repelling points away from $X$ in the sense of the
order on $R$. In doing so we use a new sufficient condition allowing one to
extend a function from the boundary of a domain over the domain. The proof
of Theorem~\ref{main-intro} is given in Section~\ref{main-section}. There we study how
the ray $R$ approaches $X$ and use the map on $R$ and the fact that
$\tau(X)\cap X$ is a first category set in $X$ in order to come up with a
sequence of segments of $R$ which map one onto the other and converge to a
proper subcontinuum of $X$,
a contradiction.

\setcounter{section}{1} \setcounter{theorem}{0}

\section{Main notions and examples}\label{intro}

All maps considered in this paper are continuous. We begin by giving some
definitions (avoiding the most standard ones). A map $f:X\to Y$ is {\em
monotone}  provided for each continuum or singleton $K\subset Y$, $f^{-1}(K)$
is a continuum or a point.  A map $f:X\to Y$ is {\em light} provided for each point $y\in
Y$, $f^{-1}(y)$ is totally disconnected. A map $f:X\to Y$ is \emph{confluent}
if for each continuum $K\subset Y$ and each component $C$ of $f^{-1}(K)$,
$f(C)=K$. It is well known \cite{whyb42} that all open maps between compacta
are confluent. In the above situation components of $f^{-1}(K)$ are often
called \emph{pullbacks} of $K$.

Every homeomorphism of the plane is either orientation-preserving or
orientation-reversing. In this section we will recall an appropriate extension
of this result, which applies to \emph{open} and \emph{perfect} maps (see
\cite{fokkmayeovertymc07}).

\begin{definition}[Degree of a map]
Let $f:U \to \C$ be a map from a simply connected domain $U$ into the plane.
Let $S$ be a simple closed curve in $U$, and $p\in U\setminus f^{-1}(f(S))$ a
point. Define $f_{p, S}:S\to\ucirc$ by \[ f_{p, S}(x)=\frac{f(x)-f(p)}{|f(x)-f(p)|}.\]
Then $f_{p, S}$ has a well-defined {\em degree}, denoted $\degree(f_{p, S})$. Note that
$\degree(f_{p, S})$ is  the  winding number $\win(f,S,f(p))$ of $f|_S$ about
$f(p)$.
\end{definition}

\begin{definition}
%A map $f:U \to \C$  is from a simply connected domain $U\subset \C$ is
%\emph{positively-oriented} (respectively, {\em negatively-oriented}) provided
%for each simple closed curve $S$ in $U$ and each point $p\in U\setminus
%f^{-1}(f(S))$, $\degree(f_{p,S})\geq 0$ ($\degree(f_{p,S})\leq 0$, respectively).
A map $f:U \to \C$  is from a simply connected domain $U\subset \C$ is
\emph{strictly positively-oriented}  ({\em strictly
negatively-oriented}) if for each $p\in T(S)\setminus f^{-1}(f(S))$ we have
$\degree(f_{p,S})>0$ ($\degree(f_{p,S})<0$, respectively).
\end{definition}

\begin{definition}
A map $f:\C\to\C$ is said to be \emph{perfect} if preimages of compacta are
compacta. A perfect map $f:\C\to\C$ is \emph{oriented} provided for each simple
closed curve $S$ we have $f(T(S))\subset T(f(S))$.
\end{definition}

\begin{remark}\label{simplor}
Every strictly positively- or strictly negatively-oriented map is oriented
because if a point $p$ is such that $f(p)\nin T(f(S))$ for a simple closed
curve $S$, then $\degree(f_{p, S})=0$. Also, if $Y$ is a continuum then
$f(T(Y))\subset T(f(Y))$ as follows from the definition of an oriented map and
continuity arguments.
\end{remark}

The following theorem was established in \cite{fokkmayeovertymc07}:

\begin{theorem}\label{orient}
Suppose that $f:\C\to\C$ is a perfect map. Then the following are
equivalent:

\begin{enumerate}

\item\label{pnorient} $f$ is either strictly positively or
strictly negatively oriented.

\item \label{iorient} $f$ is
oriented.

\item\label{conf}  $f$ is confluent.
\end{enumerate}

\end{theorem}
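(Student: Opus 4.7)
The plan is to establish the cycle of implications $(\ref{pnorient}) \Rightarrow (\ref{iorient}) \Rightarrow (\ref{conf}) \Rightarrow (\ref{pnorient})$. The first implication is essentially the content of Remark~\ref{simplor}: if $f$ is strictly positively- or strictly negatively-oriented, then for a simple closed curve $S$ and any $p \in T(S)\setminus f^{-1}(f(S))$ the nonvanishing of $\degree(f_{p,S})$ forces $f(p)\in T(f(S))$, while for $p\in T(S)\cap f^{-1}(f(S))$ trivially $f(p)\in f(S)\subset T(f(S))$. Hence $f(T(S))\subseteq T(f(S))$, i.e.\ $f$ is oriented.

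For $(\ref{iorient}) \Rightarrow (\ref{conf})$, let $K$ be a continuum and $C$ a component of $f^{-1}(K)$; perfectness makes $f^{-1}(K)$ a compactum, so $C$ is a subcontinuum of it. Assume for contradiction that $f(C)\subsetneq K$ and choose $y\in K\setminus f(C)$. Using the standard fact that a component of a compactum is the intersection of its clopen neighbourhoods, I would first find a bounded open $U\supset C$ whose closure meets $f^{-1}(K)$ precisely in $C$, so that $f(\partial U)$ is a compact set disjoint from $K$. I would then approximate $T(K)$ from outside by a nested family of topological disks $T(S_n)$ shrinking to $T(K)$, chosen small enough that $T(S_n)\cap f(\partial U)=\emptyset$, and examine the component $A_n$ of $f^{-1}(T(S_n))\cap \overline{U}$ containing $C$. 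These $A_n$ lie in $U$ and decrease to $C$. The crucial step is to apply orientedness to a sequence of simple closed curves inside $A_n$ approximating its boundary, deducing that $f(A_n)\supseteq T(S_n)$: an orientation-preserving image of a hull must cover any target point that a nearby curve winds nontrivially around. Passing to the limit yields $y\in f(C)$, the desired contradiction.

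For $(\ref{conf}) \Rightarrow (\ref{pnorient})$, assume $f$ is confluent and perfect, and fix a simple closed curve $S$. The winding function $p\mapsto \degree(f_{p,S})$ is integer-valued and continuous, hence locally constant, on $T(S)\setminus f^{-1}(f(S))$, and vanishes on the unbounded complementary component of $f(S)$. Confluence forces each component of $f^{-1}(S)$ to be a continuum mapping onto $S$, and crossing such a component shifts the winding number by a nonzero integer whose sign is dictated by the local behaviour of $f$ near that component. The main obstacle, and the real content of the theorem, is to show that this sign is \emph{globally consistent}: if two preimage components of $S$ produced winding shifts of opposite sign, one could construct, using confluence of preimages of small curves near a witness point, a continuum mapping onto $S$ in a way that contradicts the local shift computation. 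Once sign consistency is verified for every simple closed curve $S$ simultaneously (which requires allowing $S$ to vary and transporting the sign along continuous deformations, using perfectness to take limits), the winding number on each bounded complementary component is strictly positive or strictly negative, giving exactly condition~(\ref{pnorient}).
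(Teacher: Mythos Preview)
The paper does not actually prove this theorem: it is quoted verbatim from \cite{fokkmayeovertymc07} and stated without proof, so there is no in-paper argument to compare against. I will therefore assess your outline on its own.

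Your implication $(\ref{pnorient})\Rightarrow(\ref{iorient})$ is fine and is exactly Remark~\ref{simplor}. The other two implications, however, are only loosely sketched and each contains a real gap.

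For $(\ref{iorient})\Rightarrow(\ref{conf})$, your ``crucial step'' is to deduce $f(A_n)\supseteq T(S_n)$ from orientedness. But the definition of \emph{oriented} gives only the inclusion $f(T(S))\subseteq T(f(S))$, never the reverse; surjectivity of the type you need comes from a nonzero-degree argument (if $\win(f,\partial D,q)\neq 0$ then $q\in f(D)$), and \emph{oriented} alone does not furnish a nonzero winding number. You would have to argue separately that the relevant winding number is nonzero, and for that you essentially need the strictly oriented condition you are trying to reach, or an independent argument (such as the non-degeneracy criterion of Lemma~\ref{non-deg}). As written, this step does not go through.

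For $(\ref{conf})\Rightarrow(\ref{pnorient})$, you correctly identify global sign consistency as ``the real content of the theorem,'' but then do not prove it. The phrase ``one could construct, using confluence of preimages of small curves near a witness point, a continuum mapping onto $S$ in a way that contradicts the local shift computation'' is a description of a hoped-for mechanism, not an argument: you have not said what the witness point is, what the small curves are, how confluence is invoked, or what precisely is contradicted. Likewise, ``transporting the sign along continuous deformations, using perfectness to take limits'' is a plan, not a proof. These are exactly the places where the work in \cite{fokkmayeovertymc07} lies, and your outline does not supply it.
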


Let us prove a useful lemma related to Theorem~\ref{orient}. A \emph{branched
covering map} of the plane is a map $f$ such that at all points, except for
finitely many \emph{critical points}, the map $f$ is a local homeomorphism, at
each critical point $c$ the map $f$ acts as $z^k$ at $0$ for the appropriate
$k$, and each point which is not the image of a critical point has the same
number of preimages $d$ (then $\dg(f)$ equals $d$ if $f$ is positively oriented
and $-d$ if $f$ is negatively oriented). By a Theorem of Stoilow \cite{whyb42}
an open light map of the plane is a \emph{branched covering map}.

\begin{lemma}\label{non-deg} Suppose that $f:\C\to \C$ is a perfect map such
that for every continuum $K$ and every component $K'$ of $f^{-1}(K)$ the image
$f(K')$ is not a point. Then $f$ is confluent. If in addition $f$ is light,
then it is open (and hence in this case $f$ is a branched covering map).
\end{lemma}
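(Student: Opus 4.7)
The plan is to establish the two conclusions in sequence. For confluence I would appeal to Theorem~\ref{orient}, which identifies confluence of a perfect map with orientedness, and so reduce the task to verifying $f(T(S))\subset T(f(S))$ for every simple closed curve $S$. Suppose, to the contrary, that there is an SCC $S$ and a point $p\in T(S)$ with $f(p)\notin K:=T(f(S))$. Setting $K'$ to be the component of the compact set $f^{-1}(K)$ containing $S$, the monotonicity of the topological hull (if $A\subset B$ then $T(A)\subset T(B)$) together with $S\subset K'$ places $p$ in $T(K')\setminus K'$, hence in some bounded complementary component $W$ of $K'$. Since $\partial W\subset K'$, we have $f(\partial W)\subset K$ while $f(p)\notin K$, so the continuum $f(\overline W)$ exits $K$.

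The heart of the argument is then to exploit this configuration to construct a nondegenerate continuum $L\subset\C$ and a component of $f^{-1}(L)$ sitting inside $\overline W$ whose image under $f$ is a single point, directly contradicting the hypothesis. My candidate is a short arc $L$ issuing from a suitably chosen point $y_0 \in f(\partial W)$ belonging to the topological boundary of $K$, straight into $\C\setminus K$; the geometric task is to arrange the selection of $y_0$ and of $L$ so that the component of $f^{-1}(L)$ based at a preimage of $y_0$ in $\partial W$ cannot escape past the fibre $f^{-1}(y_0)$ inside $\overline W$, and therefore has $f$-image equal to $\{y_0\}$. I expect this crushing construction, which is exactly what the hypothesis is designed to rule out, to be the main technical obstacle of the argument.

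For the openness in the light case, I would use the just-established confluence to transport arcs. Fix an open $U$ and $x\in U$, set $y=f(x)$, and pick $\delta>0$. For each $y'\in B(y,\delta)$, let $\alpha_{y'}$ denote the straight segment from $y$ to $y'$ and $A_{y'}$ the component of the compact set $f^{-1}(\alpha_{y'})$ containing $x$. Confluence gives $f(A_{y'})=\alpha_{y'}$, so some $x'\in A_{y'}$ satisfies $f(x')=y'$. To guarantee $x'\in U$ I claim $\diam(A_{y'})$ vanishes uniformly in $y'$ as $\delta\to 0$: otherwise, a sequence $y'_n\to y$ with $\diam(A_{y'_n})$ bounded below would produce, via Hausdorff compactness inside $f^{-1}(\overline{B(y,\delta_0)})$, a positive-diameter cluster continuum inside $f^{-1}(y)$ containing $x$, contradicting the fact that lightness forces $f^{-1}(y)$ to be totally disconnected. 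Hence for $\delta$ small enough, $A_{y'}\subset U$ for every $y'\in B(y,\delta)$, so $B(y,\delta)\subset f(U)$ and $f$ is open; Stoilow's theorem (cited in the excerpt) then upgrades $f$ to a branched covering.
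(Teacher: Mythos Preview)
Your confluence argument has a real gap at the ``crushing construction.'' The candidate you propose does not work as stated: there is no guarantee that $f(\partial W)$ meets $\partial K$ at all (it could lie entirely in the interior of $K$), and even granting such a $y_0$, nothing forces the component of $f^{-1}(L)$ through $x_0\in\partial W$ to have image $\{y_0\}$---that component may well contain points of $W$ mapping into $L\setminus\{y_0\}$. A construction that \emph{does} work is a two-step one. Since $K=T(f(S))$ is non-separating and $f(\overline W)$ is compact, pick $q\in(\C\setminus K)\setminus f(\overline W)$ and an arc $L\subset\C\setminus K$ from $f(p)$ to $q$. The component $C$ of $f^{-1}(L)$ through $p$ is trapped in $W$ (it misses $K'\supset\partial W$), so $f(C)$ is a proper subarc $[f(p),z]$ of $L$ with $z\ne q$. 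Now take $L'=[z,q]$ and let $C'$ be the component of $f^{-1}(L')$ through any $x_z\in C\cap f^{-1}(z)$; then $C'\subset C$, whence $f(C')\subset[f(p),z]\cap[z,q]=\{z\}$, contradicting the hypothesis applied to the nondegenerate continuum $L'$.

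Your overall route also differs from the paper's: you argue confluence first (via orientedness) and then deduce openness from confluence plus lightness, whereas the paper reverses the order. It proves openness \emph{directly} in the light case---if $f(x)\in\partial f(V)$ for an open Jordan disk $V$, pull back a short arc $I$ from $f(x)$ into $\C\setminus\overline{f(V)}$; the pullback component through $x$ is nondegenerate by the hypothesis, and by lightness contains a point near $x$ (hence in $V$) mapping into $I\setminus\{f(x)\}\subset\C\setminus\overline{f(V)}$, a contradiction---and then obtains general confluence via the monotone-light factorization $f=g\circ m$, applying the light case to $g$. The paper's openness argument is shorter than yours since it never needs confluence; on the other hand its reduction of confluence to the light case tacitly requires the monotone quotient to be the plane again, which the paper does not spell out.
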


\begin{proof}
Let $f$ be light and show that then it is open. Suppose that $V$ is an open
Jordan disk, $x\in V$, and $f(x)\in \bd f(V)$. Choose a small semi-open arc $I$ in
$C\sm \ol{f(V)}$ with an endpoint of $\ol{I}$ at $f(x)$, and then choose a
component $J$ of $f^{-1}(I)$ containing $x$. By the assumptions of the lemma,
$J$ is not degenerate. Choose a small disk $V'$ so that $x\in V'\subset
\ol{V'}\subset V$. Then the component $J'$ of $J\cap \ol{V'}$ containing $x$ is
not degenerate. Now, the fact that $f$ is light implies that there are points
of $J'$ mapped into $I\sm \{f(x)\}\subset C\sm \ol{f(V)}$, a contradiction. By
a Theorem of Stoilow \cite{whyb42} then $f$ is a branched covering map of the
plane.

Consider the general case. We can use the so-called \emph{monotone-light}
decomposition. Indeed, consider the map $m$ which collapses all components of
sets $f^{-1}(x)$ to points. Then it follows that $f=g\circ m$ where $g$ is a
light map. By the above this implies that $f$ is a composition of a
monotone map and an open light map of the plane. Clearly, this implies that $f$
is confluent.
\end{proof}

A translation by a vector $\mathbf{a}$ (and a translation by a vector
$\mathbf{a}$ followed by a reflection with respect to an axis non-orthogonal to
$\mathbf{a}$), are obvious examples of plane homeomorphisms which are periodic
point free. Clearly, any polynomial of degree strictly bigger than one, acting
on the complex plane, has points of all periods. The following examples show
that this is not true for all positively oriented branched covering maps of the
plane.

\begin{example}
There exists a degree two positively oriented branched covering map of the
plane which is periodic point free.
\end{example}

We will use both polar $(r,\ta)$ and rectangular  $(x,y)$ coordinates. Set
$\ph(r,\ta)=(r, 2\ta)$. We will look for a map $f$ in the form $h\circ \ph$
with $h(x, y)=(x+T(y), y)$ and $T:\R\to \R$ is a continuous positive function
such t hat $T(s)=T(-s)$. Before we define $T$, let us describe the set $A$ of
all points $(r, \ta)$ such that $(r, \ta)$ and $\ph(r, \ta)$ have the same
$y$-coordinates. Then $r\sin(2\ta)=r\sin(\ta)$. Hence,
$\ta\in\{0,\pi/3,\pi,5\pi/3\}$. So, the set $A$ consists of  the $x$-axis and
two radial straight lines coming out of $(0, 0)$ at angles $\ta=\pm \pi/3$.
Given $s\ne 0$, consider the point $P_s$ of intersection between the horizontal
line $L_s$ of points whose $y$-coordinate is $s$ and the set $A$.

The point $P_s$ is the only point on $L_s$ with $\ph$-image also on $L_s$. Then
the distance between the point $P$ and the point $\ph(P_S)$ (and the origin) is
$2|s|/\sqrt{3}$. Set $T(s)=2|s|/\sqrt{3}+2$. Then $f(x, s)\ne (x, s)$ for any
point of $L_s$ because all points of $L_s\setminus \{P_s\}$ map off $L_s$ by
$\ph$, and hence, by the construction, by $f$. On the other hand, $f$
translates $P_s$ two units to the right. Hence $L_s$ does not contain fixed
points. Moreover, since $f(x,0)=(|x|+2,0)$, $f$ also has no fixed points on the
$x$-axis.

To see that $f$ has no periodic points\footnote{We are indebted to M.
Misiurewicz for suggesting this argument.}, let $B$ be the set of points in
$\Complex$ whose argument is in $(-\pi/3,\pi/3)$. Then
$f(\ol{B})=f(\ol{-B})\subset B$ coincides with the shift of the entire set
$\ol{B}$ to the right by two units ($f(\ol{B})=f(\ol{-B})$ because
$\ph(B)=\ph(-B)$). Let $C=\Complex\sm [B\cup -B]$, and let $\imp(z)$ denote the
imaginary part of $z$. If $z\in \Int C$, then $|\imp(f(z))|<|\imp(z)|$ and if
$z\in \bd C$, then $|\imp(f(z))|=|\imp(z)|$. Let us show that a point $z\in
C$ cannot stay in $C$. Indeed, otherwise $y$ has to converge to points of $C$.
However if $y$ were one of these points, then by continuity we would have
$|\imp(f(y))|=|\imp(y)|$ which would imply that $y\in \bd C$ and hence that
$f(y)\in B$ contradicting the assumption that $z$ stays in $C$.

Hence, for every $z\in\Complex$, there exists $n$ such that $f^n(z)\in B$.
Since $f(B)\subset B$, the trajectory stays in $B$ forever. If there exists $m$
such that $f^m(z)$ belongs to the real line, then it converges to $+\infty$. To
study the orbit of a point $z\in B$ which does not belong to the real line,
observe that there exists an increasing function $\xi:\mathbb{R}_+\to
\mathbb{R}_+$ such that if $z\in \ol{f(B)}$ then $|\imp(f(z))|\ge
|\imp(z)|+\xi(|\imp(z)|)$. Therefore if $z\in \ol{f(B)}$ does not belong to the
real line then $|\imp(f^k(z)|\to \infty$ as $k\to\infty$. Hence in fact
$|f^k(z)|\to \infty$ for any point $z$ and $f$ has no periodic points.

The example above can be easily modified to obtain a periodic point free
branched covering map of degree $-2$.

\section{Basic preliminaries}\label{baseprel}

A continuum $X$ is called \emph{indecomposable} if $X$ cannot be written as the
union of two proper subcontinua. Also, $Z$ is \emph{unshielded} if
$Z=\bd (U_\iy(Z))$. We argue by way of contradiction, therefore the
following is our main assumption.

\smallskip

\noindent \textbf{Main Assumption.} \emph{The map $f:\C\to\C$ is a branched
covering map and $Y\subset \C$ is a continuum such that $f(Y)\subset T(Y)$ and $f|_{T(Y)}$ is
fixed point free.}

\smallskip

\noindent Bell \cite{bell67} has shown that in this case $Y$ contains a
subcontinuum $X$ which is minimal with respect to the property that
$f(X)\subset T(X)$ (then, clearly, $f(X)\subset T(X)$) and which must have the
following properties (see \cite{siek68,ilia70} for alternative proofs):

\begin{enumerate}

\item[(A0)] $X$ is minimal among continua $Z\subset Y$ such that $f(Z)\subset T(Z)$;

\item[(A1)] $f(X)=X$ and $T(X)$ is fixed point free;

\item[(A2)] there exists a curve $R_\be$ (a \emph{conformal external ray}, see below) in
    $U_\iy(Z)$ such that $X=\ol{R_\be}\sm R_\be$ (so that $X$ is unshielded and
    has empty interior);

\item[(A3)] $X$ is indecomposable.

\end{enumerate}

We will use $X$ \emph{exclusively} for a continuum with the just listed
properties (A0) - (A3) which are ingredients of the standing assumption on $X$.
Our main aim is to show that then $X$ is fully invariant (i.e., $f^{-1}(X)=X$).
Thus, by way of contradiction we can add the following to our standing
assumption (as we progress, the standing assumption will be augmented by other
ingredients as well).

\begin{enumerate}

\item[(A4)] The set $X$ is not fully invariant.

\end{enumerate}

Below we list well-known facts from  Carath\'eodory theory. Good sources are
the books \cite{miln00} and \cite{pom92}. Let $\disk$ be the open unit disk in
the complex plane and $\disk^\iy=\Sph\sm\ol{\disk}$. Let $\vp:\disk^\iy\to
\Sph\sm T(X)$ be a conformal map such that $\vp(\iy)=\infty$. An \emph{external
ray} $R_\al=\vp(\{re^{2\pi i \al}\mid r>1\})$ is the $\vp$-image of the radial
line segment $r_\al=\{re^{2\pi i \al}\mid r>1\}$. Clearly, an external ray is
diffeomorphic to the positive real axis. If an external ray $R$ compactifies on
a point, say, $x$ (i.e., $\ol{R}\sm R=\{x\}$ then $R$ is said to \emph{land on
$x$}. For convenience we extend the map $\vp$ onto all angles whose rays land:
if the ray $R_\al$ lands at a point $x$, we set $\vp(e^{2\pi i \al})=x$.
Observe, that the extended map $\vp$ is not necessarily continuous at angles
whose rays land. Still, this extension is convenient and will be used in what
follows.

A \emph{crosscut} $C$ (of $T(X)$ or of $U_\iy(X)$) is an open arc in $\C\sm T(X)$ whose closure
is a closed arc with its endpoints in $T(X)$.  If $C$ is a crosscut, then by
the \emph{shadow of} $C$, denoted $\sh(C)$, we mean the bounded component of
$\Sph\sm [T(X)\cup C]$. Sometimes the crosscut which gives rise to a shadow is
said to be the \emph{gate} of the shadow. In the uniformization plane we
consider $\ol{\disk}$ as a marked continuum which allows us to talk about
crosscuts of $\idisk$ too. Moreover, given a crosscut in $\idisk$ we can then
talk about its shadow etc. It is known that if $C$ is a crosscut of $T(X)$,
then $\vp^{-1}(C)$ is a crosscut of $\bd\idisk$, and
$\vp^{-1}(\sh(C))=\sh(\vp^{-1}(C))$.

We say that a crosscut $C$ is an
$R_\al$-\emph{essential crosscut} if $R_\al\cap C$ is a single point, called
the \emph{central point}, and the intersection of $C$ and $R_\al$ is
transverse. A sequence of crosscuts $\{C_i\}$ of $T(X)$ is a \emph{fundamental
chain} provided $C_{i+1}\subset \sh(C_i)$, $\ol{C_{i+1}}\cap \ol{C_i}=\0$ for
each $i$, and $\lim\dia(C_i)=0$. Two fundamental chains $Q=\{q_n\}$ and
$Q'=\{q'_n\}$ are said to be \emph{equivalent} if $\sh(q_n)$ contains all but
finitely many crosscuts $q'_n$, and $\sh(q'_n)$ contains all but finitely many
crosscuts $q_n$. A \emph{prime end} of $U_\iy(X)$ is an equivalence class of
fundamental chains; a fundamental chain is said to \emph{belong} to its prime end.

Given a fundamental chain $\{C_i\}$, the set $\lim \vp^{-1}(C_i)$ is a point
$e^{2\pi i \al}\in \bd \idisk, \al\in[0,1)$; the corresponding prime end then
may be identified with the angle $\al$. Given a prime end $\al$ and a
corresponding fundamental chain $\{C_i\}$, denote by $\imp(\al)$, called the
\emph{impression} of $\al$, the set $\cap \ol{\sh(C_i)}$; it is known that
$\imp(\al)$ does not depend on the choice of a fundamental chain and therefore
is well-defined. Also, consider the set $\Pi(\al)=\ol{R_\al}\sm R_\al$, called
the \emph{principal set} of $R_\al$ (or just of $\al$). It is known that
$\Pi(\al)\subset \imp(\al)$ and that for each point $x\in \Pi(\al)$ there
exists a fundamental chain $C_i$ of the prime end $\al$ such that $C_i\to x$.

The last claim can be improved a little. It was shown in \cite{bo06} that given
an angle $\al$, there exists for each $z\in\R_\al$ an $R_\al$-essential
crosscut $C_z$ such that $\lim\dia\, C_z=0$ as $z\to X$. We call such a family
$C_z$ a \emph{defining family of crosscuts} of the prime end $\al$. For
convenience we order each $R_\al$ so that $x<_\al y$ if and only if the subarc
of $R_\al$ from $y$ to $\infty$ is contained in the subarc of $R_\al$ from $x$
to $\infty$ (thus, as the points move along $R_\al$ from infinity towards $X$,
they decrease in the sense of the order on $R_\al$). Denote by $(a, b)_\al$ the
set of points in $R_\al$ enclosed between the points $a, b\in R_\al$. Also, set
$(0, a)_\al=\{x\in R_\al: x<_\al a\}$. Similarly we define semi-open and closed
subsegments of $R_\al$ when possible (e.g., if $R_\al$ lands, it makes sense to
talk of $[0, a]_\al$ however otherwise the set $[0, a]_\al$ is not defined).
Also, similarly we define relations $\le_\al, >_\al$ and $\ge_\al$. By a
\emph{tail} of $R_\al$ we mean the set of all points $y\in R_\al$ such that
$y<_\al z$ (or $y\le_\al z$) for some $z\in R_\al$.

It is well known that the geometry of the ray $R_\be$ in (A2) and the continuum $X$ is
quite complicated. The ray approaches $X$ so that on either side of $R_\be$ the
distance to $X$ goes to $0$ while it simultaneously accumulates upon the entire
$X$. It then follow from properties of conformal maps that round balls, disjoint
from $X$ but non-disjoint from $R_\be$, with points of the intersection with
$R_\be$ approaching $X$ must go to $0$ in diameter. One can say that $R_\be$
``digs a dense channel'' in the plane eventually accumulating on $X$ by (A2).

As was explained in the Introduction, the main remaining question concerning fixed
point problem for branched covering maps is that dealing with negatively
oriented maps. Since we are interested in maps $f$ such that $|\degree(f)|\le
2$ and thanks to a theorem of Bell \cite{bell78} we may make the following
assumption.

\begin{enumerate}

\item[(A5)] From now on we assume that $f$ is of degree $-2$.

\end{enumerate}

Then $f$ has a unique \emph{critical point}, denoted by $c$, and a
unique \emph{critical value}, denoted by $v=f(c)$. Let $\tau:\C\to\C$ be the
involution defined by $\tau(c)=c$ and if $x\ne c$, $\tau(x)=x'$ where
$\{x'\}=f^{-1}(f(x))\sm \{x\}$. Clearly, $\tau^2=id$ (i.e., the map $\tau$ is
an idempotent homeomorphism of the plane); sometimes we call $\tau(z)$ the
\emph{sibling} of $z$. Let us establish basic properties of $f$ in the
following lemma (some of the properties hold in more general situations,
however we do not need such generality in this paper).

\begin{lemma}\label{pull} The following facts hold.

\begin{enumerate}

\item If $Z$ is a continuum then $f(\Int(T(Z)))\subset \Int(T(f(Z))$.

\item Suppose that $K$ is a non-separating continuum. If $v\nin K$ then there are
exactly two pullbacks of $K$ which are  disjoint and map onto $K$
homeomorphically (on their neighborhoods). If $v\in K$ then $f^{-1}(K)$ is the
unique pullback of $K$ which must contain $c$.

\item If $Y$ is a continuum and $C$ is a pullback of $Y$ then
$T(C)$ is a pullback of $T(Y)$ (and hence $f(T(C))=T(Y)$). In particular, a
pullback of a non-separating continuum is non-separating.

%\item If $I$ is a continuum such that $f|_I$
%is not a homeomorphism then the pullback $J$ of $f(I)$ containing $I$ is such
%that $T(J)$ contains $c$ and $v\in f(T(J))=T(f(I))$.

\item Suppose that $U$ is a simply connected domain such that $f(U)$ is
    also simply connected. If $f|_U$ is not a homeomorphism then $U$ must
    contain a critical point.

\end{enumerate}

\end{lemma}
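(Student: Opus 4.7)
For (1), I plan to combine the oriented property $f(T(Z))\subset T(f(Z))$ from Remark~\ref{simplor} with the openness of $f$ (a branched covering of the plane is an open map, since away from critical points it is a local homeomorphism and at each critical point $c$ it behaves like $z\mapsto z^k$). For $z\in\Int(T(Z))$, choose an open neighborhood $N\subset\Int(T(Z))$ of $z$; then $f(N)$ is open in $\C$ and $f(N)\subset f(T(Z))\subset T(f(Z))$, so $f(z)\in\Int(T(f(Z)))$.

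For (2), first suppose $v\nin K$. Because $K$ is a non-separating plane continuum, it admits a basis of simply connected Jordan-domain neighborhoods; choose one such $U$ with $\ol U\cap\{v\}=\0$. Then $f\colon f^{-1}(U)\to U$ is an unbranched degree-two cover over a simply connected base, and hence splits as $U_1\sqcup U_2$ with $f|_{U_i}$ a homeomorphism onto $U$; the sets $U_i\cap f^{-1}(K)$ are the two desired pullbacks, each homeomorphic to $K$. If instead $v\in K$, then by confluence each component of $f^{-1}(K)$ must contain a point of $f^{-1}(v)=\{c\}$; two disjoint components cannot share $c$, so $f^{-1}(K)$ is connected.

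For (3), let $\hat C$ be the component of $f^{-1}(T(Y))$ containing $C$. By confluence $f(\hat C)=T(Y)$, and orientation gives $f(T(C))\subset T(Y)$, so $T(C)\subset\hat C$. For the reverse inclusion, I first need to verify, by a case analysis on whether $v\in T(Y)$ (using (2) for both $Y$ and $T(Y)$, with a monodromy argument in the subcase $v\in T(Y)\sm Y$ to show $f^{-1}(Y)$ is connected because $Y$ surrounds a region containing $v$), that $\hat C$ contains a unique component of $f^{-1}(Y)$, namely $C$. Consequently $\hat C\sm C=\hat C\cap f^{-1}(T(Y)\sm Y)$, and each component $W$ of this set is in fact a component of the open set $f^{-1}(T(Y)\sm Y)$ (any such component meeting $\hat C$ is connected inside $f^{-1}(T(Y))$, hence contained in $\hat C$); in particular $W$ is open in $\Sph$ and bounded. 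If a path from $W$ to $\infty$ avoided $C$, its last point $y$ in $\hat C$ would lie in $\bd\hat C$ and also in some component $W'$ of $\hat C\sm C$; openness of $W'$ in $\Sph$ would then place a neighborhood of $y$ inside $\hat C$, contradicting $y\in\bd\hat C$. Hence $W\subset T(C)$, giving $\hat C\subset T(C)$ and $T(C)=\hat C$. This is the main obstacle: handling the case analysis that makes $\hat C$ harbor a unique pullback of $Y$, and then converting the planar position of $\hat C\sm C$ into membership in $T(C)$.

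For (4), I argue contrapositively. Suppose $c\nin U$; then $v\nin f(U)$, since $f^{-1}(v)=\{c\}$. Hence $f$ restricts to an unbranched double cover $\C\sm\{c\}\to\C\sm\{v\}$ over $f(U)$. If $f|_U$ failed to be a homeomorphism, pick $a\ne b$ in $U$ with $f(a)=f(b)$ and a path $\ga\subset U$ from $a$ to $b$. The loop $f\circ\ga$ is null-homotopic in the simply connected $f(U)$, and lifting this null-homotopy through the unbranched cover yields a continuous family of endpoints in the two-point fibre $f^{-1}(f(a))=\{a,\tau(a)\}$ moving from $b$ to $a$, which is impossible since this fibre is discrete.
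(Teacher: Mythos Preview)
Your proposal is correct. Parts (1) and (2) match the paper's argument essentially verbatim (for (2) the paper cuts the plane with an arc from $v$ to $\infty$ rather than citing covering-space theory over a simply connected neighborhood, but the content is the same).

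Parts (3) and (4) take different routes from the paper. For (4), the paper bootstraps from (2) and (3): an arc $I\subset U$ joining $x\ne y$ with $f(x)=f(y)$ has $f^{-1}(f(I))$ connected (both preimages of $z$ lie on $I$), whence by (2)--(3) the hull $T(f(I))$ contains $v$; since $T(f(I))\subset f(U)$ this places $c\in U$. Your homotopy-lifting argument is the standard covering-space alternative and is slightly more self-contained. For (3), in the key case $v\in T(Y)$ the paper sets $Z=f^{-1}(T(Y))$ and works with the \emph{outer boundary} $Z'=\bd U_\infty(Z)$: it shows directly that $Z'=f^{-1}(\bd U_\infty(T(Y)))$, and since $Z'$ is a continuum contained in $f^{-1}(Y)$ that every pullback of $Y$ must intersect, $f^{-1}(Y)$ is connected and $T(C)=Z$ follows at once. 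Your route via a path-to-infinity argument for the components of $\hat C\sm C$ also works, but the monodromy step in the subcase $v\in T(Y)\sm Y$ deserves care for a general continuum $Y$ (no loops in $Y$ need exist): the clean way to cash it out is to observe that if $V$ is the bounded complementary domain of $Y$ containing $v$, then $f^{-1}(V)$ is simply connected, so $\bd f^{-1}(V)=f^{-1}(\bd V)$ is a connected subset of $f^{-1}(Y)$ that both hypothetical pullbacks must meet. The paper's outer-boundary device handles the subcases $v\in Y$ and $v\in T(Y)\sm Y$ uniformly and sidesteps this detour.
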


\begin{proof}
(1) Suppose otherwise. Then there is a point $x\in \Int(T(Z))$ such that
$f(x)\nin \Int(T(f(Z)))$. By Remark~\ref{simplor} $f(x)\in T(f(Z))$. Since $f$
is open, we can then find a point $y\in \Int(T(Z))$ such that $f(y)$ is outside
$T(f(Z))$ contradicting Remark~\ref{simplor}.

(2) If $v\nin K$ we can take a curve $Q$ from $v$ to infinity disjoint from
$K$. Then $f^{-1}(Q)$ is a curve which cuts $\C$ into two open half-planes and
is disjoint from $f^{-1}(K)$. Also, each half-plane maps onto $\C\sm K$
homeomorphically. Thus, in this case $f^{-1}(K)$ consists of two components
each of which maps onto $K$ homeomorphically (on sufficiently small
neighborhoods of the pullbacks). Suppose that $v\in K$. Then $f^{-1}(K)$ cannot
have more than one component because $f$ is confluent (hence each pullback of
$K$ maps onto $K$) and $v$ has a unique preimage $c$.

(3) Let us apply (1) to $T(Y)$. If $v\nin T(Y)$ then $C$ must be a homeomorphic
pullback of $Y$ which implies the desired. Let $v\in T(Y)$ and set
$Z=f^{-1}(T(Y))$ where by (1) $Z$ is the unique pullback of $T(Y)$. Let us show
that then $C$ is the unique pullback of $Y$. Let $Y'$ be the boundary of
$U_\iy(T(Y))$, $Z'$ be the boundary of $U_\iy(Z)$. It follows that
$f(Z')\subset Y'\subset Y$. On the other hand, by (1) no point of $\Int(Z)$ can
map to a point of $Y'$, and by the construction no point from $\C\sm Z$ can map
to a point of $Y'$. Hence $Z'=f^{-1}(Y')$.
%and $Z'\subset C$

This implies that $f^{-1}(Y)$ is connected. Indeed, suppose otherwise. Then
there are two pullbacks $Y_1, Y_2$ of $Y$ each of which maps onto $Y$. This
implies that $Z'=(Y_1\cap Z') \cup (Y_2\cap Z')$ which contradicts the fact
that $Z'$ is a continuum. Thus, $C=f^{-1}(Y)$. Moreover, $Z'\subset C$ and
hence $T(Z')=Z\subset T(C)$. On the other hand, by Remark~\ref{simplor},
$f(T(C))\subset T(Y)$ and so $T(C)\subset Z$. Hence $T(C)=Z$. If $Y$ is
non-separating, then $T(Y)=Y$. By the above $T(C)$ is a pullback of $T(Y)=Y$
containing $C$, that is $C$. Thus, $C$ is non-separating as desired.

(4) It immediately follows from (2) that $c\in \ol{U}$ and $v\in f(\ol{U})$.
However we need to show that $c\in U$. Take $x, y\in U$ such that $f(x)=f(y)=z$
and connect them with an arc $I\subset U$. Then $f(I)\subset f(U)=V$. Since $V$
is simply connected by the assumption of the lemma, $T(f(I))\subset V$. Since
$f(x)=f(y)$, it follows
%from (2) and (3)
that $J=f^{-1}(f(I))$ is the unique pullback of $f(I)$ (because $f$ is
confluent and both preimages of $z$ belong to $I$). By (2) and (3), $T(J)$ is
the unique pullback of $T(f(I))$, and $v\in T(f(I))$. Since $T(f(I))\subset
V=f(U)$, the unique preimage $c$ of $v$ belongs to $U$ as desired.
\end{proof}

Suppose that $v\nin T(X)$. Then by Lemma~\ref{pull} it follows that there
exists a neighborhood $U$ of $T(X)$ on which $f$ is a homeomorphism. Again, by
the Bell's results \cite{bell78} this implies the existence of an $f$-fixed
point $x\in T(X)$. (Alternatively, the proof given below in Section~\ref{main-section} can
easily be adapted to cover this case.) Therefore we extend our standing
assumption as follows.

\begin{enumerate}

\item[(A6)] From now on we assume that $v\in T(X)$.

\end{enumerate}

Since $f$ is oriented and $f(X)=X$, $f(T(X))\subset T(f(X))=T(X)$. By
Lemma~\ref{pull} the continuum $\hX=f^{-1}(T(X))\supset T(X)$ is non-separating
and maps onto $T(X)$ in a two-to-one fashion (except for the point $c$). Let us
list simple consequences of our standing assumption as applies to $X$ in this
case.

\begin{lemma}\label{simplex} The set $T(X)$ is not fully invariant,
$\tau(X)\not\subset T(X)$, and $X\cap \tau(X)\ne \0$.
\end{lemma}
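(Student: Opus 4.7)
The three conclusions can all be reduced to a single structural identity and a single connectedness fact. The identity is
\[
f^{-1}(X) \;=\; X \cup \tau(X) \;=\; \bd\, U_\iy(\hX),
\]
where the first equality is immediate from $\dg(f)=-2$ and $f\circ\tau = f$ together with $f(X)=X$, and the second is what the proof of Lemma~\ref{pull}(3) actually establishes when applied to $Y=X$ (there the authors show that $\bd U_\iy(Z) = f^{-1}(Y')$ with $Y' = \bd U_\iy(T(Y))$; for $Y=X$ we have $Y'=X$ since $X$ is unshielded, and $Z=\hX$). The connectedness fact is that $\bd U_\iy(\hX)$ is a continuum, since $\hX$ is non-separating and so $U_\iy(\hX)\cup\{\iy\}$ is a simply connected open subset of $\Sph$, whose boundary is automatically connected.

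\emph{Proof of} $T(X)$ \emph{not fully invariant.} I would argue by contradiction: if $\hX = f^{-1}(T(X)) = T(X)$, then $\bd U_\iy(\hX) = \bd U_\iy(T(X)) = X$, and the displayed identity gives $f^{-1}(X)=X$, contradicting standing assumption (A4).

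\emph{Proof of} $\tau(X)\not\subset T(X)$. I would split into two subcases. First, if $\tau(X)\subset X$, then applying $\tau$ yields $X=\tau(\tau(X))\subset\tau(X)$, so $\tau(X)=X$, giving again $f^{-1}(X) = X\cup\tau(X)=X$ and contradicting (A4). Second, if $\tau(X)\subset T(X)$ but some $z\in\tau(X)\sm X$ exists, then $z$ lies in a bounded component $U$ of $\C\sm X$. Now $f$ is open (Lemma~\ref{non-deg}), so $f(U)$ is open and connected; and $f$ is oriented, so by Remark~\ref{simplor} $f(U)\subset f(T(X))\subset T(X)$. Since $X=\bd T(X)$, any open connected subset of $T(X)$ meeting $X$ would also meet $\C\sm T(X)$; hence $f(U)\cap X=\0$. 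But $z=\tau(w)$ with $w\in X$, so $f(z)=f(w)\in X$ while $f(z)\in f(U)$, contradicting $f(U)\cap X=\0$.

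\emph{Proof of} $X\cap\tau(X)\ne\0$. Suppose for contradiction that $X\cap\tau(X)=\0$. Then by Lemma~\ref{pull}(2) the critical value $v$ cannot lie in $X$ (otherwise $f^{-1}(X)$ would be a single continuum containing $c=\tau(c)$ and hence forcing $X$ and $\tau(X)$ to share $c$). Hence $X\cup\tau(X)$ is the disjoint union of two disjoint continua, and is in particular disconnected. This contradicts the second ingredient above, namely that $\bd U_\iy(\hX)=X\cup\tau(X)$ is a continuum.

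The main potential obstacle is verifying the identification $\bd U_\iy(\hX)=f^{-1}(X)$, but this is exactly what was already done inside the proof of Lemma~\ref{pull}(3); the remaining ingredients (openness, orientation, connectedness of the outer boundary of a non-separating plane continuum) are all either standing assumptions or standard facts from Carath\'eodory theory as recalled in Section~\ref{baseprel}.
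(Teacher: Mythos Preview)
Your proof is correct. The route differs from the paper's in organization and, for the third claim, in substance.

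The paper argues each claim more directly: for $T(X)$ not fully invariant it uses Lemma~\ref{pull}(1) (interior points of $T(X)$ cannot map to the unshielded $X$); for $\tau(X)\not\subset T(X)$ it observes in one line that $\tau(X)=\bd\,\tau(T(X))$, so if $\tau(T(X))$ has points outside $T(X)$ then so does its boundary $\tau(X)$; and for $X\cap\tau(X)\ne\0$ it argues that disjointness would make $T(X)$ and $\tau(T(X))$ disjoint, hence $f|_{T(X)}$ a homeomorphism with $f(T(X))=T(X)$, forcing $c\in T(X)$ by (A6) and contradicting that disjointness. Your unifying identity $f^{-1}(X)=X\cup\tau(X)=\bd\,U_\iy(\hX)$ (extracted from inside the proof of Lemma~\ref{pull}(3)) is a nice device; it makes part (1) immediate and gives a genuinely different proof of part (3) via unicoherence of $\Sph$ (connectedness of $\bd\,U_\iy(\hX)$) rather than via the assumption $v\in T(X)$. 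Your case analysis for part (2) is correct but longer than the paper's one-liner. Two small remarks: in your second subcase of part (2), the sentence ``any open connected subset of $T(X)$ meeting $X$ would also meet $\C\sm T(X)$'' really says that an open-in-$\C$ subset of $T(X)$ cannot meet $\bd T(X)=X$, i.e.\ $f(U)\subset\Int(T(X))$; and in part (3) your parenthetical appeal to Lemma~\ref{pull}(2) is unnecessary (and that lemma is stated for non-separating $K$, whereas $X$ may separate) --- the bare observation $c=\tau(c)\in f^{-1}(v)\subset X\cup\tau(X)$ already forces $c\in X\cap\tau(X)$.
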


\begin{proof} Let us show that $T(X)$ is not fully invariant. By
Lemma~\ref{pull} no point from the interior of $T(X)$ can map to $X$ (recall
that $X$ is unshielded by (A2) and hence no point of $X$ belongs to
$\Int(T(X))$. Hence if $T(X)$ is fully invariant then so is $X$ contradicting
(A4). This implies that there are points of $\tau(T(X))$ outside $T(X)$, and
hence $\tau(X)=\bd \tau(T(X))$ cannot be contained in $T(X)$.

Finally, suppose that $X\cap \tau(X)=\0$. Since there are points of $\tau(X)$
outside $T(X)$, this implies that $\tau(T(X))=T(\tau(X))$ is disjoint from
$T(X)$. Hence $f|_{T(X)}$ is a homeomorphism and so $f(T(X))=T(X)$. However
then by (A6) we have $c\in T(X)$, a contradiction with $T(X)$ and $\tau(T(X))$
being disjoint. We conclude that $X\cap \tau(X)\ne \0$ as desired.
\end{proof}

For convenience let us make the conclusions of Lemma~\ref{simplex} a part of
our standing assumption.

\begin{enumerate}

\item[(A7)] $T(X)$ is not fully invariant,
$\tau(X)\not\subset T(X)$, and $X\cap \tau(X)\ne \0$.

\end{enumerate}

%We are interested in fixed points of the map $f$. Note
%that  $\dg(f)\ne0\ne\dg(F)$. If $\dg(f)=\pm 1$, then $f$ is a homeomorphism and
%by \cite{cartlitt51,bell78} $f$ has a fixed point in $T(X)$. If $\deg(f)>1$ it
%was shown in \cite{fokkmayeovertymc07} that $f$ also has a fixed point in
%$T(X)$. We will consider the case $\dg(f)=-2$.

%Let $X$ be an indecomposable continuum which is invariant under the branched
%covering map $f:\C\to\C$ of degree $-2$ and with unique critical point $c$.

A \emph{composant} of $x$ in a continuum $Y$ is the union of all proper
subcontinua of $Y$ which contain $x$. If $Y$ is indecomposable then any two
composants of $Y$ are either equal or disjoint; clearly, if $g(Y)\subset Y$ for
some continuous map $g$, then the image of a composant, being a connected set,
either coincides with $Y$, or is contained in a composant of $Y$. It follows
from the definition that if $Z$ is a composant of $Y$ then for each $p,q\in Y$,
there exists a subcontinuum $P\subset Y$ such that $p,q\in P$. It is well known
\cite{kras74} that, if $Y$ is indecomposable, then each composant in a dense
first category $F_\sigma$ subset of $Y$. By the Baire Category Theorem there are
uncountably many distinct composants in an indecomposable continuum.

Again, assume that $Y$  is indecomposable. A composant $Z$ of $Y$ is
\emph{internal} if every continuum $L\subset\C$ which meets $\C\sm Y$ and $Z$,
intersects \emph{all} composants of $Y$. Equivalently, a composant $Z$ of $Y$
is internal if and only if  every continuum $L\subset\C$ which meets $\C\sm Z$ and
$Z$, intersects all composants of $Y$ (indeed, if $C\subset Y$ meets $Z$ and
$Y\sm Z$ then $C$ must coincide with $Y$). A composant which is not internal is
called \emph{external}.  We denote the union of all external composants by
$E^*_Y$. Clearly, an internal composant $Z$ does not contain accessible points:
if $z\in Z$ is an accessible (from $\C\sm Y$) point, then we can choose an arc
in the appropriate component of $\C\sm Y$ with an endpoint at $z$ which
intersects $Y$ only at $z$, a contradiction with $Z$ being internal. Thus, an
external composant is a generalization of a composant containing an accessible
(from $\C\sm Y$) point. The following results are due to Krasinkiewicz
\cite{kras74}.

\begin{lemma}[Krasinkiewicz]\label{kras}
Let $Y$ be an indecomposable continuum in the plane. Then the following claims hold.

\begin{enumerate}

\item\label{FS} The set $E^*_Y$ is a first category $F_\sigma$-subset of
    $Y$; hence, the union of all internal composants is a dense
    $G_\delta$-set in $Y$.

\item\label{cut}
Let $C$ be an internal composant of $Y$. If $L$ is any continuum which meets
$C$, the complement of $Y$ and does not contain $Y$, then there exists a
neighborhood $U$ of $L$ and a continuum $Z\subset C$ which separates $U$
between two distinct points of $L$.

\end{enumerate}

\end{lemma}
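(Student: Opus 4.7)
The plan is to derive both claims along the lines of Krasinkiewicz's original arguments in \cite{kras74}. The starting point for claim (1) is the classical observation that in an indecomposable continuum $Y$, every proper subcontinuum has empty interior; consequently every composant, being a union of proper subcontinua, is a first category subset of $Y$, and can be shown to be $F_\sigma$ by an exhaustion using subcontinua of bounded diameter passing through a fixed base point. It therefore suffices to prove that $E^*_Y$ is the union of at most countably many composants, for then $E^*_Y$ is a countable union of first category $F_\sigma$-sets and its complement is a dense $G_\delta$ by the Baire category theorem.

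For that countability bound, my plan is to associate to each external composant $Z$ a witnessing subarc $A_Z\subset \C\setminus Y$ whose closure meets $Z$ but misses at least one other composant of $Y$. Such an arc is extracted from any continuum $L$ demonstrating externality of $Z$ by taking a suitable component of $L\setminus Y$ accumulating on $Z$. The heart of the argument is then to show that at most countably many essentially distinct $A_Z$'s can coexist in $\C\setminus Y$: approaching $Y$, each $A_Z$ determines a prime end of some complementary component of $Y$ whose principal set lies in $Z$, distinct external composants give distinct such prime ends, and standard Carath\'eodory theory, combined with planarity (each such arc contains a point of a fixed countable dense subset of $\C$), bounds the count.

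For claim (2), fix an internal composant $C$ and a continuum $L$ meeting $C$ and $\C\setminus Y$ with $L\not\supset Y$. Pick $y_0\in Y\setminus L$ and choose a small connected open neighborhood $U$ of $L$ with $y_0\notin\ol{U}$. Select $p\in L\cap C$ and $q\in L\setminus Y$; these are the two distinct points of $L$ to be separated. The separating continuum $Z\subset C$ is to be constructed as a proper subcontinuum of $Y$ lying in $C$, chosen to cut across $U$ between a small neighborhood of $p$ (inside $C$) and $q$. If no such $Z$ separated $U$ between these points, then one could find a continuum $M\subset U\setminus Z$ joining a point of $C$ near $p$ to $q$; this $M$ would meet both $C$ and $\C\setminus Y$, so by internality of $C$ it would have to meet every composant of $Y$, in particular the composant containing $y_0$, contradicting $M\subset U$ and $y_0\notin \ol{U}$.

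The main obstacle is the countability step for claim (1): it is the place where planarity is used essentially (the conclusion is known to fail in higher-dimensional ambient spaces), and the association of external composants with distinct accessible prime ends must be carried out carefully to avoid double-counting. The construction of the separating subcontinuum $Z$ in claim (2) is also delicate because one must guarantee that $Z$ lies inside $C$ and not merely inside $Y$. For these reasons, in the actual writeup I would defer to the detailed arguments of \cite{kras74} and merely recall the statements as they are used.
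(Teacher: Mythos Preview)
The paper does not prove this lemma at all: it is stated with attribution to Krasinkiewicz and the citation \cite{kras74}, and the text immediately moves on to use it. So there is no ``paper's own proof'' to compare against, and your final decision to defer to \cite{kras74} and merely quote the statement is precisely what the authors do.

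That said, a word on your sketch. For part~(1), the strategy of reducing to a countability bound on external composants is indeed the core of Krasinkiewicz's argument, and your idea of attaching to each external composant an arc in $\C\setminus Y$ witnessing externality is the right instinct; the delicate point (which you flag) is turning this into an injection into a countable set, and that really does require the planar prime-end machinery rather than just ``a point of a fixed countable dense subset of $\C$'', since many arcs can share such a point. For part~(2) your outline has a genuine gap: you never say how the separating continuum $Z\subset C$ is produced. The contradiction you set up presupposes a specific $Z$ already chosen, so the argument as written is circular --- you need an actual construction (in \cite{kras74} this comes from approximating $L$ by chains of subcontinua of $C$ using density of $C$ in $Y$ and the internality hypothesis). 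Since you are deferring to the reference anyway, none of this affects the writeup, but if you ever need to reproduce the proof in detail these are the places requiring work.
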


A plane continuum is called \emph{tree-like} if it is one-dimensional (has no
interior) and non-separating. Recall that $X$ is an invariant continuum which
is minimal among continua with respect to the property that $f(X)\subset T(X)$.
The set $X$ has a number of properties listed in the beginning of this section,
in particular it is indecomposable (and hence one-dimensional) and unshielded.
We now study other properties of $X$. First we need a few technical lemmas.

\begin{lemma}\label{fsigde} Suppose that $A\subset X$ is a dense $G_\da$-subset
of $X$. Then $f(A)$ is not a first category subset of $X$.
\end{lemma}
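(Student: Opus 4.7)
The plan is to argue by contradiction. Assume $f(A)$ is first category in $X$, so $f(A)\subset\bigcup_n F_n$ with each $F_n$ closed and nowhere dense in $X$. Then $A\subset\bigcup_n B_n$, where $B_n:=f^{-1}(F_n)\cap X$ is closed in $X$. Since $X$ is a compact metric space it is Baire; as $A$ is a dense $G_\da$ it is comeager, hence non-meager. Thus some $B_{n_0}$ fails to be nowhere dense and, being closed, has non-empty interior in $X$. Set $F:=F_{n_0}$ and pick a non-empty relatively open $U\subset X$ with $f(U)\subset F$; the goal is to contradict the nowhere-density of $F$.

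Write $U=V\cap X$ for some open $V\subset\C$. The key step is to find $x_0\in U$ with $x_0\ne c$ and $\tau(x_0)\nin X$ (equivalently, $x_0\nin\tau(X)$). Given such $x_0$, since $\tau(X)$ is closed and $x_0\nin\tau(X)$, continuity lets me shrink $V$ around $x_0$ so that $V\cap\tau(X)=\0$; a further shrinking arranges $c\nin V$, whereupon $f|_V$ is a homeomorphism onto the open set $f(V)$ by (A5). Then for any $y\in f(V)\cap X$, let $x\in V$ be the unique preimage of $y$ in $V$; its sibling $\tau(x)$ lies in $\tau(V)$, hence outside $X$ (since $V\cap\tau(X)=\0$ is equivalent to $\tau(V)\cap X=\0$). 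Since $y\in X=f(X)$ must have some preimage in $X$, that preimage is $x$, so $x\in V\cap X=U$ and $y\in f(U)\subset F$. Therefore $f(V)\cap X\subset F$; but $f(V)\cap X$ is a non-empty relatively open subset of $X$ (it contains $f(U)$), contradicting that $F$ is nowhere dense in $X$.

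It remains to secure the existence of $x_0$. Since $X$ is a non-degenerate continuum and $\{c\}$ is a singleton, this reduces to showing $U\not\subset\tau(X)$, or equivalently that $X\cap\tau(X)$ has empty interior in $X$. This empty-interior assertion is a weaker form of the first-category statement for $X\cap\tau(X)$ that is announced as the first main step of the main argument; because that later lemma may itself depend on the present one, the empty-interior property must here be obtained by independent means. By (A7), $\tau(X)\not\subset T(X)$, which combined with $X\subset T(X)$ forces $X\ne\tau(X)$, so $\tau(X)\sm X$ is non-empty. Combining this with the unshieldedness of both $X$ and $\tau(X)$ and the fact that $\tau$ is an involutive homeomorphism of $\hc$ fixing $\iy$ (so it interchanges the exterior domains $U_\iy(X)$ and $U_\iy(\tau(X))$), the plan is to argue that a relatively open subset of $X$ contained in $\tau(X)$ would force a local identification of the two distinct unshielded continua that is inconsistent with $\tau(X)$ having points outside $T(X)$. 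Verifying this geometric incompatibility is the central technical step I expect; once it is in hand, the local argument in the preceding paragraph completes the proof.
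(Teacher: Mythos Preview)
Your reduction to the Baire category theorem is sound up through the point where you obtain a relatively open $U\subset X$ with $f(U)\subset F$. The genuine gap is exactly where you flag it: you need $U\not\subset\tau(X)$, and your proposed independent route to ``$X\cap\tau(X)$ has empty interior in $X$'' is not a proof but a hope. The sketch invoking unshieldedness and the involution $\tau$ does not, as stated, yield any contradiction; a relatively open piece of one unshielded continuum sitting inside another is not obviously obstructed by the existence of points of $\tau(X)$ outside $T(X)$. In fact the empty-interior statement is precisely the content of Lemma~\ref{emptyint}, whose proof in the paper is substantial (it goes through internal composants, Krasinkiewicz's theorem, and Lemmas~\ref{all1} and~\ref{intint}) and does depend on the present lemma. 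So you have correctly diagnosed the circularity but not broken it.

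The paper's proof sidesteps this difficulty by reversing the order of operations. Rather than first running the contradiction to produce a \emph{specific} open $U$ and then needing that particular $U$ to meet $X\sm\tau(X)$, the paper first chooses \emph{any} point $x\in X\sm\tau(X)$---whose existence needs only $X\sm\tau(X)\ne\0$, an immediate consequence of (A7) via the involution---and a small neighborhood $U$ of $x$ with $\tau(U)\cap X=\0$. On that $U$ one has $f(U\cap X)=f(U)\cap X$ and $f|_U$ a homeomorphism, so $f$ carries the dense $G_\da$ set $A\cap\ol U$ to a dense $G_\da$ subset of the nonempty open set $f(U)\cap X$, which is therefore non-meager in $X$. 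No information about the size of $X\cap\tau(X)$ beyond its being a proper subset is used. The moral: choose the good neighborhood before, not after, invoking Baire.
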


\begin{proof} First let us show that there exists a point $x\in X$ and a small
neighborhood $U$ of $x$ such that $f|_U$ is a homeomorphism onto its image and
$f(X\cap U)=f(U)\cap X$. Indeed, it is obvious if $X$ is fully invariant.
Otherwise choose a point $x\in X$ so that $\tau(x)\nin X$ (i.e., $f(x)$ has a
unique preimage in $X$, namely $x$, and $x$ is not critical). If now $U$ is a
sufficiently small neighborhood of $x$, then $\tau(U)\cap X=\0$; hence
$f(U)\cap X$ consists of points of $X$ which cannot have preimages in
$\tau(U)\cap X=\0$, but must have some preimages because $f(X)=X$. The only
preimages points of $f(U)\cap X$  are those in $U\cap X$ which implies
that $f(X\cap U)=f(U)\cap X$ as desired.

Now, by the conditions of the lemma $A\cap (X\cap \ol{U})$ is a $G_\da$-subset
of $X\cap \ol{U}$, hence by the previous paragraph $f(A\cap (X\cap \ol{U}))$ is
a $G_\da$-subset of $X\cap f(U)$. Therefore $f(A)$ cannot be a first category subset
of $X$.
\end{proof}

The next lemma uses Lemma~\ref{fsigde}.

\begin{lemma}\label{all1} Suppose that $D$ is a union of some composants of $X$
which is a first category subset of $X$. Then there exists a composant $T$ of $X$
such that $f(T)\cap D=\0$.
\end{lemma}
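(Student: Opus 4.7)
The plan is to exploit the nice neighborhood $U$ produced in the proof of Lemma~\ref{fsigde} together with a Baire-category argument, the role of $U$ being to rule out the annoying possibility that $f(T)=X$. Because of (A4), that proof supplies a point $x_0\in X$ with $x_0\ne c$ and $\tau(x_0)\nin X$, and then an open neighborhood $U$ of $x_0$ so small that $c\nin U$, $f|_U$ is a homeomorphism onto $f(U)$, $\tau(U)\cap X=\0$, and $f(X\cap U)=X\cap f(U)$; in particular $f|_{X\cap U}$ is a homeomorphism of the nonempty (relatively) open set $X\cap U$ onto the nonempty open set $X\cap f(U)$. Since $c\nin U$, the critical value $v=f(c)$ lies outside $f(U)$, so every $y\in f(U)$ has exactly two $f$-preimages in $\C$: the point $y^\ast:=(f|_U)^{-1}(y)\in U$ and its sibling $\tau(y^\ast)\in \tau(U)$.

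The heart of the argument is the following intermediate claim: no composant of $X$ that meets $X\cap U$ can have image $X$ under $f$. Suppose to the contrary that $T$ is such a composant with $f(T)=X$. Given any $y\in X\cap f(U)$, choose $z\in T$ with $f(z)=y$; then $z\in\{y^\ast,\tau(y^\ast)\}$, and since $\tau(y^\ast)\in\tau(U)$ while $T\subset X$ and $\tau(U)\cap X=\0$, we must have $z=y^\ast$. Consequently $y^\ast\in T$ for every $y\in X\cap f(U)$, which gives $T\supset\{y^\ast:y\in X\cap f(U)\}=X\cap U$. But $X$ is a Baire space, so its nonempty (relatively) open subset $X\cap U$ is not first category in $X$, while every composant of the indecomposable continuum $X$ is first category in $X$ by the well-known fact recalled just before Lemma~\ref{kras}. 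This contradiction proves the claim.

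To finish, note that $X\cap f(U)$ is a nonempty open subset of the Baire space $X$ and that $D$ is first category in $X$, so $(X\cap f(U))\sm D\ne\0$. Pick $y$ in this set, let $x=(f|_U)^{-1}(y)\in X\cap U$, and let $T$ be the composant of $x$ in $X$. By the intermediate claim $f(T)\ne X$, so the connected set $f(T)$ is a proper subcontinuum of $X$, and being connected and containing $f(x)=y$ it lies in the composant $T'$ of $y$; since $y\nin D$ and $D$ is a union of composants of $X$, we have $T'\cap D=\0$, and therefore $f(T)\cap D\subset T'\cap D=\0$, as desired.

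The only nontrivial step is the intermediate claim, and its difficulty is entirely captured by the observation that, thanks to $\tau(U)\cap X=\0$, every preimage of a point of $X\cap f(U)$ that lies in $X$ is actually forced to sit in $X\cap U$; the Baire-category comparison between the open set $X\cap U$ and the first-category composant $T$ then delivers the contradiction, and everything else in the proof is routine.
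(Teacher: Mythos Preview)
Your argument is correct (with one harmless terminological slip: $f(T)$ is not a subcontinuum since composants are not compact; what you really use is that $T$ is an increasing union of proper subcontinua through $x$, each of which maps to a proper subcontinuum through $y$, so $f(T)$ lies in the composant of $y$). Note also that the hypothesis ``meets $X\cap U$'' in your intermediate claim is never used: your argument actually shows the stronger fact that \emph{no} composant of $X$ can map onto $X$, since any such composant would be forced to contain the open set $X\cap U$.

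Your route is genuinely different from the paper's. The paper argues by contradiction: assuming every composant's image meets $D$, it first finds a composant $Q$ with $f(Q)\not\subset D$, deduces $f(Q)=X$, then uses the two-to-one property of $f$ to conclude that at most one other composant can also surject (else their union would be $X$), hence all remaining composants map into $D$; finally Lemma~\ref{fsigde} gives the contradiction, since the union of those composants is a dense $G_\delta$. Your proof is more direct: you extract from the proof of Lemma~\ref{fsigde} the special neighborhood $U$ with $\tau(U)\cap X=\emptyset$, use it to rule out $f(T)=X$ for any composant by a clean Baire argument, and then simply pick a composant whose image lands outside $D$. Your approach never invokes the statement of Lemma~\ref{fsigde} or the two-to-one bound in the form the paper does; the paper's approach, on the other hand, does not need to reopen the proof of Lemma~\ref{fsigde}.
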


\begin{proof} By way of contradiction suppose that for every composant
$T$ the image $f(T)$ of $T$ intersects $D$. Take a composant $Q$ whose image
contains points not from $D$. By the assumption $Q$ also has points mapped into
$D$. Hence $f(Q)$ contains points of a least two distinct composants which
implies that $f(Q)=X$. If there is another composant $R$ such that $f(R)$ is
not contained in $D$, then it again follows that $f(R)=X$. Since $f$ is
two-to-one, this implies that $X=Q\cup R$ which is impossible because there are
countably many pairwise disjoint composants of $X$. Hence for any composant
$R\ne Q$ we have $f(Q)\subset D$. However, the union of all composants except
for $Q$ is a $G_\da$-subset of $X$ while the set $D$ is a first category subset of $X$.
By Lemma~\ref{fsigde} this is impossible.
\end{proof}

The next lemma studies the images of composants.

\begin{lemma}\label{intint} Suppose that $Z$ is an internal composant of $X$.
Then $f(Z)$ is an internal composant of $X$.
\end{lemma}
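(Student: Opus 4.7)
Let $x\in Z$, $y=f(x)$, and let $W$ be the composant of $y$ in $X$. I would prove the lemma in three stages: (i) $f(Z)\ne X$, so $f(Z)\subset W$; (ii) $W\subset f(Z)$; and (iii) $W$ is internal. The starting observation, used throughout, is that $v\in X$: if instead $v\in\Int T(X)\sm X$ then Lemma~\ref{pull}(2) would split $f^{-1}(X)$ into two disjoint homeomorphic pullbacks, forcing $f|_X$ to be a homeomorphism and $X\cap\tau(X)=\0$, contradicting (A7). Consequently $f^{-1}(X)=X\cup\tau(X)$ and $f$ restricts to an unramified double cover on $\C\sm\{c\}$.

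Stages (i) and (iii) both rest on the same lifting construction. Suppose $p\in X$ is accessible from $\C\sm X$ by an arc $\ga\subset(\C\sm X)\cup\{p\}$ ending at $p$. Because $v\in X$, the interior of $\ga$ avoids $v$, so $\ga$ lifts from any preimage $q\in f^{-1}(p)$ to an arc $\tga$ with $f\circ\tga=\ga$; for every parameter $s$ with $\ga(s)\in\C\sm X$ one has $\tga(s)\in f^{-1}(\C\sm X)=\C\sm(X\cup\tau(X))\subset\C\sm X$, so $\tga\cap X=\{q\}$ and $q$ is accessible. For stage (i): if $f(Z)=X$, choose any accessible $p\in X$ (these exist since $X$ is a non-degenerate unshielded continuum; points near the ray $R_\be$ from (A2) provide them) and a preimage $q\in Z$ of $p$ to contradict $Z$ being internal. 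For stage (iii): once $f(Z)=W$ is established, any accessible $p\in W$ and any $q\in Z$ with $f(q)=p$ yield the same contradiction.

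Stage (ii) is the crux. Given $y'\in W$, pick a proper subcontinuum $L\subset X$ with $y,y'\in L$, and let $\tilde L$ be the component of $f^{-1}(L)$ containing $x$; by confluence $f(\tilde L)=L$ and $\tilde L\subset X\cup\tau(X)$. If $\tilde L\subset X$, then $\tilde L$ is a proper subcontinuum of $X$ through $x$, whence $\tilde L\subset Z$ and $y'\in f(\tilde L)\subset f(Z)$. Otherwise $\tilde L$ meets $\C\sm X$ via a point of $\tau(X)\sm X$, meets $Z$ via $x$, and does not contain $X$ (since $f(\tilde L)=L\subsetneq X$); internality of $Z$ forces $\tilde L$ to meet every composant $C$ of $X$. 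For each such $C$, pick $z\in\tilde L\cap C$: then $f(z)\in f(\tilde L)\cap f(C)=L\cap f(C)\subset W\cap f(C)$, so $f(C)\subset W$ or $f(C)=X$. The alternative $f(C)=X$ cannot hold for any internal $C$, by running the stage~(i) lifting argument with $C$ in place of $Z$; hence every internal composant $C$ satisfies $f(C)\subset W$. Since the union $A$ of all internal composants is a dense $G_\da$-subset of $X$ (Lemma~\ref{kras}(1)), Lemma~\ref{fsigde} guarantees $f(A)$ is not first category in $X$, while $f(A)\subset W$ lies inside the first-category composant $W$, a contradiction. Thus $\tilde L\subset X$ after all, proving $W\subset f(Z)$.

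The principal obstacle is stage (ii): orchestrating Krasinkiewicz's internal/external dichotomy with the category-theoretic tools (Lemmas~\ref{fsigde} and~\ref{kras}) to exclude the case where the pullback $\tilde L$ slips out of $X$ into $\tau(X)\sm X$. The other two stages follow cleanly from the lifting construction once the placement $v\in X$ is pinned down.
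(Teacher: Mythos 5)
The decisive flaw is in stage (iii). To conclude that $W=f(Z)$ is internal you only rule out the presence of an accessible point in $W$; but ``external'' is strictly weaker than ``contains an accessible point''. By definition $W$ fails to be internal as soon as \emph{some} continuum $B$ meets $\C\sm X$ and $W$ without meeting all composants of $X$, and such a $B$ need not be an arc meeting $X$ in a single point --- the paper explicitly presents externality as a \emph{generalization} of accessibility. Your arc-lifting construction says nothing about such a $B$, so the case ``$f(Z)$ is a (nonaccessible) external composant'' is simply not excluded. The repair requires the same mechanism you already deploy in stage (ii), and which the paper uses in its case (1): every composant meeting $B$ is external, so the union $D$ of these composants is first category by Lemma~\ref{kras}(\ref{FS}); the pullback $B'$ of $B$ through a preimage in $Z$ of a point of $B\cap f(Z)$ meets $\C\sm X$ and meets $Z$, hence meets \emph{all} composants by internality of $Z$; therefore every composant has image meeting $D$, contradicting Lemma~\ref{all1} (or, in the style of your stage (ii), Lemma~\ref{fsigde}). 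Stages (i) and (ii) are essentially sound and parallel the paper's treatment of ``$f(Z)=X$'' and of its case (2).

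A secondary problem is your opening claim that $v\in X$. Lemma~\ref{pull}(2) applies only to \emph{non-separating} continua, and $X$ may separate the plane (the proof of Lemma~\ref{emptyint} explicitly accommodates separating subcontinua of $X$); if $v$ lies in a bounded complementary domain of $X$, then $f^{-1}(X)=X\cup\tau(X)$ is connected and no contradiction with (A7) arises, so the dichotomy you invoke does not force $v\in X$. Fortunately nothing essential is lost: an access arc can be perturbed inside its complementary domain so as to miss the single point $v$, after which the lift goes through unchanged. But as written the placement of $v$ is asserted rather than proved, and the lifting step should not be made to depend on it.
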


\begin{proof}

By way of contradiction suppose that $f(Z)$ is not an internal composant
of $X$. There are two possibilities this can fail: (1) $f(Z)$ is an external
composant of $X$ or $f(Z)=X$, or (2) $f(Z)$ is contained in a composant with
which it does not coincide. We consider these possibilities separately.

(1) Suppose that $f(Z)$ is either an external composant or the entire $X$. Then
there exists a continuum $B$ such that $B\cap f(Z)\ne\0\ne B\sm X$ and $B\cap
X$ is contained in the union of some, but not all, composants of $X$. Then all
composants which intersect $B$ are external, and hence their union $D$ is a
first category subset of $X$.

Choose a point $z\in B\cap f(Z)$ and then a point $z'\in Z$ such that
$f(z')=z$. Then choose the pullback $B'$ of $B$ which contains $z'$. Since $Z$
is an internal composant and $B'$ meets $\C\sm X$, by definition $B'$
intersects all composants of $X$. Hence all composants of $X$ have points
mapped into the union $D$ of some composants which is a first category subset of
$X$. By Lemma~\ref{all1}, this is impossible.

(2) Suppose that $f(Z)\subsetneqq Y$ where $Y$ is a composant of $X$. Choose a
subcontinuum $E\subset Y$ which contains a point $z\in f(Z)$ and a point of
$y\in Y\sm f(Z)$. Choose the pullback $E'$ of $E$ which contains a point $z'\in
Z$ such that $f(z')=z$. Then $E'\not \subset X$ because otherwise it would be
contained in $Z$ and its image would not contain $y$. Hence $E'$ meets points
of $\C\sm X$ and contains the point $z'\in Z$ which implies that $E'$
intersects all composants of $X$ (because $Z$ is an internal composant). Since
$f(E')=E$, then this implies that all composants of $X$ have points mapped into
$Y$, and $Y$, being a composant of $X$, is a first category subset of $X$.
Again, by Lemma~\ref{all1} this is impossible.
\end{proof}

In what follows we will use the following lemma which studies the set
$\tau(X)\cap X$. In the proof we rely upon the above developed tools.

\begin{lemma}\label{emptyint}
The set $\tau(X)\cap X$ is contained in the union $E^*_X$ of all external
composants of $X$. In particular, $\tau(X)\cap X$ is a proper closed subset of
$X$ with empty interior in $X$.
\end{lemma}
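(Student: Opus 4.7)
The assertion splits into the main containment $\tau(X)\cap X\subseteq E^*_X$ and the auxiliary claim that $\tau(X)\cap X$ is a proper closed subset of $X$ with empty interior in $X$. I would dispose of the auxiliary claim first: closedness is automatic; for properness, note that $\tau(X)\ne X$ by (A7) (since $\tau(X)\not\subset T(X)\supset X$), and then the identity $\tau^2=\mathrm{id}$ turns any hypothetical $X\subset\tau(X)$ into $\tau(X)\subset X$, forcing equality and contradicting the previous sentence, so $X\not\subset\tau(X)$ and hence $\tau(X)\cap X\subsetneq X$. The empty-interior claim will then follow from the main containment together with Lemma~\ref{kras}(\ref{FS}) and the Baire category theorem applied to the continuum $X$.

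For the main containment I argue by contradiction: suppose some $x_0\in\tau(X)\cap X$ lies in an internal composant $Z$ of $X$. The same symmetry argument as above also gives $\tau(X)\not\subset X$, so the continuum $L:=\tau(X)$ meets $\C\sm X$; it also meets $Z$ at $x_0$, and $X\not\subset L$. By the very definition of an internal composant, any such $L$ must intersect every composant of $X$. Hence for every composant $C$ of $X$ one can select $y_C\in C$ with $\tau(y_C)\in X$; the sibling pair $\{y_C,\tau(y_C)\}$ lies entirely in $X$, and the set $B:=f(\tau(X)\cap X)=\{y\in X:f^{-1}(y)\subset X\}$ therefore meets $f(C)$ for every composant $C$. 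In addition $B$ is closed in $X$ (continuous image of a compactum), so $A:=X\sm B$ is open in $X$; moreover $A\ne\0$, for otherwise $f^{-1}(X)\subset X$ would make $X$ fully invariant, contradicting (A4).

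The remaining and, I anticipate, hardest step is to convert this abundance of sibling pairs into a contradiction by combining Lemmas~\ref{fsigde},~\ref{all1}, and~\ref{intint}. The plan is to apply Lemma~\ref{all1} to the first-category union $E^*_X$ of external composants, obtaining a composant $T$ with $f(T)\cap E^*_X=\0$; by connectedness $f(T)$ lies in a single internal composant $D_0$. Using the representative $y_T\in T\cap\tau(X)$ furnished by the preceding paragraph, together with its sibling $\tau(y_T)\in X$, and invoking Lemma~\ref{intint} (images of internal composants are internal composants) against Lemma~\ref{fsigde} (the $f$-image of the dense $G_\delta$ union of internal composants is \emph{not} first category in $X$), I intend to force the set of sibling-paired composants into a configuration that covers $X$ by a first-category union of composants, contradicting Baire. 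The principal obstacle is precisely this combinatorial chase: for each composant $C$ one must track whether $C$ and the composant of $\tau(y_C)$ are both internal, both external, or of mixed type, and then control the $f$-images via Lemma~\ref{intint} and its contrapositive (an external image forces the preimage composant to be external). If this direct route proves too delicate, a fallback is to apply Lemma~\ref{kras}(\ref{cut}) inside $Z$ to the continuum $L=\tau(X)$, producing a separating subcontinuum $Z'\subset Z\subset X$, then take $\tau(Z')\subset\tau(X)$, and project both under $f$ into the internal composant $f(Z)$, deriving a contradiction from two separating continua overfilling an internal composant.
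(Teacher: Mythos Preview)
Your preliminary reductions are fine, and the observation that once $\tau(X)$ touches a single internal composant it must meet \emph{every} composant of $X$ is correct and useful. The proposal stalls, however, exactly where you flag it: the ``combinatorial chase'' through Lemmas~\ref{fsigde}, \ref{all1}, \ref{intint} does not close. Knowing that each composant $C$ contains a point $y_C$ with $\tau(y_C)\in X$ gives no control over \emph{which} composant $\tau(y_C)$ lies in, and Lemma~\ref{all1} applied to $E^*_X$ only hands you one composant $T$ with $f(T)$ internal; you need information flowing the other way (many composants forced into a first-category target), and nothing in your setup produces that. The fallback sketch is closer in spirit but also incomplete: having a separator $K\subset I$ for $\tau(X)$ is useless unless you already possess a dense connected subset of $\tau(X)$ that provably avoids $K$; ``projecting under $f$'' does not manufacture one.

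The paper supplies precisely this missing object. It first chooses a special internal composant $Y$ of $X$ (avoiding $f(E^*_X)$, which is first category by a result of Childers--Mayer--Tuncali--Tymchatyn, and chosen so that its subcontinua are tree-like and miss $v$). Then Rogers' theorem splits $f^{-1}(Y)=Y_1\sqcup Y_2$ into two ``composant-like'' pieces, and Lemma~\ref{intint} identifies $Y_1$ with an internal composant $Z$ of $X$, forcing $Y_2=\tau(Y_1)$ to be an internal composant of $\tau(X)$. A short argument (again via Lemma~\ref{intint} and Lemma~\ref{all1}) shows $Y_2$ is disjoint from \emph{every} internal composant of $X$. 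Now Lemma~\ref{kras}(\ref{cut}) applies cleanly: if $\tau(X)$ met an internal composant $I$ of $X$, the resulting separator $K\subset I$ would confine the dense set $Y_2\subset\tau(X)$ to one side of $K$, a contradiction. The ingredients you are missing are the external inputs (the Childers--Mayer--Tuncali--Tymchatyn first-category fact and Rogers' splitting theorem) that let one build $Y_2$; without them the argument does not go through.
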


\begin{proof} By (A7) $\tau(X)\cap X\ne\0$ and $\tau(X)\sm X\ne\0$.
Choose $x\in \tau(X)\sm X$, then $\tau(x)\in X\sm \tau(X)$ and,
hence, $\tau(X)\cap X$ is a proper, closed subset of  $X$. The fact that
$\tau(X)\cap X$ has empty interior in $X$ is much less trivial.

Let us first show that at most countably many composants of $X$ contain a
subcontinuum which separates $\C$. Indeed, if $C\subset Z$ is a separating
continuum, we can associate to $Z$ a bounded component $V_Z$ of $\C\sm C$.
Since $X$ is unshielded, the  sets $V_Z, V_Q$ for distinct composants $Z, Q$ of
$X$ are disjoint. Hence at most countably many composants of $X$ contain a
subcontinuum which separates $\C$. Also, there is exactly one composant which
contains the critical value $v$. By Lemma~\ref{kras}(\ref{FS}) and since
each composant is a first category $F_\si$-subset of $X$ (see Lemma~2.1 of
\cite{kras74}), the union of the above listed countably many composants and all
external composants of $X$ is still a first category $F_\si$-subset of $X$. Its
complement is the union $I^*_X$ of points of all composants from the collection
$I_X$ of all internal composants of $X$ for which every subcontinuum is
tree-like not containing the critical value $v$. Thus, $I^*_X$ is still a dense
$G_\delta$ subset of $X$.

By Theorem~4.2 of \cite{chmatuty06}, $f(E^*_X)$ is a first category
$F_\sigma$-subset of $X$. Hence we can choose a point $y\in I^*_X\sm f(E^*_X)$. Let
$Y$ be the internal composant of $X$ which contains $y$. Choose $z\in
f^{-1}(y)\cap X$, then $z$ is contained in an internal composant $Z$ of $X$. By
Theorem 5.5 of \cite{roge98}, $f^{-1}(Y)=Y_1\cup Y_2$ such that $Y_1\cap
Y_2=\0$ and for each $i$ and each $p,q\in Y_i$, there exists a subcontinuum
$P\subset Y_i$ such that $p,q\in P$. Moreover, the map $f|_{Y_{i}}:Y_i\to Y$ is
a bijection for each $i$. Assume that $z\in Y_1$. Then by Lemma~\ref{intint}
$f(Z)=Y$ and $Z=Y_1$. Hence $X$ and $\tau(X)$ contain the disjoint, internal composants
$Y_1$ and $\tau(Y_1)=Y_2$, respectively.

Let us show that if $Q$ is any internal composant of $X$, then $Q\cap Y_2=\0$.
Indeed, suppose otherwise. Then by symmetry the internal composant $\tau(Q)$ of
$\tau(X)$ intersects $Y_1$. Choose a point $u\in \tau(Q)\sm X$ and a point
$v\in \tau(Q)\cap Y_1$; then choose continuum $L\subset \tau(Q)$ which contains
both $u$ and $v$. Since $Y_1$ is an internal composant of $X$, this implies
that $\tau(Q)$ intersects all composants of $X$. On the other hand,
$f(\tau(Q))=f(Q)$ is an internal composant of $X$ by Lemma~\ref{intint}. Thus,
images of all composants of $X$ are non-disjoint from the composant $f(Q)$
contradicting Lemma~\ref{all1}.

In order to finish the proof it suffices to show that $\tau(X)\cap X\subset
E^*_X$. Suppose that this is not the case. Then $\tau(X)$ meets an internal
composant $I$ of $X$. It is easy to verify that Lemma~\ref{kras} applies to
this situation with $\tau(X)$ playing the role of $L$, $I$ playing the role of
$C$, and $X$ playing the role of $Y$.  Thus, it follows from
Lemma~\ref{kras}(\ref{cut}) that there exists a neighborhood $U$ of $\tau(X)$
and a continuum $K\subset I$ such that $K$ separates two points of $\tau(X)$ in
$U$. Since by the above $Y_2$ and $I$ are disjoint, $Y_2$ is contained in one
component of $U\sm K$, contradicting that $Y_2$ is dense in $\tau(X)$. This
completes the proof of the lemma.
\end{proof}

\section{Creating an invariant ray}\label{advaprel}

%Recall that our standing assumptions on $f$ is that $f$ is a branched covering
%map of degree $-2$. Then $X$ is an invariant continuum of $f$ minimal among
%invariant continua/points and such that $T(X)$ is fixed point free. In the
%previous section we list known properties of $X$; in particular
%\cite{bell67,siek68,ilia70} there exists an external ray $R_\beta$ of $T(X)$
%such that $\ol{R_\beta}\sm R_\beta=X$ and for all sufficiently small
%$R_\beta$-essential crosscuts $C$ of $R_\beta$, $f(C)$ separates $C$ from
%infinity in $\C\sm T(X)$. Also, recall that our standing assumption on $X$ is
%that $\tau(X)\sm X\ne\emptyset$.
%Finally, a terminological remark: in the
%forthcoming construction we deal with two planes on which distinct (but
%related) maps are acting. For simplicity we call the planes using sets and maps
%acting them, thus the original plane is called the \emph{$f$-plane.}
In what follows $R_\be$-essential and $R_\be$-defining crosscuts are called
simply \emph{essential} and \emph{defining}. To begin with, we need a lemma
which will allow us to simplify the applications of results of
\cite{fokkmayeovertymc07,overtymc07,kulkpink94} in this section. For simplicity
when talking of angles we often mean the points of $S^1$ with arguments equal
to these angles; here $S^1$ is considered as the boundary of the unit disk in
$\idisk$. Take the (Euclidean) convex hull $\ch(X)$ of $X$. Then the ray
$R_\be$ eventually enters $\ch(X)$ through a crosscut $\he\subset\bd(\ch(X))$
so that the tail of $R_\be$ stays inside the shadow $\sh(\he)$. Observe that
$\he$ is a straight segment. This defines the arc $I=(\hat \al, \hat \ga)$ of
angles whose rays have tails in $\sh(\he)$, and it follows that $\be\in I$.
Consider now a \emph{geodesic} $E_g$ of $\idisk$ connecting $\hal$ and $\hga$ and its
counterpart $E_f=\vp(E_g)$ which is a crosscut of $X$. Clearly, $R_\be$
eventually enters (and stays in) the shadow of $E_f$.

\begin{lemma}\label{notau}
There exists an essential crosscut $C'\subset \sh(E_f)$ with the following
properties.

\begin{enumerate}

\item We have that $\sh(C')\cap
\tau(X)=\0$ and, hence, $f(\sh(C'))\cap X=\0$.

\item Let $\A(C')$ be the set of points in $X$ accessible from $\sh(C')$.
Then $f|_{[\sh(C')\cup \A(C')]}$ is one-to-one and
    $f(\sh(C'))=\sh(f(C'))$. Moreover, there exist $\e>0, \da>0$ such that
    any defining crosscut $C_z, z\in R_\be\cap \sh(C'),$ is less than $\e$
    in diameter, and any essential crosscut $C\subset \sh(C')$ less than $\e$
    in diameter maps to a crosscut $f(C)$ which is at least $\da$-distant
    from $C$.
\end{enumerate}

\end{lemma}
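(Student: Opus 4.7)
The plan is to produce the crosscut $C'$ in part (1) by a careful choice of point in $X$ plus a refinement of the defining family of crosscuts of the prime end $\be$, and then to read off the quantitative claims in part (2) from compactness of $T(X)$ together with the fixed-point-freeness of $f|_{T(X)}$. By Lemma~\ref{emptyint}, $X\sm\tau(X)$ is open and dense in $X$, and combined with Lemma~\ref{kras}(\ref{FS}) (the union of internal composants is a dense $G_\da$ in $X$) I can fix a point $x_0\in X\sm\tau(X)$ lying in an internal composant, so $r_0:=d(x_0,\tau(X))>0$ and $B(x_0,r_0/2)\cap\tau(X)=\0$. Since $\Pi(\be)=X$, the ray $R_\be$ accumulates on $x_0$, and I can choose defining essential crosscuts $C_z$ at points $z\in R_\be$ with $\diam(C_z)\to 0$ and $C_z\subset B(x_0,r_0/4)$ once $z$ is close enough to $x_0$. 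Working in the uniformization plane, $\tE=\vp^{-1}(\tau(X)\cap U_\iy(X))$ is closed with empty interior in $\idisk$, and since the defining family at $\be$ can be deformed inside the $\vp^{-1}$-preimage of $B(x_0,r_0/2)$ (which does not meet $\tau(X)$) without affecting the prime end it defines, I can replace $C_z$ by a crosscut $C'$ from the family whose shadow is disjoint from $\tau(X)$. The second assertion of (1) is then immediate: if $w\in\sh(C')$ had $f(w)\in X$, then $w\in f^{-1}(X)=X\cup\tau(X)$, contradicting $\sh(C')\subset U_\iy(X)\sm\tau(X)$.

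For part (2), injectivity of $f$ on $\sh(C')\cup\A(C')$ reduces to showing that its $\tau$-image is disjoint from it. The $\tau$-image clusters near $\tau(X)$, and by the construction this is far from $\sh(C')\cup\A(C')$, which is adjacent to the portion of $X$ around $x_0$. Because every nonsingular fibre of $f$ is a pair $\{w,\tau(w)\}$ and the critical point $c\in T(X)$ is disjoint from $\sh(C')$, injectivity follows. Then $f|_{\ol{\sh(C')}}$ is a homeomorphism onto its image and Lemma~\ref{pull}(3), applied to its boundary arcs, yields $f(\sh(C'))=\sh(f(C'))$. For the uniform constants: by (A1) the restriction $f|_{T(X)}$ is fixed point free on the compact $T(X)$, so $\da_0:=\min_{x\in T(X)}|x-f(x)|>0$; by continuity this gives $|x-f(x)|\ge\da_0/2$ on some neighborhood $N\supset T(X)$. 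Shrinking $C'$ to a deeper crosscut arranges $\sh(C')\subset N$. Since $\diam(C_z)\to 0$ as $z\to X$, I fix $\e\in(0,\da_0/8)$ with $\diam(C_z)<\e$ for every defining $C_z$, $z\in R_\be\cap\sh(C')$. For any essential $C\subset\sh(C')$ with $\diam(C)<\e$ the triangle inequality then gives $d(C,f(C))\ge\da_0/2-2\e\ge\da_0/4=:\da$.

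The delicate step is the construction in part (1). The shadow of any essential crosscut contains a tail of $R_\be$, which accumulates on all of $X$ including points of $X\cap\tau(X)\ne\0$, so the open shadow cannot be made globally small and may in principle pass arbitrarily close to $\tau(X)\cap U_\iy(X)$. The key fact that rescues us is that $\tau(X)$ meets $X$ only in external composants (Lemma~\ref{emptyint}), so near the internal-composant point $x_0$ there is a positive-distance buffer from $\tau(X)$; the two-dimensional open bulk of the shadow can then be positioned on the $x_0$-side of $X$ in order to miss the one-dimensional set $\tau(X)\cap U_\iy(X)$.
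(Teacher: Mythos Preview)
Your proposal has a genuine gap in part (1), and it stems from underestimating the shadow $\sh(C')$. Since $C'$ is $R_\be$-essential, $\sh(C')$ contains an entire tail of $R_\be$; hence $\ol{\sh(C')}\supset X$ and $\A(C')$ is dense in $X$. You acknowledge this in your last paragraph, but your actual argument never confronts it: deforming $C_z$ inside $B(x_0,r_0/2)$ only moves the \emph{gate}, not the channel beyond it, and the assertion that the ``two-dimensional open bulk of the shadow can be positioned on the $x_0$-side of $X$'' is a hope, not a proof. Knowing that $\tE=\vp^{-1}(\tau(X)\cap U_\iy(X))$ has empty interior does not prevent it from threading through every shadow; a one-dimensional continuum can certainly enter an open region.

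The paper's proof supplies exactly the missing topological argument. One first observes (easily, as you do) that there is a fundamental chain $\{C_i\}$ of essential crosscuts with $\ol{C_i}\cap\tau(X)=\0$. Assuming by contradiction that every region $U_i$ between consecutive $C_i$'s meets $\tau(X)$, one joins a point of $\tau(X)\cap U_{i-1}$ to a point of $\tau(X)\cap U_i$ by an arc in $U_{i-1}\cup U_i$ crossing $C_i$ exactly once, and trims it to a crosscut $B$ of $\tau(X)$. The $\tau(X)$-shadow $W$ of $B$ then contains exactly one endpoint of $C_i$ and not the other. Now the internal-composant information from Lemma~\ref{emptyint} is used as a \emph{separation} obstruction, not a metric one: any internal composant $Z$ of $X$ is dense in $X$, hence has points in $W$ and in its complement; but $Z\cap\tau(X)=\0$ and $Z\cap B=\0$ (since $B\subset U_\iy(X)$), so the connected set $Z$ cannot meet both sides of $\bd W\subset\tau(X)\cup B$. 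This is the step your proposal lacks.

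The same misconception undermines your injectivity argument in part (2). You write that $\sh(C')\cup\A(C')$ is ``adjacent to the portion of $X$ around $x_0$'' and hence far from $\tau(X)$, but in fact $\ol{\A(C')}=X\supset X\cap\tau(X)$, so the two sets have intersecting closures and no distance argument is available. The paper instead first proves $f(\sh(C'))=\sh(f(C'))$ by a boundary chase using openness of $f$ together with $f(\sh(C'))\cap X=\0$ from part (1), and only then infers that $f|_{\sh(C')}$ is a homeomorphism via Lemma~\ref{pull}(4) (since $c\nin\sh(C')$); Lemma~\ref{pull}(3), which you cite, concerns pullbacks rather than images and does not give this. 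Injectivity on $\A(C')$ is handled separately using that the indecomposable $X$ has no cutpoints. Your treatment of the uniform constants $\e,\da$ at the end of part (2) is fine.
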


\begin{proof}
(1) Consider a defining family $C_z, z\in R_\be$ of crosscuts of $\be$. Then there
exists a sequence of defining crosscuts $C_{z_i}=C_i, i=1, \dots$ such that
$z_1>_\be z_2>_\be \dots$, points $z_i\in R_\be$ converge to $X$, and
all $\ol{C}_i$'s are disjoint from $\tau(X)$ (otherwise for some $z'\in R_\be$ and
all $z<_\be z'$ we would have $C_z\cap \tau(X)\ne \0$ implying that
$\tau(X)\supset \Pi(R_\be)=X$, a contradiction with Lemma~\ref{emptyint}).
We may assume that $C_1\subset \sh(E_f)$.

Denote by $U_i$ the open component of $\C\sm [C_i\cup C_{i+1}\cup X]$ which
contains points of $R_\be$ located between $z_i$ and $z_{i+1}$. By way of
contradiction (and refining if necessary the sequence $C_i$) we may assume that
every $U_i$ contains points of $\tau(X)$. Choose a point $x\in U_{i-1}\cap
\tau(X)$ and a point $y\in U_{i}\cap \tau(X)$. Connect these points with an
arc $A$ in $U_{i-1}\cup U_i$ which intersects $C_i$ at just one point $w$. Then
choose points $s, t\in A\cap \tau(X)$ so that the subarc $B$ of $A$ with the
endpoints $s, t$ is disjoint from $\tau(X)$ and contains $w$.

It follows that $B$ is a crosscut of $\tau(X)$. Denote the endpoints of $C_i$
by $y', y''$. Also, denote by $W$ the shadow of $B$ in the sense of
$\tau(X)$. Then it follows that one of the points $y', y''$ belongs to $W$ and
the other one does not. Now, consider an internal composant $Z$ of $X$. It has
points close to both points $y'$ and $y''$, hence it has points both inside $W$
and outside $W$. However, by Lemma~\ref{emptyint} $\tau(X)$ is disjoint from
any internal composant of $X$, a contradiction. Hence indeed there exists an
essential crosscut $C$ such that $\sh(C)\cap \tau(X)=\0$ which
implies that $f(\sh(C))\cap X=\0$.

(2) We may assume that $C$ and \emph{all} $R_\be$-defining crosscuts $C_z, z\in
R_\be\cap \sh(C)$ are sufficiently small. By continuity and since $T(X)$ is
fixed point free, images of all these crosscuts are disjoint from the crosscuts
themselves (each crosscut moves off itself by a bounded away from $0$
distance). Moreover, by (A2) and because of the properties of crosscuts,
$C_z$'s approach all points of $X$. Choose a crosscut $C'$ among them so that
$C'$ is sufficiently far from $c$ and hence $f(C')$ is sufficiently far from $v$
so that $v\nin T(f(C'))$. By Lemma~\ref{pull} then $C'$ is a homeomorphic
pullback of $f(C')$ and so $f(C')$ is a small crosscut too. Also, we can choose
$C'$ so that $c\nin \sh(C')$.

We claim that $f(\sh(C'))=\sh(f(C'))$ and $f|_{\sh(C')}$ is a homeomorphism.
Indeed, $\bd f(\sh(C'))\subset f(\bd \sh(C'))$ since $f$ is open. Hence we see
that $\bd f(\sh(C'))\subset X\cup f(C')$. Points of $\sh(C')$ cannot
be mapped to $U_\iy(X)$ outside $\sh(f(C'))$ because otherwise there will be
points of $\bd f(\sh(C'))$ not in $X\cup f(C')$. Considering points close to
$C'$ shows that some points of $\sh(f(C'))$ are in $f(\sh(C'))$. Now the fact
that $f(\sh(C))\cap X=\0$ implies that $f(\sh(C'))\subset \sh(f(C'))$. Finally,
if $f(\sh(C'))\ne \sh(f(C'))$ then there will have to be points of $\bd
f(\sh(C'))$ in $\sh(C')$, a contradiction to $\bd f(\sh(C'))\subset X\cup
f(C')$. Thus, $f(\sh(C'))=\sh(f(C'))$. Hence by Lemma~\ref{pull} $f|_{\sh(C')}$
is a homeomorphism.

This easily implies that $f|_{[\sh(C')\cup\A(C')]}$ is one-to-one too. Indeed,
suppose that $z=f(x)=f(y)$ for $x\ne y\in\A(C')$. Clearly, $x\ne c, y\ne c,
z\ne v$. Choose an arc $A$ joining $x$ to $y$ in $\sh(C')$. Then $f(A)$ is a
simple closed curve $S$ so that $S\cap X=\{z\}$. Choose a small arc
$Q=[w,w']\subset S$ (in the circular order on $S$) so that $z\in Q, z\ne w,
z\ne w'$. Then there exist arcs $Q_x$ and $Q_y$, containing $x$ and $y$,
respectively such that $f(Q_x)=f(Q_y)=Q$. Since $X$ is indecomposable and,
hence, contains no cutpoints, $Q_x\cup Q_y\subset \sh(C')\cup \{x\}\cup \{y\}$.
This contradicts the fact that $f$ is one-to-one on $\sh(C')$ and completes the
proof.
\end{proof}

Recall that $\vp:\disk^\iy\to U_\iy(X)$ is a Riemann map with $\vp(\iy)=\infty$
(we extend $\vp$ over the set of angles with landing rays). Next we introduce a
construction from \cite{fokkmayeovertymc07} simplified in our case thanks to
Lemma~\ref{notau}. Take a closed round ball $B$ such that $\Int(B)\cap X=\0$
and $\ch(B\cap X)\subset \sh(E_f)\cap X$  is non-degenerate. Observe that
points of $B\cap X$ are accessible. Call a ball $B$ \emph{essential} if
$\bd(B)\sm X$ contains an essential crosscut (i.e., if $B$ ``crosses over
$R_\be$'' from one ``side'' of $R_\be$ to the other in an essential way).

Let $\Bc'$ be the family of all closed round balls $B$ with $\Int(B)\cap X=\0$
and let $\Bc$ be the family of all balls $B\in \Bc'$ such that $\ch(B\cap
X)\subset \sh(E_f)$ consists of at least two points. Then $B$ is maximal by
inclusion among all balls in $\Bc'$. It is easy to give examples reflecting
various possibilities for the sets $B\cap X$; in exceptional cases, the set
$B\cap X$ could be infinite, and in truly exceptional cases it can even contain
arcs of $\bd B$. The following lemma allows us to introduce the exact shadow in
the plane in which we will change the map to make a tail of $R_\be$ invariant.
It will be improved later and is needed here as matter of convenience to
simplify the forthcoming construction.

\begin{lemma}[\cite{fokkmayeovertymc07}]\label{gates}
There exists an essential ball $B^*\in \Bc$ of diameter less than $\e$ such that
$\Int(B^*)\subset \sh(C')$ and $|B^*\cap X|=2$.
\end{lemma}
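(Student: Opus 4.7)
The plan is to construct $B^*$ by starting with a very small ball around a point of $R_\be$ lying deep in its tail and then enlarging it. First, I would pick a point $z\in R_\be$ so far along the tail that $z\in\sh(C')$, so that the maximal defining crosscut size near $z$ is $<\e/8$, and so that $d(z,X)$ is small enough that a small closed ball around $z$ lies inside $\sh(C')$ and is disjoint from $X$. Call such a ball $B_0$. The family
\[
\M \;=\; \{\,B \text{ closed round ball}\;:\; B_0\subset B\subset \ol{\sh(C')},\; \Int(B)\cap X=\0,\; \dia(B)<\e\,\}
\]
is nonempty, and by a compactness/Zorn argument (each chain in $\M$ is controlled by centers and radii in a bounded region) it has a maximal element $B^*$. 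Then $B^*\in\Bc'$ by construction, and because $\sh(C')\subset \sh(E_f)$ any contact with $X$ lies in $\sh(E_f)$.

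Next I would verify the three nondegeneracy properties. \emph{Essentiality:} since $z\in \Int(B^*)\cap R_\be$, the connected component of $R_\be\cap B^*$ through $z$ is a sub-arc $J$ of $R_\be$ whose two endpoints lie on $\bd B^*\sm X$; together with one of the two arcs of $\bd B^*\sm\{\text{endpoints of }J\}$ that is disjoint from $X$ one obtains an essential crosscut contained in $\bd(B^*)\sm X$, so $B^*$ is essential in the sense of the paper. \emph{At least two contact points:} if $\bd B^*\cap X=\{p\}$ then one could translate $B^*$ a small amount in the direction away from $p$ inside $\sh(C')$, opening up room to enlarge the radius, contradicting maximality of $B^*$ in $\M$. \emph{Containment $\Int(B^*)\subset \sh(C')$ and $\dia(B^*)<\e$:} the first is forced by how $\M$ was defined (any ball in $\M$ whose interior touched $C'$ could be truncated without losing the contact with $X$, or we simply take $C'$ sufficiently deep so that the construction cannot reach it, using Lemma~\ref{notau}(2)); the second is the standing bound built into $\M$.

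The main obstacle is arranging \emph{exactly} two contact points. My plan is a genericity/perturbation argument. Because $R_\be$ digs a dense channel in $\sh(C')$ and $X$ is $1$-dimensional and unshielded, the locus of centers $w\in \sh(C')$ for which the maximal round ball centered at $w$ with interior disjoint from $X$ has three or more contact points is a meager, codimension-one set (it is the ``branch locus'' of the medial axis of $X$ inside $\sh(C')$). Therefore, after sliding $z$ along a short sub-arc of $R_\be\cap \sh(C')$ and rechoosing the starting ball $B_0$, the maximal ball $B^*$ constructed above can be taken to have exactly two boundary contacts with $X$. Equivalently, among all $B^*$ produced by the above construction for various admissible $z$'s, choose one minimizing $|B^*\cap X|$; the minimum is not $1$ by the preceding paragraph, and it cannot exceed $2$ by the genericity just described, so it equals $2$. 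This yields the desired essential ball $B^*\in\Bc$ with $\Int(B^*)\subset \sh(C')$, $\dia(B^*)<\e$, and $|B^*\cap X|=2$.
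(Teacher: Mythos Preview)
The paper does not supply a proof of this lemma; it is quoted verbatim from \cite{fokkmayeovertymc07}. So there is no ``paper's own proof'' to compare against, and your proposal has to stand on its own. As written it does not.

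There are three genuine gaps. First, building the diameter bound $\dia(B)<\e$ directly into the family $\M$ is a mistake: a maximal element of $\M$ could have diameter tending to $\e$ while its boundary never meets $X$ at all (maximality in $\M$ is then witnessed by the diameter constraint, not by contact with $X$). What one actually needs is that the \emph{unconstrained} maximal ball through a sufficiently deep $z\in R_\be$ already has small diameter; this is true, but it is a consequence of the narrow-channel geometry of $R_\be$ inside $\sh(C')$ and has to be argued, not assumed. Second, your essentiality step is incomplete. An essential ball requires that some \emph{component} of $\bd B^*\sm X$ be an $R_\be$-essential crosscut, i.e.\ an arc with both endpoints on $X$ crossed by $R_\be$ transversally at a single point. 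Knowing only that $z\in\Int(B^*)$ tells you $R_\be$ enters and exits $B^*$, but not that it does so through two \emph{different} components of $\bd B^*\sm X$, nor that any single component is crossed exactly once. You have to use the order structure on $R_\be$ and the location of the contact points to force this.

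Third, and most seriously, the ``genericity of the medial axis'' argument for $|B^*\cap X|=2$ is not a proof. For smooth or even rectifiable boundaries the branch locus of the medial axis is indeed thin, but $X$ here is an arbitrary indecomposable plane continuum; as the paper itself notes just before the lemma, $B\cap X$ can be infinite and can even contain arcs of $\bd B$. Nothing you wrote excludes the possibility that \emph{every} maximal ball meeting a tail of $R_\be$ has $|B\cap X|\ge 3$. The argument in \cite{fokkmayeovertymc07} obtains a two-point ball by a different, more combinatorial route (working with the hyperbolic-convex-hull foliation and picking a leaf rather than a gap), not by perturbing centers. If you want a self-contained proof, you should look for an essential $f$-leaf of the foliation $\hx$ inside $\sh(C')$---such a leaf corresponds exactly to a maximal ball with two contact points---rather than trying to perturb a generic maximal ball.
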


Consider the two crosscuts which are components of $\bd(B^*)\sm X$. Choose among
them \emph{the} crosscut $\tc$ which gives rise to the shadow containing
$\Int(B^*)$. Clearly, $\tc$ is essential (it suffices to consider the picture
in the uniformization plane). Suppose that $\tc$ has the endpoints $a, d\in X$.
For the corresponding angles in $S^1$ we use the notation $\al', \ga'$. Connect
$\al', \ga'$ with a hyperbolic geodesic in $\idisk$ and denote this new
crosscut of $\idisk$ by $C'_g$. Denote the crosscut $\vp(C'_g)$ of $X$ by $C'_f$ ($C'_f$
replaces the crosscut $C'$  previously introduced in Lemma~\ref{notau} and has
all the properties of $C'$ listed in Lemma~\ref{notau}).

Now, mimicking the construction from \cite{fokkmayeovertymc07} we transport
the map $f$ to the set $U=\sh(C'_g)$ by considering a map
$g(x)=\vp^{-1}\circ f\circ \vp(x), x\in U$. Since $f|_{\sh(C'_f)}$ is a
homeomorphism, so is the map $g|_U$. Consider the arc of $S^1$ defined as
$[\al', \ga']=\bd U\cap \disk^\iy$. It follows from the construction that
$\be\in (\al', \ga')$.

To make a distinction, we use ``$g$-'' in the names of the objects in the
uniformization plane (mostly these objects are $\vp^{-1}$-images of their
counterparts from the $f$-plane). Thus, the uniformization plane is called the
\emph{$g$-plane}; to each essential crosscut $C\subset \sh(C'_f)$ we associate
its counterpart $\vp^{-1}(C)$, called an \emph{essential $g$-crosscut} (which
is an arc connecting two points of $S^1$, one in $(\al', \be)$ and the other in
$(\be, \ga')$, inside $U$ and intersecting $\vp^{-1}(R_\be)=R'_\be$ only once);
etc. It will follow that $g(\vp^{-1}(C))$ is again an essential \emph{$g$-crosscut}
associated to the crosscut $f(C)$. The results of \cite{fokkmayeovertymc07}
give more information about how the map $g$ acts on $g$-crosscuts. Namely, the
following theorem holds.

\begin{theorem}[Theorems 6.5 and 9.1, \cite{fokkmayeovertymc07}]\label{g}
The map $g$ can be continuously extended over the arc
$[\al', \ga']$. Moreover, it has the following properties:

\begin{enumerate}

\item $g(\be)=\be$;

\item $g$ maps the arc $[\al', \ga']$ onto the arc $[g(\al'), g(\be')]$
    homeomorphically and changes orientation (so that $g$ \emph{flips}
    essential $g$-crosscuts contained in $\sh(C'_g)$);

\item for every essential crosscut $C''\subset \sh(C'_f)$ of diameter less
    than $\e$, the $g$-crosscut $g(\vp^{-1}(C''))$ separates  $\vp^{-1}(C'')$
    from $\iy$ in $\idisk$.

\end{enumerate}

\end{theorem}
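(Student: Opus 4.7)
My plan is to establish Theorem~\ref{g} by combining the one-to-one/homeomorphism information from Lemma~\ref{notau} with standard Carath\'eodory-theoretic arguments in the $g$-plane, tracking how $f$ acts on defining crosscuts under the conjugation by $\vp$.

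\emph{Step 1: continuous extension to $[\al',\ga']$.} On $U=\sh(C'_g)$ the map $g=\vp^{-1}\circ f\circ\vp$ is continuous as a composition of continuous maps. To extend $g$ to the arc $[\al',\ga']\subset S^1$, I would let $\ta\in[\al',\ga']$ correspond to a prime end of $\sh(C'_f)$; pick a fundamental chain $\{C_n\}$ representing this prime end with $C_n\subset\sh(C'_f)$. By Lemma~\ref{notau}(2), $f$ is a homeomorphism on $\sh(C'_f)\cup\A(C'_f)$, so the endpoints of each $C_n$ in $X\cap\A(C'_f)$ map injectively to endpoints of $f(C_n)\in\sh(f(C'_f))$ in $X$, and the image sequence $\{f(C_n)\}$ is again a fundamental chain (its diameters go to $0$ by uniform continuity on the compact closure of a slightly smaller shadow). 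Pulling back by $\vp^{-1}$ yields a fundamental chain in $\idisk$, whose limiting boundary point defines $g(\ta)$; continuity follows because the diameters of the shadows tend to zero.

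\emph{Step 2: $g(\be)=\be$ and orientation reversal.} The tail of $R_\be$ lies in $\sh(C'_f)$ with principal set $X$ (by (A2)). Its $f$-image is a curve whose principal set is $f(X)=X$; pulling back via $\vp^{-1}$ we get a curve in $\idisk$ whose accumulation on $S^1$ must lie in $[\al',\ga']$ because the image tail eventually re-enters $\sh(C'_f)$ (using Lemma~\ref{notau}(2), which gives $f(\sh(C'))=\sh(f(C'))$ and the crosscuts can be chosen to nest). The specific alignment of the essential ball $B^*$ and the crosscut $\tc$ was arranged so that the pulled-back image of $R_\be$ lands at the same angle $\be$; this yields $g(\be)=\be$. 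Orientation reversal is forced by $\dg(f)=-2$: near any accessible non-critical point of $X$ the local branch of $f$ reverses orientation, and since $\vp$ is conformal, $g$ inherits this reversal on the boundary arc, which geometrically means that $g$ flips essential $g$-crosscuts (their two endpoints swap sides of $\be$).

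\emph{Step 3: separation property.} By Lemma~\ref{notau}(2), for sufficiently small essential crosscuts $C''\subset\sh(C'_f)$, the image $f(C'')$ is at least $\da$-distant from $C''$ and $f|_{\sh(C'')}$ is a homeomorphism onto $\sh(f(C''))$. Combined with $f(T(X))\subset T(X)$ being fixed point free and with the orientation-reversal established in Step~2, the image crosscut must lie on the $X$-side of $C''$ in the shadow structure. Translating through $\vp^{-1}$, this is precisely the statement that $g(\vp^{-1}(C''))$ separates $\vp^{-1}(C'')$ from $\iy$ in $\idisk$.

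\emph{Main obstacle.} The delicate point is Step~2: showing that the geometric data $(B^*,\tc,C'_g)$ is aligned correctly so that $g$ fixes the angle $\be$ and swaps its two sides, rather than say translating $\be$ or preserving its sides. This is what is done carefully in Theorems~6.5 and 9.1 of \cite{fokkmayeovertymc07}, and my plan above would invoke those results for this specific alignment step; the rest of the proof then reduces to the Carath\'eodory-type bookkeeping in Steps~1 and~3, which is routine given Lemma~\ref{notau}.
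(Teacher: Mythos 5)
This statement is imported verbatim from \cite{fokkmayeovertymc07} (Theorems 6.5 and 9.1); the paper offers no proof of it whatsoever, so there is no internal argument to compare yours against. Your proposal, in the end, also defers the only genuinely hard points --- that the prime end $\be$ is fixed by the induced boundary map and that it is the \emph{repelling} (outchannel) direction --- to those same theorems. So at bottom you and the paper are doing the same thing, namely citing \cite{fokkmayeovertymc07}, and your Step 1 (extension over $[\al',\ga']$ via images of fundamental chains, using Lemma~\ref{notau}(2) to see that $f$ carries defining crosscuts to crosscuts with shrinking shadows) is a reasonable account of the routine part.

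Two cautions about the surrounding sketch. First, in Step 2 you attribute $g(\be)=\be$ to ``the specific alignment of the essential ball $B^*$ and the crosscut $\tc$.'' That is not where the fixedness comes from: $B^*$ and $\tc$ only determine the domain $\sh(C'_g)$ on which $g$ is considered, and $g(\be)=\be$ is a dynamical fact about the unique prime end whose principal set is all of $X$ (property (A2)), established in \cite{fokkmayeovertymc07} by variation/index arguments independently of the choice of $B^*$. Second, in Step 3 you say the image crosscut ``must lie on the $X$-side of $C''$'' and then assert that this translates into $g(\vp^{-1}(C''))$ separating $\vp^{-1}(C'')$ from $\iy$. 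These two statements point in opposite directions: if $f(C'')$ lay on the $X$-side of $C''$, then $\vp^{-1}(C'')$ would separate $g(\vp^{-1}(C''))$ from $\iy$, i.e.\ $\be$ would be an inchannel. Item (3) asserts precisely the opposite, that crosscuts are pushed toward $\iy$, and deciding between the two cases is again the nontrivial content of \cite{fokkmayeovertymc07}; it does not follow from Lemma~\ref{notau}(2) together with fixed-point-freeness, which only give that $f(C'')$ is $\da$-far from $C''$, not on which side of $C''$ it lands.
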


%By a (closed) \emph{Jordan disk} we mean a set homeomorphic to $\ol{\disk}$.
%We will work with
%Now we need to introduce notation specific for our situation.

Suppose that a closed set $Y\subset S^1$ is chosen and consider its convex hull
$\HCD(Y)=A$ in the sense of the hyperbolic metric in $\idisk$. Hence $\HCD(Y)$
can be obtained by considering the set of components $C_i$ of $S^1\sm Y$ and
joining the endpoints $a_i,b_i$ of $C_i$ by the geodesic in the hyperbolic
metric (i.e., the intersection of the  round circle through the points $a_i$
and $b_i$ with $\idisk$ which crosses $S^1$ perpendicularly; see
\cite{fokkmayeovertymc07}). Technically, $A\cap\idisk$ is a subset of $\idisk$
which can be mapped to the $f$-plane by $\vp$. The closure of
$\vp(A\cap\idisk)$ may be very complicated and not homeomorphic to $\ol{A}$.
However if the rays with the arguments from $Y$ land at distinct points of $X$
then $\ol{A}$ and $\ol{\vp(A)}$ are homeomorphic. In this case the set $A$ will
be called a \emph{$g$-cell} and the set $\vp(A)$ will be called an
\emph{$f$-cell}. In what follows speaking of a map $\vp$ restricted onto a
cell, we \emph{always extend $\vp$ over the boundary of the cell} (by
definition, $\vp$ then remains a homeomorphism). Observe that a $g$-cell could
be an arc (without endpoints) or a Jordan disk (with points on $\bd(\idisk)$
removed). Moreover, if $\ta\in S^1$ is such an angle that $R_\ta$ lands on
$\vp(\ta)$ then we say that $\ta$ is a \emph{degenerate $g$-cell} and
$\vp(\ta)$ is a \emph{degenerate $f$-cell}.

The map $\vp$ can give a good correspondence between closed Jordan disks in the
$f$-plane and in the $g$-plane. Suppose that $D$ is a closed Jordan disk in the
$g$-plane such that $\vp$ extends over the boundary of $D$. Then we call $D$
\emph{strongly homeomorphic} to $\vp(D)=A$ if $|D\cap S^1|\ge 2$ and $\vp|_D$
is a homeomorphism. A closed Jordan disk $A$ in the $f$-plane, strongly
homeomorphic to $\vp^{-1}(A)$, is called \emph{admissible}. One can transform
an admissible Jordan disk $A$ in the $f$-plane to an $f$-cell $H(A)$: choose
the hyperbolic convex hull $\HCD(\ol{\vp^{-1}(A)}\cap S^1)$ of $\ol{D}\cap S^1$
%(i.e. such that its
%boundary is homotopic to $\bd D$ by a homotopy of the plane fixing $\disk$),
and then take its $\vp$-image denoted $H(A)$.

A hyperbolic geodesic $\ell$ in the $g$-plane is called a \emph{$g$-geodesic}.
If $\vp(\ol{\ell})$ is homeomorphic to $\ol{\ell}$ (i.e., to the closed
interval), then $\vp(\ol{\ell})$ is called an \emph{$f$-geodesic}.
%In this case
%we call $\ell$ an \emph{admissible} $g$-geodesic.
Thus, boundary arcs of a $g$-cell are $g$-geodesics whose $\vp$-images are
$f$-geodesics. Clearly, two angle-arguments give rise to a $g$-geodesic whose
$\vp$-image is an $f$-geodesic if and only if the rays with these arguments
land. We will also consider a \emph{degenerate geodesic with argument $\ta$},
i.e. an accessible point at which the ray $R_\ta$ land. Important facts
concerning $f$-geodesics were established in \cite{overtymc07}.

\begin{definition}\label{topdi}
%(1) Given an arc $I$ outside $T(X)$ which lands at a point $x\in X$ we
%associate to the pair $x, I$ an angle $\vp^{-1}_I(x)$ such that (a) the ray
%$R=R_{\vp^{-1}_I(x)}$ lands at $x$ and (b) $I$ can be transformed into a tail
%of $R$ by a homotopy which fixes $T(X)$. If it is clear from the context what
%$I$ is, it may be omitted from the notation.

Given an admissible Jordan disk $A$ in the $f$-plane, define the following
sets:

(a) $P_A=(\bd A)\cap X$;

(b) $C_A=A$ (if $A$ is a crosscut) or the unique component of $(\bd A)\sm X$ which
is a crosscut with $\Int(A)\subset \sh(C_A)$;

(c) $I_A\subset S^1$ is an arc such that rays with the
arguments from $I_A$ have tails in $\sh(C_A)$;

(d) $Y_A=\ol{\vp^{-1}(A)\cap S^1}$ (recall, that according to the definition
we have that $\vp^{-1}(A)\cap\idisk=\HCD(Y_A)$).

\end{definition}

We need to study $g$-geodesics and $f$-geodesics. As a tool we use the
so-called \emph{maximal ball} foliation constructed and studied in
\cite{fokkmayeovertymc07,overtymc07,kulkpink94}. Given a crosscut $T$, denote
the set $\sh(T)\cup T$ by $\sh^+(T)$. We foliate the sets $\sh^+(C'_g)$ in the
$g$-plane and $\sh^+(C'_f)$ in the $f$-plane by  corresponding (to each other)
and specifically constructed $g$-cells and $f$-cells. However first we need to
introduce some definitions.

\begin{definition}\label{folia}
Suppose that there is a $g$-geodesic $T$ in the $g$-plane and that $\vp(T)$ is
an $f$-geodesic. Suppose that there is a family $\A$ of $g$-cells such that the
following holds. For \emph{any} $z\in \sh^+(T)$ either there is a
\emph{unique} $g$-geodesic from the boundary of a $g$-cell $A\in \A$ containing
$z$, or $z$ belongs to the interior of a unique $A\in \A$. The geodesic
containing $z$ maybe the intersection of two distinct $g$-cells. Then we call
$\A$ a \emph{($g$-)foliation} (of $\sh^+(T)$).

By the definition of a $g$-cell this property is transported to the $f$-plane
by means of the map $\vp$ and applies also to the family $\vp(\A)$ of the
corresponding $f$-cells and the set $\sh^+(\vp(T))$. Then $\vp(\A)$ is called a
an \emph{($f$-)foliation} (of $\sh^+(\vp(T))$). The collections $\A, \vp(\A)$
are then said to be \emph{sibling foliations} (of $\sh^+(T)$ and
$\sh^+(\vp(T))$, respectively). Also, the closure of a $g$- or an $f$-geodesic on
the boundary of a $g$- or an $f$-cell is called a \emph{($g$-) or ($f$-)leaf}
(of the corresponding foliation).
\end{definition}

In some cases we can say much more about continuity properties of foliations.
To this end we need another definition.

\begin{definition}\label{upper1}
Suppose that there is a $g$-geodesic $T$ in the $g$-plane and that there is a
foliation $\A$ of $\sh^+(T)$. Then $\A$ is said to be
\emph{upper-semicontinuous} if the following holds: if a sequence of distinct
$g$-cells from $\A$ converges, and its limit is not a point, then it converges
to a $g$-leaf. If the $\vp$-images of these $g$-cells converge to the $f$-leaf
$\vp(\ell)$, then we say that $\vp(A)$ is \emph{upper-semicontinuous}.
\end{definition}

The sheer fact that $\A$ is a foliation of $\sh^+(T)$ easily implies that $\hg$
is upper-semicontinuous. Indeed, a sequence of $g$-cells $A_i$ which does not
converge to a point must converge to a $g$-geodesic $\ell$ (recall that as
elements of $\A$ the $g$-cells do not intersect inside $\idisk$). If $\ell$ is
not from the boundary of an element of $\A$, then, since all points inside
$\sh^+(T)$ must belong to an element of $\A$, we will find an element of $\A$
which intersects $A_i$, a contradiction. The fact that the corresponding
foliation $\vp(\A)$ of $\sh^+(\vp(T))$ is upper-semicontinuous is rather
non-trivial.

We also give a definition close to Definition~\ref{upper1} which deals with
\emph{null-sequences} of admissible Jordan disks. Given an admissible Jordan
disk $Z$, define $\rho_Z(x, y)$ as the infimum of diameters of open arcs $J(x,
y)=J$ in $U_\iy$ whose closures $\ol{J}$ connect $x, y$ and are homotopic to
arcs $I(x,y)=I\subset Z$ connecting $x, y$ inside $Z\cap U_\iy$ under a
homotopy in $U_\iy$ fixing $x$ and $y$ (the definition is inspired by the
\emph{Mazurkiewicz metric} discussed later).

\begin{lemma}\label{conthyp}
There exists a universal constant $K$ with the following property. Suppose that
$A$ is an admissible Jordan disk in the $f$-plane such that for any two points
$x, y$ of $\ol{A}\cap X=P_A$ we have $\rho_A(x, y)\le M$. Then $\diam(H(A))\le
KM$.
%and for any two points $x, y\in P_A$ there exist a crosscut $K(x,
%y)$ connecting $x$ and $y$ of length at most $M$ and an $f$-geodesic $G(x, y)$
%connecting $x$ and $y$, which are both homotopic to a crosscut $J(x, y)$
%connecting $x$ and $y$ inside $A$. Then any two points of $H(A)$ can be
%connected by an arc inside $H(A)$ of length at most $KM$.
\end{lemma}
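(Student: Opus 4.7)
The plan is to translate the hypothesis on the Mazurkiewicz-style pseudometric $\rho_A$ into a Koebe-type diameter bound for $\vp$-images of hyperbolic geodesics in $\idisk$, and then use this to control $\diam(H(A))$ directly from the formula $H(A)=\vp(\HCD(Y_A))$.

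First I would unpack the target. Since $A$ is admissible, $\vp$ extends as a homeomorphism over $\ol{\vp^{-1}(A)}$, so each angle of $Y_A$ corresponds bijectively to an accessible point of $P_A=(\bd A)\cap X$. Every point of $\HCD(Y_A)$ either lies in $Y_A$ or on a unique hyperbolic geodesic $g_{p,q}\subset\idisk$ joining two points $p,q\in Y_A$. Hence
\[
H(A)=P_A\cup\bigcup_{p,q\in Y_A}\vp(g_{p,q}),
\]
and it suffices to bound $\diam(P_A)$ and $\diam(\vp(g_{p,q}))$ by universal multiples of $M$. The bound $\diam(P_A)\le M$ is immediate: for $x,y\in P_A$ the hypothesis gives $\rho_A(x,y)\le M$, so there are arcs in $U_\iy$ of diameter arbitrarily close to $M$ joining $x$ and $y$, whence $|x-y|\le M$.

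For the main estimate, fix $p,q\in Y_A$ with $x=\vp(p)$, $y=\vp(q)$. By definition of $\rho_A$ and the hypothesis, for every $\eta>0$ there is an open arc $J\subset U_\iy$ with $\ol J$ joining $x,y$, of diameter at most $M+\eta$, and homotopic in $U_\iy$ rel endpoints to an arc $I\subset A\cap U_\iy$. The homotopy class forces $\vp^{-1}(J)$ to accumulate on exactly the same boundary angles $p,q\in S^1$ as $\vp^{-1}(I)$; equivalently, $\vp^{-1}(J)\cup\{p,q\}$ is a crosscut of $\idisk$ with endpoints $p,q$ on $S^1$. I then invoke a classical Koebe-type distortion estimate (in the form of the Gehring--Hayman theorem) for the univalent map $\vp:\idisk\to U_\iy$: the Euclidean diameter of the $\vp$-image of the hyperbolic geodesic joining two prime-end angles is bounded by a universal constant $K_0$ times the Euclidean diameter of any crosscut of $U_\iy$ realizing the same homotopy class between the corresponding accessible points. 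Applying this with $J$ yields $\diam(\vp(g_{p,q}))\le K_0(M+\eta)$; letting $\eta\to 0$ gives $\diam(\vp(g_{p,q}))\le K_0 M$ for every $p,q\in Y_A$. Combining with $\diam(P_A)\le M$ and the triangle inequality yields $\diam(H(A))\le(2K_0+1)M$, so $K:=2K_0+1$ works.

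The main obstacle is the homotopy bookkeeping: an accessible point of $X$ may correspond to several prime ends, and only the prescribed homotopy class in $U_\iy$ of $J$ selects the correct angles in $S^1$. Once admissibility of $A$ has been used to pin down this correspondence (so that $\vp^{-1}(J)$ really is a crosscut of $\idisk$ with endpoints the given $p,q\in Y_A$), the Koebe-type distortion estimate is a standard input and the rest of the argument is mechanical.
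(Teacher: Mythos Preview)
Your approach and the paper's are essentially the same: both reduce the lemma to the Gehring--Hayman/Pommerenke estimate (the paper cites \cite[Theorem~4.2]{pom92}) bounding the Euclidean diameter of the $\vp$-image of a hyperbolic geodesic by a universal constant times the diameter of any crosscut in the same homotopy class, and then combine this with the triangle inequality.

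There is, however, a small gap in your decomposition step. The claim that every point of $\HCD(Y_A)$ lies on a geodesic $g_{p,q}$ with $p,q\in Y_A$ is false as soon as $|Y_A|\ge 3$: already for three ideal vertices the interior of the ideal triangle is disjoint from the three boundary geodesics, and no geodesic through an interior point has both endpoints among the vertices. So the equality $H(A)=P_A\cup\bigcup_{p,q\in Y_A}\vp(g_{p,q})$ does not hold in general. The fix is immediate and is exactly what the paper does: for arbitrary $u,v\in H(A)$, take the hyperbolic geodesic through $\vp^{-1}(u),\vp^{-1}(v)$ and extend it until it meets two \emph{boundary} geodesics of $\HCD(Y_A)$; those boundary geodesics do have endpoints in $Y_A$, so your Gehring--Hayman bound applies to them, and one more application (to the geodesic joining a chosen endpoint of each) closes the chain. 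Alternatively, since $\ol{H(A)}$ is compact, its diameter equals that of its topological boundary, which is contained in $P_A$ together with the $\vp$-images of the boundary geodesics of $\HCD(Y_A)$; your estimate then goes through unchanged with $K=2K_0+1$. Either repair yields the same conclusion with a possibly slightly different universal constant.
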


\begin{proof}
Take points $x, y\in P_A$. By the assumptions of the lemma and
by Pommerenke \cite[Theorem 4.2]{pom92}, there exists a
constant $K'$ such that the diameter of the $f$-geodesic $G(x,
y)$ is at most $K'M$. The geodesic $G(x,y)$ is chosen to be
homotopic to one of the arcs $I(x,y)=I$ connecting $x, y$
inside $Z\cap U_\iy$ under a homotopy in $U_\iy$ fixing $x$ and
$y$. Now, given two points $u, v\in H(A)$, connect them with
the $f$-geodesic $I$ and then extend it to the $f$-geodesic
$I_1$ which connects points $u\in \bd H(A)$ and $v\in \bd
H(A)$. Choose the $f$-geodesics $I_2, I_3$ from the boundary of
$H(A)$ such that $u\in I_2$ and $v\in I_3$. Fix an endpoint of
$I_2$, an endpoint of $I_3$, and the $f$-geodesic $I_4$ which
connects them. Then by the triangle inequality and by the above
$|I|\le |I_1|\le |I_2|+|I_3|+|I_4|\le 3K'M$. So, any two points
of $H(A)$ can be connected with an $f$-geodesic of diameter at
most $3K'M$ which completes the proof.
\end{proof}

In what follows by a \emph{null-sequence} we mean that of sets
whose diameters converge to zero.

\begin{definition}\label{nul1}
Suppose that $\A$ and $\vp(\A)$ are sibling foliations. Suppose that given a
sequence of $g$-cells $A_i\in \A$ and the corresponding  sequence  $\vp(A_i)$
of $f$-cells, we have that \emph{$\{A_i\}$ is a null-sequence if and only if
$\{\vp(A_i)\}$ is a null-sequence}. Then we call the foliations $\A, \vp(\A)$
\emph{null preserving}.
\end{definition}

We can finally single out pairs of foliations we want to work with.

\begin{definition}\label{upper}
Suppose that there is a crosscut $T$ in the $g$-plane and that $\vp(T)$ is a
crosscut of $X$. Moreover, suppose that $\A, \vp(A)$ are sibling foliations of
$\sh^+(T)$ and $\sh^+(\vp(T))$ respectively which are upper-semicontinuous and
null preserving. Then we say that they form a \emph{canonic pair (of
foliations)} or just that they are \emph{canonic}.
\end{definition}

Observe that elements of foliations are always $g$-cells and $f$-cells.
If $\A, \vp(\A)$ are canonic then for distinct $g$-cells $A', A''\in \A$ only
the following cases are possible: (1) $A', A''$ have a common $g$-geodesic
in their boundaries but
are otherwise disjoint: (2) the closures of $A', A''$ have a unique point of
$S^1$ in common; (3) the closures of $A', A''$ are disjoint.  This implies that
the sets $\ol{A'}\cap S^1, \ol{A''}\cap S^1$ are \emph{unlinked}, i.e. one of
them is contained in a component of the complement to the other (except for the
endpoints). Observe, that the collection of geodesics in the boundaries of all
the $g$-cells $A\in\A$ is a \emph{lamination} of  $\sh(T)$  (cf. \cite{thu85}).

In general $\vp$ is far from being continuous near $S^1$. However the existence
of canonic foliations allows us to use a version of continuity of $\vp$
justified by Lemma~\ref{continu}. By $d(a, b)$ we understand the standard
Euclidian distance between two points $a, b\in \C$.
%Also, denote by $\A^*$ the
%set of all points which belong to the union of all sets from $\A$.

\begin{lemma}\label{continu}
Let $\A, \vp(\A)$ be canonic foliations. Then the family of restrictions
$\{\vp|_{\ol{A}}\}_{A\in \A}$ is \emph{equicontinuous}: for any $\e'$ there exists
$\da'$ such that if $x', z'\in \ol{A}$, $ A\in \A$ and $d(x', z')\le \da'$ then
$d(\vp(x'), \vp(z'))\le \e'$.
\end{lemma}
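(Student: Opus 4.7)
The plan is to argue by contradiction, leveraging the two defining properties of canonic foliations (upper-semicontinuity and null preservation). Assume equicontinuity fails: there exist $\e_0>0$, cells $A_n\in\A$, and points $x_n, z_n\in \ol{A_n}$ with $d(x_n, z_n)\to 0$ but $d(\vp(x_n), \vp(z_n))\ge \e_0$ for all $n$. After passing to a subsequence, $x_n\to x_\infty$ and $z_n\to z_\infty$, and the distance hypothesis forces $x_\infty=z_\infty$.

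I would then split into three cases according to the behaviour of the sequence $\{A_n\}$. First, if some cell $A$ appears as $A_n$ for infinitely many $n$, then since $\vp|_{\ol A}$ is a homeomorphism of a compact set (this is built into the notion of admissible Jordan disk via the strong homeomorphism of $\ol{A}$ with $\vp^{-1}(A)$), it is uniformly continuous, immediately contradicting the choice of our sequences. So one may pass to a subsequence along which the $A_n$ are pairwise distinct. Second, if $\{A_n\}$ is a null-sequence then by null preservation $\{\vp(A_n)\}$ is also a null-sequence, so $\diam \vp(\ol{A_n})\to 0$ and therefore $d(\vp(x_n), \vp(z_n))\to 0$, again a contradiction.

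The remaining case, which is the main obstacle, is when the $A_n$ are pairwise distinct and $\diam(A_n)\ge \delta>0$ for some $\delta$. Passing to a further subsequence, $\{A_n\}$ converges in the Hausdorff metric to a non-degenerate set; upper-semicontinuity of $\A$ identifies this limit as a $g$-leaf $\ell$, and upper-semicontinuity of $\vp(\A)$ gives $\vp(A_n)\to \vp(\ell)$ in the $f$-plane. Then $x_\infty\in \ell$, and choosing a cell $A^*\in \A$ with $\ell\subset \bd A^*$, the map $\vp|_{\ol{A^*}}$ is a homeomorphism, so $\vp(x_\infty)$ is a well-defined point of $\vp(\ol\ell)$. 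After one more subsequence, $\vp(x_n)\to u$ and $\vp(z_n)\to w$, both in $\vp(\ol\ell)$, with $d(u,w)\ge \e_0$.

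The heart of the argument is to show that $u=w=\vp(x_\infty)$, yielding the contradiction $0\ge \e_0$. I would do this by constructing, for each large $n$, a small admissible Jordan disk $B_n$ in the $f$-plane that contains $\vp(x_n)$ and lies essentially in $\vp(\ol{A_n})\cup \vp(\ol{A^*})$, together with a point of $\vp(\ell)$ as close as desired to $\vp(x_\infty)$. The Hausdorff convergences $A_n\to\ell$ and $\vp(A_n)\to\vp(\ell)$ then force the Mazurkiewicz-type diameter $\rho_{B_n}(P_{B_n})$ to tend to zero, and Lemma~\ref{conthyp} pinches $\diam H(B_n)\to 0$, so that $\vp(x_n)\to\vp(x_\infty)$; the symmetric argument with $z_n$ gives $w=\vp(x_\infty)$. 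The delicate part is the construction of $B_n$: one must verify simultaneously that $B_n$ is genuinely admissible (so $\vp$ extends continuously over $\bd B_n$) and that the leafwise correspondence between $\ol{A_n}$ and $\ell\subset \ol{A^*}$ furnished by upper-semicontinuity is fine enough to force the required control on $\rho_{B_n}$.
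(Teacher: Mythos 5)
Your proposal follows the paper's proof essentially step for step: argue by contradiction, dispose of the null-sequence case via null preservation, and in the remaining case use upper-semicontinuity to get $A_n\to\ell$ (a $g$-leaf) and $\vp(A_n)\to\vp(\ell)$, then derive a contradiction from $\lim x_n=\lim z_n$. The one step you flag as "delicate" --- upgrading the Hausdorff convergence $\vp(A_n)\to\vp(\ell)$ to the pointwise statement that $\vp(x_n)$ and $\vp(z_n)$ converge to the same point of $\ol{\vp(\ell)}$ --- is precisely where the paper's own proof simply asserts the conclusion, so your unfinished $B_n$-construction is not needed to match the paper's argument; if you do want to justify that implication, the localization already carried out in the proof of Lemma~\ref{cellcont} (cutting a cell near an endpoint of $\ell$ and pinching with Lemma~\ref{conthyp}) is the available tool, rather than building a new family of admissible disks from scratch.
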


\begin{proof}
By way of contradiction suppose that $x'_i, z'_i\in \ol{A_i}, A_i\in \A,$ are
two sequences with $d(x'_i, z'_i)\to 0$ (here $A_i\in \A$ is a sequence of
$g$-cells from $\A$), but $d(\vp(x'_i), \vp(z'_i))\not \to 0$. Refining the
sequences, we may assume that $\lim x'_i=\lim z'_i=a$ while $\vp(x'_i)\to x_f,
\vp(z'_i)\to z_f$ and $x_f\ne z_f$. Let us show that $A_i$ is not a
null-sequence. Indeed, if $A_i$ is a null-sequence, then $\vp(A_i)$ is a
null-sequence (since $\A, \vp(A)$ are canonic) and $d(\vp(x'_i), \vp(z'_i))\to
0$, a contradiction. Since the foliations are canonic we may assume that $A_i$
converge to a $g$-geodesic $\ell$ on the boundary of a $g$-cell. Then
$\vp(A_i)$ converge to the $f$-geodesic $\vp(\ell)$, and hence the fact that
$\lim x'_i=\lim z'_i=a$ implies that $\lim \vp(x'_i)=\lim \vp(z'_i)$, a
contradiction with $x'_f\ne z'_f$.
\end{proof}

An important example of a canonic pair of foliations comes from the
construction of the family $\Bc$ of maximal closed, round balls introduced
right before Lemma~\ref{gates} (the corresponding tools can be found in
\cite{fokkmayeovertymc07} and \cite{kulkpink94}). Let $\hg$ be the family of
all sets given by formulas $\HCD(\vp^{-1}(B\cap X))\subset \sh(C'_g),$ $B\in
\Bc$. Let the corresponding family of all sets $H(B)\subset \sh(C'_f), B\in
\Bc$ by $\hx$.

\begin{theorem}[\cite{fokkmayeovertymc07}, see also \cite{kulkpink94}]\label{foli0}
The foliations $\hg$ and $\hx$ are sibling foliations of $\sh^+(C'_g)$ and $\sh^+(C'_f)$
respectively.
\end{theorem}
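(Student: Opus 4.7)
The plan is to verify that $\hg$ is a foliation of $\sh^+(C'_g)$ in the sense of Definition~\ref{folia}; once this is done, $\hx = \vp(\hg)$ automatically provides the corresponding $f$-foliation and the two are sibling by construction, since each $g$-cell $\HCD(\vp^{-1}(B \cap X))$ is defined precisely so that its $\vp$-image is the $f$-cell $H(B)$. Two things must therefore be established: \emph{covering}---every $z \in \sh^+(C'_g)$ lies either in the interior of some element of $\hg$ or on a boundary $g$-geodesic of one---and \emph{uniqueness}---the $g$-cells are pairwise interior-disjoint, and any two distinct $g$-cells meet only along a common boundary $g$-geodesic or in at most one point of $S^1$.

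For covering I would use the Kulkarni--Pinkall maximal-disk construction. Given $z \in \sh(C'_f) \sm X$, let $r(z)$ be the supremum of radii of closed round balls $B$ with $z \in B$ and $\Int(B) \cap X = \0$; compactness of $X$ and the unshieldedness of $X$ produce a ball $B^* \in \Bc'$ attaining this supremum. If $|B^* \cap X| = 1$, slide the center of $B^*$ along the gradient of the distance to $X$ until the ball touches $X$ at a second point, obtaining a maximal ball with $|B^* \cap X| \ge 2$. Since $C'_f$ was chosen inside $\sh(E_f)$ and of small diameter, $\ch(B^* \cap X) \subset \sh(E_f)$, so $B^* \in \Bc$ and $z \in H(B^*)$. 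Points on $C'_f$ itself are handled similarly, and the corresponding statement in the $g$-plane follows by transporting through $\vp$ on accessible points.

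For uniqueness I would invoke the standard interior-disjointness lemma: distinct $B_1, B_2 \in \Bc$ have disjoint interiors, since otherwise the one-parameter family of round disks through $\bd B_1 \cap \bd B_2$ contains a strictly larger disk whose interior is still disjoint from $X$, contradicting maximality. Once the interiors are disjoint, the contact sets $B_1 \cap X$ and $B_2 \cap X$ are \emph{unlinked} along $X$: any linking would force two transverse crossings of $\bd B_1$ and $\bd B_2$ in $U_\iy(X)$, hence interior intersection. Since points of $B_i \cap X$ lie on round circles they are accessible, so their $\vp^{-1}$-images are well-defined closed unlinked subsets of $S^1$, and the hyperbolic convex hulls $\HCD(\vp^{-1}(B_i \cap X))$ are interior-disjoint and meet at most along a common boundary $g$-geodesic.

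The chief obstacle is the degenerate case when $B \cap X$ is infinite---in truly exceptional situations even containing arcs of $\bd B$---where one must verify that $\HCD(\vp^{-1}(B \cap X))$ is genuinely a $g$-cell, i.e.\ that every argument in $\ol{\vp^{-1}(B \cap X)}$ gives a ray landing at a distinct point of $X$ and that $\vp$ extends to a homeomorphism on the closure of the hull. The key point is that every $p \in B \cap X$ lies on the round circle $\bd B$ and is therefore accessible from $\sh(C'_f)$ through $\Int(B)$; distinct accessible points correspond to distinct landing angles, so $\vp$ restricts to a bijection (hence a homeomorphism, by compactness) from $\ol{\vp^{-1}(B \cap X)}$ onto $B \cap X$, making $\HCD(\vp^{-1}(B \cap X))$ a bona fide $g$-cell with $f$-counterpart $H(B)$. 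With covering, uniqueness, and this regularity in hand, $\hg$ is a foliation of $\sh^+(C'_g)$ and $\hx = \vp(\hg)$ is its sibling $f$-foliation, along the lines of \cite{fokkmayeovertymc07,kulkpink94}.
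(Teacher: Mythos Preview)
The paper does not supply its own proof of Theorem~\ref{foli0}; the result is quoted from \cite{fokkmayeovertymc07} (with origins in \cite{kulkpink94}), so there is no in-paper argument to compare against. Your outline follows the right architecture, but two of the key steps contain genuine gaps.

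\textbf{Uniqueness.} You assert that distinct maximal balls $B_1,B_2\in\Bc$ have disjoint interiors and deduce unlinkedness of the contact sets from that. The premise is false. Take $X$ to be the four vertices of a unit square: the circumscribed circle and any circle through two adjacent vertices with center on the far side of that edge are both maximal, yet their interiors overlap. What \emph{is} true---and what the Kulkarni--Pinkall/\cite{fokkmayeovertymc07} argument actually proves---is that the \emph{hulls} $\HCD(\vp^{-1}(B_i\cap X))$ have disjoint interiors. In the example just given the two hulls are compatible (one is a boundary geodesic of the other), so the foliation property survives, but not by the mechanism you propose. The genuine argument uses that $H(B)\subset B$ (this is Lemma~4.2 of \cite{fokkmayeovertymc07}, invoked later in the paper) together with a direct analysis of how a point of $U_\iy(X)$ can sit inside two maximal balls; ball-interior-disjointness plays no role.

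\textbf{Covering.} You produce, for each $z\in\sh(C'_f)$, a maximal ball $B^*\in\Bc$ with $z\in B^*$, and then write ``so $B^*\in\Bc$ and $z\in H(B^*)$.'' The inference $z\in B^*\Rightarrow z\in H(B^*)$ does not hold: $H(B^*)$ is the hyperbolic hull of $\vp^{-1}(B^*\cap X)$ and is in general a proper subset of the region of $B^*$ lying in $\sh(C'_f)$. A point of $B^*$ close to $\bd B^*$ but away from the contact set $B^*\cap X$ typically lies outside $H(B^*)$. The actual covering statement---that every $z$ lies in $H(B)$ for \emph{some} maximal $B$---is exactly the nontrivial content of \cite{fokkmayeovertymc07,kulkpink94}, and it requires one to vary the maximal ball through $z$ (or argue via the canonical Kulkarni--Pinkall metric) rather than simply exhibit one ball containing $z$.

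Your treatment of the degenerate case $|B\cap X|=\infty$ is reasonable, but it presupposes that the covering and disjointness issues above have been settled.
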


The main next step is to show that $\hg, \hx$ form a canonic pair of
foliations. The following corollary follows from \cite{fokkmayeovertymc07}.

\begin{corollary}\label{foli1} Suppose that there is a sequence $A_n$
of distinct $g$-cells from $\hg$. Then the following facts hold.

\begin{enumerate}

\item
If $A_n$ converge to a hyperbolic geodesic $\ell=\al\be$, then $\ell$ is a
$g$-leaf of $\hg$, $\vp(\ell)$ is an $f$-leaf of $\hx$, and $\ol{\vp(A_n)}$ converge
to $\ol{\vp(\ell)}$.

\item
If $A_n$ is a null-sequence, then $\vp(A_n)$ is a null-sequence.

\end{enumerate}

\end{corollary}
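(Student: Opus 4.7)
The plan is to derive both statements from the sibling-foliation structure of Theorem~\ref{foli0} together with continuity properties of the maximal-ball assignment $A\leftrightarrow B\leftrightarrow H(B)$ established in \cite{fokkmayeovertymc07,kulkpink94}. Statement (1) amounts to upper-semicontinuity of the $f$-side foliation $\hx$, while (2) is null-preservation; together these upgrade the sibling pair $(\hg,\hx)$ to a canonic pair in the sense of Definition~\ref{upper}.

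For (1), I first observe that $\hg$ is a $g$-foliation of $\sh^+(C'_g)$, and the argument given right after Definition~\ref{upper1} already shows that any foliation is automatically upper-semicontinuous in its own plane. Hence the nondegenerate limit $\ell=\al\be$ must be a $g$-leaf of $\hg$, lying on the boundary of some $A\in\hg$. Since $\hg$ and $\hx$ are sibling foliations and $\vp$ extends as a homeomorphism over $\bd A$ by the very definition of a cell, the image $\vp(\ell)$ is an $f$-geodesic on $\bd\vp(A)$, i.e., an $f$-leaf of $\hx$.

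To obtain $\ol{\vp(A_n)}\to\ol{\vp(\ell)}$ in the Hausdorff metric, I would pass to a subsequence along which the maximal balls $B_n$ associated with $A_n$ converge in the Hausdorff metric to a ball $B$. Since $\Int(B_n)\cap X=\0$, also $\Int(B)\cap X=\0$; and since $\ol{A_n}\cap S^1$ converges to a nondegenerate arc with endpoints $\al,\be$, the points of $B_n\cap X$ cluster near the landing points $\vp(\al),\vp(\be)$, forcing $|B\cap X|\ge 2$ and thus $B\in\Bc$. Then $B$ corresponds to the $g$-cell $A$ above and $\vp(\ell)\subset\bd H(B)$. The Hausdorff convergence of the degenerating $f$-cells $H(B_n)$ onto this single boundary leaf follows from Pommerenke's boundary-distortion estimate \cite[Thm~4.2]{pom92} together with the Hausdorff continuity of $B\mapsto H(B)$ proved in \cite{fokkmayeovertymc07,kulkpink94}: the $f$-geodesics in $\bd H(B_n)$ connect pairs of points of $B_n\cap X$, all lying in arbitrarily small neighborhoods of $\vp(\al)$ and $\vp(\be)$, so every such $f$-geodesic except the one approximating $\vp(\ell)$ has vanishing diameter, and $H(B_n)$ is pinched onto $\ol{\vp(\ell)}$.

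For (2), suppose $\{A_n\}$ is a null-sequence. Then the arcs $Y_{A_n}=\ol{A_n}\cap S^1$ shrink to a single angle $\ta\in S^1$, so each $g$-geodesic on $\bd A_n$ has both endpoints clustering at $\ta$. By Pommerenke \cite[Thm~4.2]{pom92} its $\vp$-image, an $f$-geodesic on $\bd H(B_n)\cap U_\iy(X)$, has Euclidean diameter tending to $0$ uniformly in $n$. This provides a bound $M_n\to 0$ on the Mazurkiewicz-type quantity $\rho_{\vp(A_n)}(x,y)$ for all $x,y\in P_{\vp(A_n)}$, and Lemma~\ref{conthyp} then yields $\diam \vp(A_n)=\diam H(B_n)\le KM_n\to 0$. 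The main obstacle is the Hausdorff convergence in (1), because $\vp$ need not extend continuously to $S^1$ and the convergence $A_n\to\ell$ cannot be pushed forward directly; the resolution passes through the rigidity of the maximal-ball foliation and Pommerenke's conformal-distortion estimate, which together force the degenerating $f$-cells to accumulate precisely on $\vp(\ell)$.
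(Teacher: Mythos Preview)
Your outline for (1) follows essentially the paper's route: pass to the associated maximal balls $B_n$, extract a convergent subsequence $B_n\to B\in\Bc$, and use the containment $\vp(A_n)\subset B_n$ (Lemma~4.2 of \cite{fokkmayeovertymc07}) together with $\vp(A_n)\cap X\to\{\vp(\al),\vp(\be)\}$ (Lemma~5.1 of \cite{fokkmayeovertymc07}) to pinch the limit onto $\ol{\vp(\ell)}$. Your phrase ``Hausdorff continuity of $B\mapsto H(B)$'' is imprecise --- it is essentially the content being proved --- but the ingredients you name are the right ones.

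Your argument for (2), however, has a real gap. You write that since the $g$-geodesics on $\bd A_n$ have both endpoints clustering at a single angle $\ta$, Pommerenke's Theorem~4.2 forces their $\vp$-images to have Euclidean diameter tending to $0$. That is not what Pommerenke's estimate says. The estimate (as invoked in Lemma~\ref{conthyp}) bounds $\diam\vp(\ell)$ by a constant times the infimum of diameters of arcs \emph{in $U_\iy(X)$} joining the landing points in the correct homotopy class. Proximity of the angles $\al_n,\be_n$ in $S^1$ gives no such bound, precisely because $\vp$ need not extend continuously to $S^1$: close angles may have far-apart landing points, and the connecting arcs in the $f$-plane may remain large. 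Your chain ``angles close $\Rightarrow$ $f$-geodesics small $\Rightarrow$ $\rho_{\vp(A_n)}$ small $\Rightarrow$ Lemma~\ref{conthyp} applies'' breaks at the first implication.

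The paper's proof of (2) avoids this by arguing by contradiction. If $\vp(A_n)$ is \emph{not} null, then Lemma~\ref{conthyp} in contrapositive form (using arcs on $\bd B_n$ to bound $\rho_{\vp(A_n)}$) forces $\diam(B_n\cap X)$ to stay bounded away from zero, so after refining, $B_n\to B\in\Bc$ nondegenerate. The containment $\vp(A_n)\subset B_n$ then gives $T=\lim\ol{\vp(A_n)}\subset B$; since here $\ell$ is degenerate there is no geodesic part, so $T\subset B\cap X\subset\bd B$ is a nondegenerate arc, and this is incompatible with $A_n\to\{\ta\}$. The point you are missing is that the maximal balls $B_n$ are what supply the small connecting arcs in the $f$-plane needed to feed Lemma~\ref{conthyp}; their diameters are controlled only indirectly via the contradiction hypothesis, not from the $g$-plane side.
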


\begin{proof}
Each of the $f$-cells $\vp(A_n)$ corresponds to a maximal ball $B_n\in\Bc$. In
case (1) the balls $B_n$ have bounded away from zero diameters and may be
assumed to converge to a non-degenerate maximal ball $B$. In case (2), by way
of contradiction and after refining the sequence, we may assume that $A_n$
converge to a point $\al=\be\in S^1$ while $\vp(A_n)$ converge to a
non-degenerate continuum. By Lemma~\ref{conthyp} the fact that the
diameters of $\vp(A_n)$ are bounded away from zero implies that diameters of
$B_n\cap \vp(A_n)$ are bounded away from zero (otherwise by Lemma~\ref{conthyp}
the diameters of $\vp(A_n)$ go to zero). Hence again the balls
$B_n$ converge to a maximal ball $B\in\Bc$. By Lemma 4.2 in
\cite{fokkmayeovertymc07}, $\vp(A_n)\subset B_n$. By Lemma 5.1 in
\cite{fokkmayeovertymc07} $\lim \vp(A_n)\cap X=\{\al, \be\}$. Hence $\lim
\ol{\vp(A_n)}=T\subset B$. So far the situation has been similar for both
cases, however now we consider them separately.

(1) From geometric considerations in the $f$-plane $T\subset
\vp(\ell)\cup\imp(\al)\cup \imp(\be)$. The fact that $\vp(A_n)\cap X$ converges
to $\{\vp(\al),\vp(\be)\}$ and $\Int(B_n)$ does not contain $\vp(\al),\vp(\be)$
implies that there is an arc $J_n\subset \bd(B)$ such that
$J_n\subset\Int(B_n)$ (thus, $J\cap X=\0$) and $\lim J_n=J$ is an arc in
$\bd(B)$ from $\al$ to $\be$. Hence $J\cap X=\{\vp(\al),\vp(\be)\}$. Let
$K=\bd(B)\sm J$ and suppose that $K$ contains a point $x\in X\sm\{\vp(\al),
\vp(\be)\}$. Let $C_n$ be the component of $B_n\sm\ol{\vp(\ell)}$ which
contains $\vp(A_n)$ (recall that $\vp(\al)$ and $\vp(\be)$ are outside the
interior of $B_n$). Then it is easy to observe that there exists $\e>0$ such
that $d(x, C_n)>\e$. Hence $\lim \vp(A_n)=\ol{\vp(\ell)}$.

(2) By the above and by the conditions of the lemma $T\subset B\cap X$. Since
$T$ is a continuum, it follows that $T\subset \bd B$ is a non-degenerate arc.
Clearly, this contradicts the act that $A_n$ is a null-sequence.
\end{proof}

In Theorem~\ref{foli} we summarize the above results.

\begin{theorem}[\cite{fokkmayeovertymc07}, see also \cite{kulkpink94}]\label{foli}
The sibling foliations $\hg$ and $\hx$ form a canonic pair.
\end{theorem}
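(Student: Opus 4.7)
The plan is to assemble the pieces already established: Theorem~\ref{foli0} gives that $\hg$ and $\hx$ are sibling foliations of $\sh^+(C'_g)$ and $\sh^+(C'_f)$, so it remains to verify the two extra properties in Definition~\ref{upper}, namely upper-semicontinuity of both foliations and the null-preserving property. Corollary~\ref{foli1} is designed to do exactly this, so the proof should really be a short assembly argument.

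First I would handle upper-semicontinuity. For $\hg$ itself this is almost automatic, as indicated in the paragraph following Definition~\ref{upper1}: a sequence of distinct $g$-cells $A_n \in \hg$ whose limit is not a point must limit onto some hyperbolic geodesic $\ell \subset \ol{\idisk}$, since the $g$-cells are pairwise disjoint in $\idisk$ (their interiors are disjoint and, being hyperbolically convex, convergence forces the limit to be a geodesic). Moreover $\ell$ has to be a $g$-leaf of $\hg$: otherwise an interior point of $\ell$ lies in $\sh^+(C'_g)$ but does not belong to the boundary geodesic of any $A \in \hg$, and by the foliation property that point lies in the interior of some $A' \in \hg$; such an $A'$ would then intersect the $A_n$ for large $n$, contradicting the disjointness in $\hg$. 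To pass this to $\hx$, apply Corollary~\ref{foli1}(1): in the situation $A_n \to \ell$, the corresponding $f$-cells satisfy $\ol{\vp(A_n)} \to \ol{\vp(\ell)}$, and $\vp(\ell)$ is an $f$-leaf of $\hx$, which is exactly what upper-semicontinuity of $\hx$ requires.

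Next I would check null preservation. One direction, namely $\{A_n\}$ a null-sequence implies $\{\vp(A_n)\}$ a null-sequence, is precisely Corollary~\ref{foli1}(2). For the reverse direction, suppose $\{\vp(A_n)\}$ is a null-sequence but $\{A_n\}$ is not. After passing to a subsequence we may assume $\diam(A_n)$ stays bounded away from zero, so $\{A_n\}$ has a non-degenerate limit in the Hausdorff topology. By the argument of the previous paragraph this limit is a $g$-leaf $\ell$ of $\hg$, and by Corollary~\ref{foli1}(1) we get $\ol{\vp(A_n)} \to \ol{\vp(\ell)}$. But $\vp(\ell)$ is an $f$-geodesic, hence a non-degenerate arc joining two distinct accessible points of $X$, so $\diam(\vp(A_n))$ is bounded below by a positive constant for all large $n$. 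This contradicts the assumption that $\{\vp(A_n)\}$ is a null-sequence and completes the argument.

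The only potentially subtle point is the reverse direction of null preservation, which is the one that genuinely uses the geometric content of Corollary~\ref{foli1}(1); everything else is essentially bookkeeping about the foliation structure. There is no new construction needed, so I expect the proof to be only a few lines once the three observations above are recorded.
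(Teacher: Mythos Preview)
Your proposal is correct and follows essentially the same route as the paper: invoke the remark after Definition~\ref{upper1} for upper-semicontinuity of $\hg$, use Corollary~\ref{foli1}(1) to transfer this to $\hx$, use Corollary~\ref{foli1}(2) for one direction of null preservation, and argue the reverse direction by contradiction via Corollary~\ref{foli1}(1). The paper's proof differs only cosmetically in that, for upper-semicontinuity of $\hx$, it starts from a convergent sequence of $f$-cells and first invokes Corollary~\ref{foli1}(2) (in contrapositive form) to bound the $g$-cell diameters away from zero before refining and applying Corollary~\ref{foli1}(1); you may want to phrase that step in the same order, but the content is identical.
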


\begin{proof}
By the remark after Definition~\ref{upper1}, $\hg$ is upper-semicontinuous.
Suppose that $\vp(A_n)$ is a convergent sequence of distinct $f$-cells whose
limit is not a point. By Corollary~\ref{foli1}(2) the diameters of $A_n$ are
bounded away from zero. Hence we may assume that $A_n$ converge to a $g$-leaf
of $\hg$. Now by Corollary~\ref{foli1}(1) $\vp(A_n)$ converge to an $f$-leaf of
$\hx$ and $\hx$ is upper-semicontinuous. On the other hand, if $\vp(A_n)$ is a
null-sequence of $f$-cells in $\hx$ while $A_n$ is not a null-sequence of
$g$-cells, then, after refining $A_n$ and by Corollary~\ref{foli1}(1), we may
assume that $A_n$ converge to a $g$-leaf $\ell$ and $\vp(A_n)$ converge to its
non-degenerate image $\vp(\ell)$, a contradiction.
\end{proof}

Let us state a useful corollary of results of \cite{fokkmayeovertymc07}.

\begin{corollary}\label{useful} Let $\xi\in (\al', \ga')\subset S^1$ be an
angle which is not an endpoint of a $g$-leaf of $\hg$. Then there exists a
sequence $B_i$ of $R_{\xi}$-essential balls from $\Bc$ with radii converging to
zero and such that $\sh(C'_f)\supset \sh(C_{H(B_1)})\supset
\sh(C_{H(B_2)})\supset \dots$ and $\HCD(\vp^{-1}(B_i\cap X))\to \{\xi\}$.
\end{corollary}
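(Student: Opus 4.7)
The plan is to walk along the radial ray $R'_\xi = \vp^{-1}(R_\xi)$ in the $g$-plane toward the point $e^{2\pi i\xi}\in S^1$, using the canonic foliation $\hg$ from Theorem~\ref{foli} together with the hypothesis that $\xi$ is not a $g$-leaf endpoint to extract a shrinking sequence of cells. First I would pick points $p_n = r_n e^{2\pi i\xi}$ with $r_n\downarrow 1$, so that eventually $p_n\in\sh^+(C'_g)$ and $p_n\to\xi$; by Theorem~\ref{foli0}, each $p_n$ lies either in the interior of a unique $g$-cell $A_n\in\hg$ or on a unique boundary $g$-leaf of one, in which case I take an adjacent cell as $A_n$.

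The heart of the argument is to refine to a sequence of pairwise distinct cells $A_n$ with $A_n\to\{\xi\}$. If some single cell $A$ contained infinitely many of the $p_n$, then $\xi\in\ol{A}\cap S^1=Y_A$; the hypothesis (no gap of $Y_A$ has $\xi$ as an endpoint, since such endpoints coincide with endpoints of boundary $g$-leaves of $A$) would force $\xi$ to be a two-sided accumulation point of $Y_A$, a configuration which in our setting is ruled out (or alternatively handled by using, in place of $A$, the nearby cells of $\hg$ produced in the shrinking crescent regions cut off by the small gap-leaves of $Y_A$ near $\xi$). Once the cells are pairwise distinct, Corollary~\ref{foli1}(1) shows that any convergent subsequence of $\{A_n\}$ has as its limit either a point or a single $g$-leaf; the latter would necessarily have $\xi$ as an endpoint, since $p_n\in A_n$ and $p_n\to\xi$, contradicting the hypothesis. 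Hence $A_n\to\{\xi\}$.

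Each $A_n$ arises as $\HCD(\vp^{-1}(B_n\cap X))$ for a unique $B_n\in\Bc$. Since $(\hg,\hx)$ is canonic and hence null-preserving, the $f$-cells $H(B_n)=\vp(A_n)$ form a null sequence converging to $\{\vp(\xi)\}$ in the $f$-plane; combined with $\vp(A_n)\subset B_n$ (Lemma~4.2 of \cite{fokkmayeovertymc07}) and the maximality condition $\Int B_n\cap X=\emptyset$, this forces the radii of the $B_n$ to tend to zero as well. Because $R'_\xi$ crosses the outer boundary geodesic of each $A_n$ transversely once, the corresponding $f$-crosscut $C_{H(B_n)}$ is $R_\xi$-essential, so each $B_n$ is an $R_\xi$-essential ball; and since $A_n\to\{\xi\}$, passing to a further subsequence achieves the nested shadows $\sh(C'_f)\supset\sh(C_{H(B_1)})\supset\sh(C_{H(B_2)})\supset\cdots$. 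I expect the main subtle point to be excluding (or handling) the ``trapped in one cell'' configuration described in the second paragraph, since that is precisely where the hypothesis that $\xi$ is not an endpoint of a $g$-leaf is doing its essential work.
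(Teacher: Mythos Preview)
The paper does not supply its own proof of this corollary; it simply records the statement as a consequence of results in \cite{fokkmayeovertymc07}. So there is no in-paper argument to compare against, and your proposal stands or falls on its own.

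Your overall strategy is the natural one and is correct in its main line: follow $r_\xi$ toward $\xi$, collect the $g$-cells of $\hg$ that it meets, use upper-semicontinuity (Corollary~\ref{foli1}) to force any non-null subsequential limit of distinct cells to be a $g$-leaf with $\xi$ as an endpoint (excluded by hypothesis), and then translate back to balls via the canonic-pair machinery. The deduction of $R_\xi$-essentiality, shrinking radii, and nested shadows from $A_n\to\{\xi\}$ is fine.

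The gap is precisely where you place it, but neither of your two proposed resolutions closes it. First, the ``trapped in one cell'' configuration is \emph{not} ruled out by anything available here: the paper explicitly allows $B\cap X$ to contain arcs of $\partial B$, and in such a case $Y_A$ can have $\xi$ as a two-sided accumulation point while $\xi$ is an endpoint of no $g$-leaf whatsoever, so the hypothesis is satisfied and yet the tail of $r_\xi$ lies in the single cell $A$. Second, your alternative---passing to the small cells in the crescent regions cut off by nearby gap-leaves of $Y_A$---does yield null sequences of cells converging to $\xi$, but each such cell has its $Y$-set entirely on one side of $\xi$; consequently $\xi\notin I_{A'}$, the ray $r_\xi$ never crosses $C_{A'}$, and the associated ball is \emph{not} $R_\xi$-essential. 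In fact, in the trapped case the cells of $\hg$ whose $I$-arc contains $\xi$ are exactly the cells $r_\xi$ traverses before entering $A$, and those do not shrink to $\{\xi\}$. So the foliation by itself cannot produce the required infinite shrinking sequence of $R_\xi$-essential balls in this case; one needs either an argument specific to maximal round balls (from \cite{fokkmayeovertymc07}) excluding the two-sided accumulation configuration, or a direct construction of small $R_\xi$-essential balls in $\Bc$ that does not pass through the cell a given point of $r_\xi$ happens to lie in.
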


By the above, the foliations $\hg$ and $\hx$, induced by maximal balls, form a
canonic pair of foliations. The next lemma allows us to extend this result to a
more general situation. However first we need to define the \emph{Mazurkiewicz
metric} $\rho$. Consider the set $G(X)=G$ of all pairs $(\vp(\al), \al)$ such
that ray $R_\al$ lands on $\vp(\al)$. Given $t=(\vp(\al), \al)$ and
$w=(\vp(\be), \be)$, define $\rho(t, w)$ as follows. If $t=w$ set $\rho(t,
w)=0$. If $t\ne w$, let $\rho(t, w)$ be the infimum of the diameters of open
arcs $J$ in $U_\iy$ whose closures are closed arcs from $\vp(\al)$ to
$\vp(\be)$ or simple closed curves containing $\vp(\al)=\vp(\be)$ such that
$\vp^{-1}(J)$ is homotopic to the geodesic $\al\be\in \idisk$ under a
homotopy fixing $\al, \be$.

\begin{lemma}\label{cellcont}
Let $T$ be a $g$-geodesic and $\vp(T)$ be an $f$-geodesic. Let $\A, \vp(\A)$ be
sibling foliations of\, $\sh^+(T), \sh^+(\vp(T))$ respectively. Consider a
sequence $A_i$ of distinct $g$-cells of $\A$ which converge to a (degenerate)
$g$-leaf $\ell$ such that $\ol{\vp(A_i)}\cap X\to \ol{\vp(\ell)}\cap X$ in the
Mazurkiewicz metric. Then the closed $f$-cells $\ol{\vp(A_i)}$ converge to the
closed (degenerate) $f$-leaf\, $\ol{\vp(\ell)}$.
\end{lemma}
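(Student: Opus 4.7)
My plan is to control the boundary $f$-geodesics of the cells $\vp(A_i)$ by using the Mazurkiewicz hypothesis together with the Pommerenke estimate invoked in the proof of Lemma~\ref{conthyp}, and then to assemble this into Hausdorff convergence of the closed $f$-cells. Write $\ell = \al\be$ with $\al,\be \in \uc$ (allowing $\al=\be$ for a degenerate leaf). Since $\ell$ is a $g$-leaf whose $\vp$-image is an $f$-leaf, the rays $R_\al, R_\be$ land and $\ol{\vp(\ell)} \cap X = \{\vp(\al), \vp(\be)\}$. From $A_i \to \ell$ inside $\ol{\idisk}$, we have $Y_{A_i} \to \{\al, \be\}$ Hausdorff-wise on $\uc$. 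The boundary of $A_i$ inside $\idisk$ consists of $g$-geodesics across the components of $\uc \sm Y_{A_i}$; their $\vp$-images form the $f$-geodesic portion of $\bd \vp(A_i)\cap U_\iy$.

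Let $\e_i$ denote the Mazurkiewicz Hausdorff distance from $\vp(Y_{A_i})$ to $\{\vp(\al), \vp(\be)\}$; by hypothesis $\e_i \to 0$. I partition the boundary $g$-geodesics of $A_i$ into two classes and bound their $\vp$-images by the Pommerenke estimate: if $p,q \in X$ can be joined by an arc in $U_\iy$ of diameter at most $M$ in the correct homotopy class, then the $f$-geodesic joining $p,q$ has Euclidean diameter at most $K'M$ (see the proof of Lemma~\ref{conthyp}). For \emph{short} boundary geodesics, whose endpoints $\vp(x), \vp(y)$ are both Mazurkiewicz-close to the same $\gamma \in \{\vp(\al),\vp(\be)\}$, the triangle inequality yields $\rho(\vp(x), \vp(y)) \le 2\e_i$, so the associated $f$-geodesic has Euclidean diameter at most $2K'\e_i$ and shrinks onto $\gamma$. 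For \emph{outer} boundary geodesics, whose endpoints are Mazurkiewicz-close to $\vp(\al)$ and $\vp(\be)$ respectively, I concatenate short Mazurkiewicz arcs at either end with an arc in $U_\iy$ hugging $\vp(\ell)$ to construct a competing path in the correct homotopy class of diameter close to $\diam \vp(\ell)$. The Pommerenke bound then forces the associated $f$-geodesics to lie in a thin neighborhood of $\vp(\ell)$, and uniqueness of the $f$-geodesic in a prescribed homotopy class with prescribed endpoints (together with convergence of the endpoints) forces any subsequential limit to equal $\vp(\ell)$.

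Assembling the pieces yields Hausdorff convergence $\ol{\vp(A_i)} \to \ol{\vp(\ell)}$. Each point of $\ol{\vp(A_i)}$ lies either on a short boundary $f$-geodesic (hence close to $\vp(\al)$ or $\vp(\be)$), on an outer boundary $f$-geodesic (hence close to $\vp(\ell)$), or in the interior region enclosed by the outer geodesics, which collapses onto $\vp(\ell)$ as those geodesics do; conversely, the outer $f$-geodesics tend to $\vp(\ell)$, so every point of $\ol{\vp(\ell)}$ is accumulated. The degenerate case $\al=\be$ is automatic, since then only short boundary geodesics occur. The principal obstacle is justifying the uniqueness/convergence statement for outer $f$-geodesics: one must verify that the homotopy class into which the competing concatenated path is placed is precisely the one determined by the $\vp$-correspondence with the $g$-geodesics in $\bd A_i \cap \idisk$, and then invoke Pommerenke-continuity to upgrade the mere diameter bound to genuine Hausdorff convergence to $\vp(\ell)$.
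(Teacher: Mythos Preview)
Your degenerate case and your treatment of the short boundary geodesics are correct and parallel the paper. The gap you yourself flag, however, is genuine: the Pommerenke bound on an outer $f$-geodesic only yields a diameter estimate (roughly $K'\cdot\diam\ol{\vp(\ell)}$), not proximity to $\vp(\ell)$, and invoking ``uniqueness plus endpoint convergence'' is not a proof without a separate argument about how $\vp$ behaves near $\uc$. A subsequential Hausdorff limit of the outer $f$-geodesics could a priori pick up pieces of $\imp(\al)$ or $\imp(\be)$, and ruling that out is precisely the content of the lemma. The same difficulty infects your ``interior collapses'' step: the interiors $\Int A_i$ approach $\uc$ near $\al$ and $\be$, so continuity of $\vp$ cannot be invoked on them directly.

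The paper sidesteps both issues by decomposing the \emph{cell} rather than its boundary. One fixes auxiliary $g$-geodesics $\al_i\al$ and $\be\be_i$ (with $\al_i,\be_i$ the extreme points of $\ol{A_i}\cap\uc$ on the appropriate side) and cuts $A_n$, for $n\ge i$, into three pieces: two ``ears'' $T_n,L_n$ near $\al,\be$ and a middle piece $M_n$. The middle piece lies between $\al_i\al$ and $\be\be_i$, hence stays in a region where $\vp$ is continuous, so $\vp(M_n)\to\vp(\ell)$. Each ear sits inside a hyperbolic convex hull $Q^\al_n$ (respectively $Q^\be_n$) whose $X$-vertices are pairwise within Mazurkiewicz distance $2\e$ by hypothesis, so Lemma~\ref{conthyp} gives $\diam\vp(T_n),\diam\vp(L_n)\le 2K\e$. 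Letting $\e\to 0$ finishes. This cutting trick handles the outer geodesics and the interior simultaneously and is exactly the missing ingredient in your argument; filling your gap honestly would amount to reproducing it.
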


\begin{proof}
Consider first the case when $\ell=\ta$ is a degenerate $g$-geodesic. By the
conditions of the lemma it follows that Lemma~\ref{conthyp} applies to
$\vp(A_i)$. Thus, the diameters of $\vp(A_i)$ converge to zero and $\vp(A_i)$
converge to $\vp(\ta)$ as required.

Suppose now that $\ell=\al\be$ and $\al\ne \be$. Clearly, $\vp(\ell)\subset
\limsup \vp(A_i)$. Partition the sequence $A_i$ into two sequences which
converge to $\ell$ from opposite sides (in $\idisk$) and consider them
separately. Without loss of generality assume that there are $\al_i, \be_i\in
\ol{A_i}\cap S^1$ such that $\al_i<\al<\be<\be_i$ and there are no points of
$A_i\cap S^1$ between $\al_i$ and $\al$ and between $\be$ and $\be_i$. Let
$\al_i\al$ be the $g$-geodesic connecting $\al_i$ and $\al$. Define the
\emph{left wing} $\imp^-(\al)=\bigcap \ol{\sh(\vp(\al\al_i))}$ of the
impression $\imp(\al)$; define the \emph{right wing} $\imp^+(\be)$ of the
impression $\imp(\be)$ similarly.  Then it is easy to see that $\limsup
\vp(A_i)\subset \vp(\ell)\cup \imp^-(\al)\cup \imp^+(\be)$.

It follows from the conditions of the lemma and Theorem 4.2 from \cite{pom92}
that $\vp(\al_i\al)\to \vp(\al)$ and $\vp(\be_i\be)\to \vp(\be)$. Choose
$\e>0$. Choose $i$ such that for any $n\ge i$ we have the following: (1)
$\diam(\vp(\al_i\al))+\diam(\vp(\be\be_i))<\e$, and (2) denoting the set
$\vp(\al_i, \al)\cap A_n$ by $A^\al_n$, we have that the $\rho$-distance
between the set $\vp(A^\al_n)$ and $\vp(\al)$ is less than $\e$ and similarly
for $\be$ and the similarly defined sets $A^\be_n$. The set $A_n\sm [\al\al_i
\cup \be\be_i]$ consists of three components. Let $T_n$ be the one of them
containing $A^\al_n$, $L_n$ be the one of them containing $A^\be_n$, and $M_n$
be the remaining third component. Then $\lim M_n=\ell$ and $\lim
\vp(M_n)=\vp(\ell)$.

Consider the hyperbolic convex hull $Q^\al_n$ of the points of $A^\al_n, \al_i,
\al$. Then $T_n\subset Q^\al_n$. By the choice of $i$ any two points of
$\vp(Q^\al_n)\cap X$ can be joined by an arc in $U_\iy$ of diameter less than
$2\e$. Then by Lemma~\ref{conthyp} $\diam(\vp(T_n))\le
\diam(\vp(Q^\al_n))\le 2K\e$. Hence $\lim \vp(T_n)=\{\vp(\al)\}$. Similarly,
$\lim \vp(L_n)=\{\vp(\be)\}$. Thus, $\lim \vp(A_n)=\vp(\ell)$ as desired.
\end{proof}

Observe that Lemma~\ref{cellcont} gives another proof of Corollary~\ref{foli1}.
Indeed, let $A_n$ be a sequence of distinct $g$-cells of $\hg$ taken
from Corollary~\ref{foli1}. As in the initial part of the proof of that
corollary, we may assume that $A_n$ converge to a $g$-leaf $\al\be$ and
the corresponding maximal balls $B_n$, associated with
$f$-cells $\vp(A_n)$, have bounded away from zero radii and converge to a non-degenerate
ball $B$. Then it follows from the geometric considerations that the $\rho$-distance
between two ``clusters'' of $\vp(A_n)\cap B_n=\vp(A_n)\cap X$ and the corresponding
points $\al$ or $\be$ goes to zero. By Lemma~\ref{cellcont}, this implies that
$\ol{\vp(A_n)}\to \ol{\vp(\al\be)}$ as desired.

Denote by $\mathcal{G}$ the family of all $g$-leaves in $\hg$; denote the union
of all such leaves by $\G^*$. Note that if $B\in \Bc$ and $|P_B|=|B\cap X|=2$,
then $\HCD(\vp^{-1}(B\cap X))\in \G$ and $\HCD(\vp^{-1}(B\cap X))$ is a
$g$-leaf. A set $\HCD(B)$ which is not a $g$-leaf is said to be a
\emph{$g$-gap}. Thus, if $|P_B|=|B\cap X|\ge 3$, then $\HCD(\vp^{-1}(B\cap X))$
is a $g$-gap and all geodesics in its boundary are $g$-leaves The $\vp$-images of
$g$-leaves are called \emph{$f$-leaves}. Their entire family is denoted by
$\G_f$. The $\vp$-images of $g$-gaps are called \emph{$f$-gaps.} The
$\vp$-image of $\G^*$ is denoted by $\G^*_f$.

We will show in Theorem~\ref{hom} that we can modify the map $f$ to a branched
covering map $f^*$ such that
$f^*|_{R_\beta\cap \sh(C)}:R_\beta\cap \sh(C)\to R_\beta$ is an embedding for
some $R_\beta$-essential crosscut $C$.

\begin{theorem}\label{hom}
There exists a $R_\beta$-essential crosscut $C$ and a branched covering map
$f^*:\C\to\C$ of degree $-2$ such that  $f^*|_{{\small \C}\sm \sh(C)}=f|_{\C\sm
\sh(C)}$, $f^*|_{\sh(C)}:\sh(C)\to f(\sh(C))$ is a homeomorphism with
$f^*(R_\beta\cap \sh(C))=R_\beta\cap \sh(f(C))$, and there exists $x_0\in
R_\beta$ such that for all $x<_\be x_0$,  $f^*(x)>_\be x$.
\end{theorem}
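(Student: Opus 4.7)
The plan is to choose a small essential crosscut $C\subset\sh(C'_f)$ for which $f$ sends $p=R_\beta\cap C$ to $q=R_\beta\cap f(C)$, and then post-compose $f$ inside $\sh(C)$ with a boundary-fixing homeomorphism $H$ of $\sh(f(C))$ that straightens the image arc to coincide with $R_\beta\cap\sh(f(C))$ and installs the required repelling dynamics along the ray.

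First I would apply Corollary~\ref{useful} at the angle $\beta$ to obtain an essential crosscut $C\subset\sh(C'_f)$ of diameter less than the $\varepsilon$ of Lemma~\ref{notau}. Lemma~\ref{notau}(2) then gives that $f|_{\sh(C)}\colon\sh(C)\to\sh(f(C))$ is a homeomorphism, and Theorem~\ref{g}(3) applied to $\vp^{-1}(C)$ shows that $\vp^{-1}(f(C))$ separates $\vp^{-1}(C)$ from $\infty$ in $\idisk$, so $f(C)\subset\sh(C)$ and hence $\sh(f(C))\subsetneq\sh(C)$, with $q<_\beta p$. Continuity of any candidate $f^*$ at $p$ will force $f(p)=q$, for otherwise the limit of $f^*(x)$ along $R_\beta\cap\sh(C)$ as $x\to p$ would equal $f(p)\in f(C)\sm R_\beta$, which cannot lie in $\ol{R_\beta\cap\sh(f(C))}$. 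To secure this alignment I would use that $g(\beta)=\beta$ by Theorem~\ref{g}(1), so $f(R_\beta)$ accumulates on $X$ through the same prime end $\beta$ as $R_\beta$ itself; consequently $R_\beta$ meets $f^{-1}(R_\beta)$ arbitrarily close to $X$, and picking such an intersection point $p$ together with an associated essential crosscut $C$ from the sequence of Corollary~\ref{useful} produces a $C$ with $f(p)=q$.

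With such $C$ fixed, both arcs $f(R_\beta\cap\sh(C))$ and $R_\beta\cap\sh(f(C))$ lie in the simply connected domain $\sh(f(C))$, share the common endpoint $q$ in their closures on $f(C)$, and approach the same prime end $\beta$ on the $X$-side. I would then construct a homeomorphism $H\colon\ol{\sh(f(C))}\to\ol{\sh(f(C))}$ fixing $\partial\sh(f(C))$ pointwise and carrying the first arc onto the second. The parametrization of $H$ along the arc can be chosen so that for all $x\in R_\beta$ sufficiently close to $X$, $(H\circ f)(x)$ lies strictly farther from $X$ than $x$ in the $<_\beta$ order; the strict inclusion $R_\beta\cap\sh(f(C))\subsetneq R_\beta\cap\sh(C)$ leaves ample slack to achieve this near $X$, and $x_0$ may be taken to be $q$. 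Setting $f^*=H\circ f$ on $\sh(C)$ and $f^*=f$ on $\C\sm\sh(C)$ then yields a continuous map (since $H$ is the identity on $\partial\sh(f(C))$) which is a homeomorphism of $\sh(C)$; choosing $C$ so that $c\notin\sh(C)$ (as in Lemma~\ref{notau}(2)) ensures $f^*$ is a branched covering of $\C$ of degree $-2$ with all properties as stated.

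The main obstacle is the construction of $H$. The boundary $\partial\sh(f(C))$ is the smooth Jordan arc $f(C)$ glued to an arc in the indecomposable continuum $X$, which is wild, so a classical Sch\"onflies-type extension is not directly available. This is where the ``new sufficient condition for extending a function from the boundary of a domain over the domain'' promised in the introduction must be invoked; in concert with the canonic foliation structure of Theorem~\ref{foli} and the equicontinuity of $\vp$ on foliation cells (Lemma~\ref{continu}), it should yield the requisite control of $H$ near $X$. A secondary technicality is the alignment $f(p)=q$: the cleanest route may be to carry out the modification in the $g$-plane, where $g(\beta)=\beta$ and the flip/contraction properties of $g$ from Theorem~\ref{g} make the alignment transparent, and then transport back via $\vp$ using the canonic pair of foliations $\hg,\hx$.
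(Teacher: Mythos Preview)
Your plan has the right shape, and your final paragraph correctly senses that the construction must be carried out in the $g$-plane via the canonic foliations. But the proposal as written has a genuine gap precisely where you flag it: you never produce the boundary-fixing homeomorphism $H$, and no Sch\"onflies-type argument will give it to you, because the portion of $\partial\sh(f(C))$ contained in $X$ is all of $X$ (since $\Pi(\be)=X$) and both arcs you wish to exchange accumulate on every point of $X$. The paper does \emph{not} fill this gap by building such an $H$. Instead it defines the modified map $g^*$ directly, cell by cell on the foliation $\hg$: each $g$-cell $A\in\hg$ is sent homeomorphically onto $Z_A=\HCD(g(\ol A\cap S^1))$, with $g^*=g$ on $\ol A\cap S^1$ and $g^*(r_\be\cap A)=r_\be\cap Z_A$. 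The cells $\{Z_A\}$ constitute a \emph{new} foliation $\hg^*$, and the key step you are missing is Claim~1 of the proof: the pair $\hg^*,\vp(\hg^*)$ is again canonic (established via Lemmas~\ref{conthyp} and~\ref{cellcont}). With \emph{both} the source pair $\hg,\hx$ and the target pair $\hg^*,\vp(\hg^*)$ canonic, continuity of $f^*=\vp\circ g^*\circ\vp^{-1}$ at points of $X$ follows: for $x_i\to x\in X$ one locates $x_i$ in $f$-cells $L_i\in\hx$; if $\diam(L_i)\to 0$ the null-preserving property pushes this through all four foliations to give $\diam(f^*(L_i))\to 0$, and otherwise the $L_i$ converge to an $f$-leaf and the equicontinuity of Lemma~\ref{continu} applies. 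This cell-by-cell transfer between two canonic foliations is the ``new sufficient condition'' the introduction advertises; it is not recoverable from the sketch you give.

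Two smaller corrections. Your shadow inclusion is reversed: Theorem~\ref{g}(3) says $g(\vp^{-1}(C))$ separates $\vp^{-1}(C)$ from $\infty$, hence $C\subset\sh(f(C))$ and $\sh(C)\subsetneq\sh(f(C))$, so $q>_\be p$ (the outchannel pushes points \emph{toward} $\infty$, as the conclusion $f^*(x)>_\be x$ requires). And the alignment issue $f(p)=q$ on which you spend effort is sidestepped entirely in the paper: $g^*$ is first built on $\sh(\vp^{-1}(C_{H(B_1)}))$ and then extended homeomorphically across the strip between $C_{H(B_1)}$ and $C'_f$ so as to agree with $f$ on $C'_f$; no intersection of $R_\be$ with $f^{-1}(R_\be)$ is ever invoked.
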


\begin{proof}
The first step in the proof is to refine the sequence $B_i$ from
Corollary~\ref{useful} so that it has the following property. Consider $H(B_1)$
and the crosscut $C_{H(B_1)}$, ``farthest away from $X$'' among crosscuts from
$\bd H(B_1)$. Let $I_{H(B_1)}=I_{B_1}=(\al, \ga)\subset S^1$. By
Corollary~\ref{useful}, we may choose $B_1$ so that the following holds. Denote
the endpoints of $C_{H(B_1)}$ by $p_1$ and $q_1$. Choose $B_1$ so that both
$C_{H(B_1)}$ and $f(C_{H(B_1)})$ are contained in $\sh(C'_f)$. In addition
choose it so that the $f$-geodesic joining $f(p_1), f(q_1)$ is also contained
in $\sh(C'_f)$.

Let $Q=\sh(C_{H(B_1)})$. It will also be useful to consider $\vp^{-1}(Q)$.
Clearly, $\bd \vp^{-1}(Q)$ is the union of $\vp^{-1}(C_{H(B_1)})$ and the arc
$I_{B_1}=(\al, \ga)\subset S^1$. By Theorem~\ref{foli}
the restrictions of the foliations $\hg$ and $\hx$ to $\vp^{-1}(Q)$ and $Q$
form a canonic pair. For simplicity, we still denote them $\hg$ and $\hx$.

The idea is to modify the map $g$ so that a tail of the radial ray
$\vp^{-1}(R_\be)=r_\be$ is fixed and then transport it back to the
$f$-plane. The new map $g^*$ will coincide with $g$ on $S^1$. The modification
of $g$ takes place inside $\sh(C'_g)$ while the corresponding
modification of $f$ takes place in $\sh(C'_f)$. We construct $g^*$ in a few
steps. First, we construct a foliation whose elements are the future images
(under the map $g^*$ when it is defined) of the associated elements of $\hg$.
Then $g^*$ is defined inside elements of $\hg$ so that it satisfies the
standard continuity and extension conditions and keeps a tail of $R_\be$
invariant as desired.

Elements of the new foliation are associated to elements of $\hg$ as follows.
Observe that by Theorem~\ref{g} the map $g$ on $(\al, \ga)\subset S^1$ is a
homeomorphism with a fixed repelling point $\be$; the map $g$ flips points
around $\be$. Therefore the fact that all sets $\ol{A}\cap S^1, A\in \hg$ are
unlinked implies that the sets $g(\ol{A}\cap S^1), A\in \hg$ are unlinked too.
Denote the convex hull of the set $g(\ol{A}\cap S^1)$ in the hyperbolic metric
in $\disk^\iy$ by $Z_A$. We conclude that the sets $Z_A$ form a foliation
$\hg^*$ of the set $\sh(K)$ where $K$ is the $g$-geodesic in $\idisk$ connecting
points $g(\al), g(\ga)$.

For angles $u, v\in S^1$ let $uv$ be the $g$-geodesic connecting $u$ and $v$.
We want to define a homeomorphic extension $g^*$ from $\sh(\al\ga)$ to
$\sh(g(\al)g(\ga))$. We do this so that, for each $g$-cell $A$, $g^*$ maps
$A\in \hg$ onto $Z_A$ as an orientation preserving homeomorphism and coincides
with $g$ on $\ol{A}\cap S^1$. In particular, a boundary $g$-geodesic of $A$
maps to the corresponding boundary $g$-geodesic of $Z_A$ so that the endpoints
are mapped as the map $g$ prescribes. We can then extend this map over all
gaps by mapping the barycenter of each gap to the barycenter of the image gap
and subsequently ``coning'' the map on the gap (see \cite{overtymc07}).

Let $A\cap r_\be\ne \0$. By Theorem~\ref{g} then $Z_A$ intersects $r_\be$
farther away from $S^1$ than $A$ in the sense of the order on $r_\be$. It is
easy to see that then $g^*$ can be designed so that in addition to the above we
have $g^*(r_\be\cap A)=r_\be\cap Z_A$. By Theorem~\ref{foli} the entire tail of
$r_\be$ inside $\vp^{-1}(Q)$ is covered by the sets $A\cap r_\be, A\in \hg$,
hence the new map $g^*$ maps $\vp^{-1}(Q)$ onto $\sh(K)$ so that $g^*(r_\be
\cap \vp^{-1}(Q))=r_\be\cap \sh(K)$. Since by Theorem~\ref{g} the map $g$ is
continuous, the newly constructed map $g^*$ can be constructed to be continuous
on $\ol{\vp^{-1}(Q)}$. Clearly, $g^*$ is a homeomorphism. The next claim is
crucial for the proof of the theorem.

\vskip .1in

\noindent \textbf{Claim 1.} \emph{The foliations $\hg^*, \vp(\hg^*)$ of $\sh(K)$
are canonic.}

\vskip .1in

\noindent\emph{Proof of Claim 1.} Let $A$ be a $g$-cell. Let us show that
$\vp(\ol{Z_A})$ and $\ol{Z_A}$ are homeomorphic. Indeed, by the above $g^*|_A$
is a homeomorphism from $A$ to $Z(A)$. Since $\hg$ and $\hx$ are a canonic
pair, $A$ and $\vp(A)$ are strongly homeomorphic. By Lemma~\ref{notau}
$f(\ol{\vp(A)})$ and $\vp(\ol{A})$ are homeomorphic. This implies that $\vp(\ol{Z_A})$ and
$\ol{Z_A}$ are homeomorphic to each other and to $\ol{A}$ and $\ol{\vp(A)}$.

Hence $\hg^*, \vp(\hg^*)$ are sibling foliations.
Moreover, by the remark right after Definition~\ref{upper1}, $\hg^*$ is
upper-semicontinuous. We need to show that the foliation $\vp(\hg^*)$ is
upper-semicontinuous.

Indeed, consider a sequence of $g$-cells $A_i\in \hg^*$ which converges to a
$g$-geodesic $\ell$ in the boundary of a $g$-cell $A\in \hg^*$. Denote the
endpoints (``end-angles'') of $\ell$ by $\ta'$ and $\ta''$.
As in the remark right after Lemma~\ref{cellcont}, the geometric considerations imply
that the $\rho$-distance between two ``clusters'' of the set $\vp(A_n)\cap X$
and the appropriate points $\vp(\ta'), \vp(\ta'')$ goes to zero.
It follows from Lemma~\ref{cellcont}
that then the $f$-cells $\vp(\ol{A_i})$ converge to the $f$-geodesic in the
boundary of $\vp(\ol{A})$ connecting the landing points of the rays $R_{\ta'}$ and
$R_{\ta''}$.

It remains to show that under $\vp$ null-sequences of cells in $\hg^*$ and in
$\vp(\hg^*)=\hx^*$ correspond to each other. One way it immediately follows: if
a sequence of $f$-cells is null but their $\vp$-preimages form a sequence which
is not null, then we can refine the latter to get a sequence converging to a
non-trivial set inside $\idisk$. Its $\vp$-image has to be contained in the
limit of the just introduced $f$-cells which can only be a point, a
contradiction. Now, suppose that $A_i\in \hg^*$ is a null sequence. We may
assume that $\ol{A_i}\cap S^1=g(\ol{D_i}\cap S^1)$ where $D_i\in \hg$. Since
$g$ is a homeomorphism, $D_i$ form a null sequence. Then by Theorem~\ref{foli},
the  $f$-cells $\vp(D_i)$ form a null-sequence too. Therefore by continuity and
Lemma~\ref{notau} $\dia_a(f(\vp(D_i)))\to 0$. By Lemma~\ref{conthyp} and by the
construction then $\dia_a(\vp(A_i))\to 0$ as desired. Hence $\hg^*$ is canonic
and Claim 1 is proven. \qed

We define $g^*$ so that it maps $A\in \hg$ onto $Z_A$ as an orientation
preserving homeomorphism and coincides with $g$ on $\ol{A}\cap S^1$. In
particular, a boundary $g$-geodesic of $A$ maps   to the corresponding boundary
$g$-geodesic of $Z_A$ so that the endpoints are mapped as the map $g$
prescribes. We can then extend this maps over all gaps by mapping the
barycenter of each gap to  the barycenter of he image gap and subsequently
coning the map on the gap.  Suppose that $A\cap r_\be\ne \0$. It follows from
Theorem~\ref{g} that then $Z_A$ intersects $r_\be$ farther away from $S^1$ than
$A$ in the sense of the order on $r_\be$. It is easy to see that then $g^*$ can
be designed so that in addition to the above we have $g^*(r_\be\cap
A)=r_\be\cap Z_A$. By Theorem~\ref{foli} the entire tail of $r_\be$ inside
$\vp^{-1}(Q)$ is covered by the sets $A\cap r_\be, A\in \hg$, hence the new map
$g^*$ maps $\vp^{-1}(Q)$ onto $\sh(K)$ so that $g^*(r_\be \cap
\vp^{-1}(Q))=r_\be\cap \sh(K)$. Since by Theorem~\ref{g} the map $g$ is
continuous, the newly constructed map $g^*$ can be constructed to be continuous
on $\ol{\vp^{-1}(Q)}$.

Now the map $g^*$ can be transported to the $f$-plane by means of the map $\vp$. To
begin with, the new map $f^*$ is defined only on $Q$ as follows: $f^*=\vp \circ
g^*\circ \vp^{-1}$. Moreover, by the construction the map $f^*$ is also defined
on \emph{entire} sets $\ol{A}, A\in \hx$. Still, there are two problems which
need to be resolved before we complete the proof of the theorem.

First, we need to extend $f^*$ from $Q$, beyond the crosscut $C_{H(B_1)}$ which
serves as the gates into the shadow $Q$, onto the strip between $C_{H(B_1)}$
and $C'_f$. To see that this is possible, notice that under $f^*$ the
$f$-geodesic crosscut $C_{H(B_1)}$ is mapped so that (a)
$f^*(C_{H(B_1)})=\vp(K)$ is an $f$-geodesic crosscut in whose shadow
$C_{H(B_1)}$ is contained, and (b) $f$ and $f^*$ on $C_{H(B_1)}$ are homotopic
outside $T(X)$. Clearly, the map $f^*$ can be extended to the region between
$C_{H(B_1)}$ and $C'_f$ as a homeomorphism so that its action coincides with
that of $f$ on $\C\sm\sh( C'_f)$ and with $f^*$ on $C_{H(B_1)}$.

Second, we define $f^*$ on the entire $\C$ as $f^*$ (already defined above on
$\sh(C'_f)$) and $f$ elsewhere. We need to show that  the map $f^*$ is
continuous. This needs to be proven only at points of $X$. Indeed, let $x_i\to
x, x\in X$ and show that then $f^*(x_i)\to f(x)$. We may assume that $x_i\nin
T(X)$. Then for each $i$ we can choose an $f$-cell $L_i\in \hx$ with $x_i\in
L_i$. To each $L_i$ we associate the corresponding $g$-cell $M_i=\vp^{-1}(L_i)$
which by the construction is the convex hull of the set $\ol{M_i}\cap S^1$ in
the hyperbolic metric on $\idisk$.

Suppose that $\dia (L_i)\to 0$. Then sets $\vp^{-1}(L_i)$ form a null-sequence
(since $\hg, \hx$ are canonic), hence their $g^*$-images form a null-sequence
of $g$-cells from $\hg^*$, hence by Claim 1 the sets
$\vp(g^*(\vp^{-1}(L_i)))=f^*(L_i)$ form a null-sequence of $f$-cells from
$\hx^*$. Since $x_i\to x$ and $L_i$ form a null-sequence, we can find points
$z_i\in \ol{L_i}\cap X$ with $z_i\to x$. Then since $f$  does not change on
$X$, $f^*(z_i)=f(z_i)\to f(x)$. On the other hand $d(f^*(x_i), f^*(z_i))\to 0$
because $\{f^*(L_i)\}$ is a null-sequence. Hence $f^*(x_i)\to f^*(x)=f(x)$ as
desired.

Suppose now that $f^*(x_i)\not \to f^*(x)=f(x)$. We may assume
(by the previous paragraph), that $\dia L_i\ge \e''$ for some $\e''>0$.
Then since $\hx$ and the corresponding foliation $\hg$ are canonic, we may
assume that $L_i\to L\in \hx$ and $M_i\to M\in \hg$ with both $L, M$ being
non-degenerate. Clearly, points $\vp^{-1}(x_i)$ converge to a point
$\vp^{-1}(x)\in \ol{M}\cap S^1$ which implies that there are points $z_i\in
\ol{L_i}\cap X$ such that $z_i\to x$ and $\vp^{-1}(z_i)\to \vp^{-1}(x)$.
Therefore, by the construction, $d(g^*(x_i), g^*(z_i))\to 0$. By
Lemma~\ref{continu} this implies that $d(\vp(g^*(x_i)),
\vp(g^*(z_i)))=d(f^*(x_i), f^*(z_i))\to 0$. Since $f^*(z_i)=f(z_i)\to
f^*(x)=f(x)$, we finally conclude that $f^*(x_i)\to f^*(x)$ as desired. Thus,
the map $f^*$ has all the required properties and the theorem is proven.
\end{proof}

\section{Converging arcs and fixed points}\label{main-section}

By (A1) - (A7), $X$ is an indecomposable continuum containing no subcontinua
$Y$ with $f(Y)\subset T(Y)$. In particular, $X$ contains no invariant
subcontinua not equal to $X$. By the construction, if we prove our Main Theorem
for $f^*$, it will hold for $f$ too. Thus, in what follows we denote the map
$f^*$ constructed in Section~\ref{advaprel} by simply $f$. Also, set $R_\be=R$.
We deal a lot with subsegments of $R$ and from now on skip the subscript $\be$
in denoting them (so that $[a, b]$ means in fact $[a, b]_\be$ etc).
Similarly we denote the order $<_\be$ in $R$ simply by $<$
 (the situation considered in this section allows
us to do so without causing any ambiguity). Sometimes however we need to deal
with subarcs of other arcs/rays/lines, not contained in $R$. In that case we
indicate this with a subscript; thus, if $T$ is an arc/ray/line and $u, v\in T$
then by $[u, v]_T$ we mean the closed subarc of $T$ with endpoints $u, v$ (for
rays $R_\al$ we use the usual notation $[a, b]_\al$). Denoting subsets of
$R$ we use $\iy$ in the obvious sense (thus, $(x, \iy)$ is the subray of $R$
consisting of all points $y\in R$ with $y> x$).

By Section~\ref{advaprel}, we may assume that for some $z\in R$ the tail $(0,
z]$ of $R$ is invariant in the sense that $f|_{(0, z]}:(0,z]\to (0,f(z)]$ is an
embedding so that for all $x\in(0,z]$, $f(z)>z$. The ray $R$ is ordered from
infinity towards $X$; if $u, v\in R$ and $u<v$, say that $v$ is
\emph{$R$-closer to $X$ ($u$ is closer to $\iy$) than $u$}. We also say
\emph{$R$-closer} speaking of points on $R$ and meaning the order on $R$.
%We
%make a few assumptions concerning the location of the continuum $X$ and the
%external ray $R$ on the plane. They are non-restrictive and aim at simplifying
%the explanations.

%We put $\iy$ on $R$ and use standard notation for
%half-rays $(\iy, x)$ as well as for intervals $(u, v), [u, v]$ etc. For
%half-rays starting at a point $x\in R$ and extending towards $X$ we use the
%notation $(x, X), [x, X)$ etc which are always understood as subarcs of $R$.
%Also, we can talk of a sequence of points in $R$ tending to $X$ in the obvious
%sense.

We need the following lemma.

\begin{lemma}\label{nd}
If $Z\subset X$ is nowhere dense in $X$ then for any $n$ the set $Z\cup
f(Z)\cup \dots f^n(Z)$ is nowhere dense in $X$ too. Hence, since $X\cap
\tau(X)$ is a closed and nowhere dense subset of $X$, for any
$n\in\mathbb{Z}^+$ there exists an open set $U\subset X$ such that for each
$i=0,1,\dots,n$, $f^i(U)\cap \tau(X)=\0$.
\end{lemma}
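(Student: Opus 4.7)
The plan is to handle the two halves of the lemma in turn. For the first half I argue by induction on $n$, the substantive step being: \emph{if $Z\subset X$ is nowhere dense in $X$, then $f(Z)$ is nowhere dense in $X$.} Suppose otherwise; since $\overline{Z}$ is compact, $\overline{f(Z)}=f(\overline{Z})$, and by assumption this closed set contains a non-empty open subset $V$ of $X$. Then $W:=f^{-1}(V)\cap X$ is a non-empty open subset of $X$, because $f|_X$ is continuous and surjective. Using that $f^{-1}(f(\overline{Z}))=\overline{Z}\cup\tau(\overline{Z})$, I get
$$W\subset(\overline{Z}\cup\tau(\overline{Z}))\cap X\subset \overline{Z}\cup(\tau(X)\cap X).$$
By Lemma~\ref{emptyint}, $S:=\tau(X)\cap X$ is closed and nowhere dense in $X$; together with $\overline{Z}$, this exhibits $\overline{Z}\cup S$ as a finite union of closed nowhere dense subsets of $X$, hence as a closed nowhere dense set, so it cannot contain the non-empty open set $W$. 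This contradiction proves the claim. By iteration $f^i(Z)$ is nowhere dense for every $i$, and the finite union $Z\cup f(Z)\cup\dots\cup f^n(Z)$ is therefore nowhere dense as well.

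For the second assertion I would not use forward orbits of $S$ but the dual preimage statement: \emph{if $Z\subset X$ is closed and nowhere dense in $X$, then $f^{-1}(Z)\cap X$ is closed and nowhere dense in $X$.} Suppose some non-empty open set $W\subset X$ satisfied $W\subset f^{-1}(Z)\cap X$, so that $f(W)\subset Z$. Since $S$ is nowhere dense, I can pick $x\in W\setminus S$; then $\tau(x)\notin X$, so by shrinking a planar neighborhood $U_x$ of $x$ I may ensure $\overline{U_x}\cap\tau(X)=\emptyset$ and $X\cap U_x\subset W$. The identity $f(X\cap U_x)=f(U_x)\cap X$ from the proof of Lemma~\ref{fsigde} then exhibits an open subset of $X$ contained in $f(W)\subset Z$, contradicting that $Z$ is nowhere dense. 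Iterating, each $(f|_X)^{-i}(S)$ is closed and nowhere dense, so $D_n:=\bigcup_{i=0}^n (f|_X)^{-i}(S)$ is closed and nowhere dense in $X$. Any non-empty open subset $U\subset X\setminus D_n$ then satisfies $f^i(U)\cap\tau(X)=f^i(U)\cap S=\emptyset$ for $i=0,\dots,n$, as required.

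The main obstacle in both halves is the same: a two-to-one branched covering could \emph{a priori} collapse a set with non-empty interior in $X$ onto a nowhere dense set, or pull such a set off of one. Lemma~\ref{emptyint} is exactly what rules this out, since the fact that the "doubled" set $S=X\cap\tau(X)$ is nowhere dense means that on the dense open complement $X\setminus S$ the map $f|_X$ is locally a homeomorphism. Once that is exploited, both halves reduce to routine finite-union bookkeeping.
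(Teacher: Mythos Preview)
Your argument is correct, but it is organized differently from the paper's.

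For the first half, the paper does \emph{not} invoke Lemma~\ref{emptyint}. Instead it works locally: for any closed ball $B$ in the plane avoiding the critical value, $f^{-1}(B)=B'\cup B''$ with $f|_{B'},f|_{B''}$ homeomorphisms onto $B$; since $Z\cap B'$ is nowhere dense in $B'\cap X$ and $f|_{B'}$ is a homeomorphism, $f(Z\cap B')$ is nowhere dense in $B\cap X$, and similarly for $B''$, so $f(Z)\cap B$ is nowhere dense in $B\cap X$. This is a purely local branched-covering argument and applies to any nowhere dense $Z\subset X$ without knowing anything about $\tau(X)\cap X$. Your version argues globally by contradiction and buys simplicity at the cost of an extra hypothesis: you need $S=\tau(X)\cap X$ to be nowhere dense (Lemma~\ref{emptyint}) to dispose of the piece $\tau(\overline Z)\cap X$. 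That is fine here, since Lemma~\ref{emptyint} is already available, but it does make the first sentence of the lemma depend on (A4)--(A7) rather than just on $f$ being a degree-$2$ branched covering.

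For the second half, the paper derives it from the first: the complement $W$ of $Z\cup f(Z)\cup\dots\cup f^n(Z)$ in $X$ (with $Z=S$) is open and dense, a point $x\in W$ has $f^{-i}(x)\cap Z=\emptyset$ for $i\le n$, and any $y\in f^{-n}(x)\cap X$ then has a neighbourhood $U$ with $f^i(U)\cap\tau(X)=\emptyset$. You instead prove the dual statement that $(f|_X)^{-1}$ preserves ``closed nowhere dense'' (again using Lemma~\ref{emptyint} via the local-homeomorphism trick from Lemma~\ref{fsigde}) and take $U$ in the complement of $\bigcup_{i\le n}(f|_X)^{-i}(S)$. The two routes are equivalent in strength; yours is perhaps more transparent about which set $U$ must avoid, while the paper's keeps the lemma's two sentences logically linked (the second is a direct corollary of the first).
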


\begin{proof} Given a closed ball $B$ not containing the critical image,
we have $f^{-1}(B)=B'\cup B''$ with both $B', B''$ homeomorphic to $B$. Since
$Z\cap B'$ is nowhere dense in $B'\cap X$, then $f(Z\cap B')$ is nowhere dense
in $f(B'\cap X)\subset B\cap X$. Similarly, $f(Z\cap B'')$ is nowhere dense in
$B\cap X$ too. Hence $f(Z)\cap B$ which is the union of two nowhere dense in
$B\cap X$ sets is nowhere dense in $B\cap X$. This implies that $f(Z)$ is
nowhere dense in $X$ and proves, inductively, the first claim of the lemma.

Set $Z=X\cap \tau(X)$. Then the complement to $Z\cup f(Z)\cup \dots f^n(Z)$ is
a dense open subset $W$ of $X$ (by the first paragraph). On the other hand, $W$
consists of the points $x$ such that sets $x, f^{-1}(x), \dots, f^{-n}(x)$ are
disjoint from $Z$. Hence any point $y\in f^{-n}(x)$ is such that $y,$ $f(y),$
$\dots,$ $f^n(y)=x$ do not belong to $Z$. Hence if we take a small neighborhood $U$
of $y$ it will satisfy the requirements of the lemma.
\end{proof}

Now we are ready to prove our main theorem.

%\begin{theorem}
%Suppose that $f:\C\to\C$ is a branched covering map of degree $-2$ and let $Y$
%be a non-separating,  invariant subcontinuum. Then either:

%\begin{enumerate}

%\item $f$ has a fixed point in $Y$,

%\item
%$Y$ contains an invariant plane continuum $X$ such that $X$ contains no
%subcontinuum $Z$ with $f(Z)\subset Z$ and $f^{-1}(X)=X=f(X)$.

%\end{enumerate}

%In the latter case $T(X)$ has exactly three fixed prime ends one of which
%corresponds to an outchannel and the other two to inchannels.
%\end{theorem}

\begin{theorem}\label{main}
Suppose that $f:\C\to\C$ is a branched covering map such that the  absolute
value of the degree is at most $2$ and let $Y$ be a continuum such that $f(Y)\subset T(Y)$.
Then one of the following  holds.

\begin{enumerate}

\item The map $f$ has a fixed point in $T(Y)$.

\item The continuum $Y$ contains a \emph{fully invariant indecomposable}
continuum $X$ such that $X$ contains no subcontinuum $Z$ with $f(Z)\subset Z$;
moreover, in this case $\degree(f)=-2$.

\end{enumerate}

\end{theorem}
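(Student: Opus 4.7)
The plan is to derive a contradiction from the standing assumptions (A0)--(A7), in particular (A4), forcing $X$ to be fully invariant and so establishing the dichotomy. After replacing $f$ by the modified branched covering $f^*$ from Theorem~\ref{hom}, we may assume that a tail $(0,z_0]$ of $R=R_\be$ is invariant: $f$ embeds $(0,z_0]$ into $R$ with $f(y)>y$ for every $y$ in the tail, so $f$ pushes points of the tail strictly away from $X$, and admits on $(0,f(z_0)]$ a well-defined inverse branch $f_R^{-1}$ pushing points back toward $X$. The goal is to exhibit a proper subcontinuum $K\subsetneq X$ with $f(K)\subset T(K)$, contradicting the minimality property (A0).

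The first ingredient comes from Lemma~\ref{emptyint} together with Lemma~\ref{nd}: $X\cap\tau(X)$ is closed and nowhere dense in $X$, and for every $n$ the union $\bigcup_{i=0}^n f^i(X\cap\tau(X))$ remains nowhere dense. Intersecting the corresponding dense open complements produces a dense $G_\delta$ set $U^*\subset X$ of points whose entire forward orbit avoids $\tau(X)$; near each such point, $f$ is locally one-to-one from $X$ into $X$. Since $X$ is the principal set of $R$, the ray re-enters any prescribed neighborhood of any $x^*\in U^*$ infinitely often along ``returns''. Fixing such an $x^*$ and a small Jordan disk $W$ around $x^*$ with $\overline{W}\cap\tau(X)=\emptyset$, pick a short ``return'' segment $I_0\subset R\cap W$ on the invariant tail; using $f_R^{-1}$ together with the accumulation structure of $R$ on $U^*$, lift $I_0$ to segments $I_k\subset R$ on successively earlier returns satisfying $f(I_k)=I_{k-1}$, each $I_k$ sitting strictly closer to $X$. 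By compactness a subsequence converges in the Hausdorff metric to a continuum $K\subset X$, and the continuity of $f$ with $f(I_k)=I_{k-1}$ yields $f(K)=K$.

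The main obstacle is verifying that $K$ is a \emph{proper} subcontinuum of $X$: a priori the lifted segments could wind around $X$ and accumulate on all of it. This is exactly where the first-category property of Lemma~\ref{emptyint} becomes decisive. Because every $I_k$ lies in a region avoiding $\tau(X)$, for each $k$ the iterate $f^k$ possesses an injective local $X$-branch along $I_k$, so the pullback structure on $X$ is well-defined along the construction. Applying Krasinkiewicz's separation statement, Lemma~\ref{kras}(\ref{cut}), to the internal composant of $X$ containing $x^*$, together with Lemma~\ref{intint} (which says $f$ sends internal composants to internal composants, ruling out composant-mixing in the limit), confines $K$ to the closure of a single composant of $X$ and hence forces $K\subsetneq X$. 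Once this is established, $f(K)=K\subset T(K)$ contradicts (A0), completing the proof. The hard part of the plan, to be addressed in detail, is the quantitative matching between the inverse-branch lifts on $R$ and the local $f$-branches on $X$ near $x^*$ that is needed to pass from a Hausdorff-limit statement to the Krasinkiewicz-type separation.
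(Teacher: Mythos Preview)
Your plan shares the paper's overall strategy---pull back subarcs of the invariant tail of $R$ and pass to a Hausdorff limit $K\subset X$ with $f(K)\subset K$, contradicting (A0)---but the crucial step, showing $K\subsetneq X$, does not go through as you sketch it. The argument you offer is that Lemma~\ref{kras}(\ref{cut}) and Lemma~\ref{intint} ``confine $K$ to the closure of a single composant of $X$ and hence force $K\subsetneq X$.'' This fails on its face: every composant of an indecomposable continuum is dense, so its closure is all of $X$; being contained in the closure of one composant gives nothing. Even if you meant containment in a single composant (not its closure), you have provided no mechanism for it. The segments $I_k=f_R^{-k}(I_0)$ are arbitrary subarcs of $R$ marching toward $X$; nothing prevents them from winding around and accumulating on all of $X$, and neither Krasinkiewicz's separation lemma nor the fact that internal composants map to internal composants controls a Hausdorff limit of arcs in $U_\iy(X)$. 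Your own closing sentence concedes that the ``quantitative matching'' needed here is not yet in hand; in fact there is no route to it along the lines you describe.

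The paper closes this gap by a concrete geometric construction rather than a composant argument. It chooses a point $p$ with $p,f(p),f^2(p),f^3(p)\in X\sm\tau(X)$ (using Lemma~\ref{nd} only to find such a $p$, not to control all pullbacks), and builds short transversal arcs $G,H,H',H''$ to $R$ in disjoint neighborhoods of these four points. ``Primes'' are then irreducible subarcs of $R$ running from $H$ to $H'$. A delicate but elementary planar analysis---using that $f$ flips the side of $R$ on which these arcs sit, that $\tau(R)$ stays away from the chosen neighborhoods, and that $f$ pushes points of $R$ toward $\iy$---produces a sequence of primes $P_0,P_{-1},P_{-2},\dots$ with $f(P_{-n-1})\subset P_{-n}$ whose endpoints on $H$ and on $H'$ are \emph{monotone}. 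This monotonicity is what guarantees properness: the first prime $P_0$, together with pieces of $H,H'$ and $R$, bounds a Jordan disk $L_0$ meeting $X$, while all later primes and hence the limit $Z$ lie outside $L_0$, so $Z\ne X$. Your proposal is missing an analogue of this monotone trapping device; without it the properness claim is unsupported.
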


\begin{proof}
As before, we assume that (1) does not hold while the standard assumptions
(A1)-(A7) apply to an indecomposable continuum $X\subset Y$. We also
assume that the map has already been modified according to Theorem~\ref{hom}
and that therefore there exists a ray $R=R_\be$ with all properties of $R_\be$
as well as properties listed in Theorem~\ref{hom}. We may assume that $X$ is a
non-degenerate continuum containing no invariant subcontinua and such that
$f|_{T(X)}$ is fixed point free. Note that by (A7) $f^{-1}(X)=X\cup
\tau(X)\supsetneqq X$ is a continuum.

By Lemma~\ref{emptyint}, the set $X\cap \tau(X)$ is nowhere dense in $X$ while
$X\setminus \tau(X)=Q$ is a dense open subset of $X$. By Lemma~\ref{nd} we can
choose a point $p\in Q$ so that $f(p)=q\in Q, f(q)\in Q, f^2(q)\in Q$. We may
assume that $p$ is not equal to $c$ and its first preimages. Thus we can choose
a small neighborhood $V$ of $p$ such that $f^3|_V$ is a homeomorphism. Set
$U=f(V), U'=f(U), U''=f^2(U)$; we may assume that $X\cap(V\cup U\cup U'\cup
U'')\subset X\setminus \tau(X)$. Since the principal set of $R$ is $X$, the
sibling ray $\tau(R)$ is dense in $\tau(X)$ in the sense that $\ol{\tau(R)}\sm
\tau(R)=\tau(X)$. This implies that for some $\da>0$ the sibling ray $\tau(R)$ does
not come closer than $\da$ to $p, q, f(q)$ or $f^2(q)$. Hence we may assume
that $V$, $U$, $U'$ and $U''$ are all disjoint from $\tau(R)$.

Choose an $R$-defining family of crosscuts $C_t$ (see Section~\ref{baseprel}).
Since $R$ converges to $X$, there is a point $r\in V\cap R$ with $\ol{C_r}\subset
U$. Choose $r\in R$ to satisfy a few conditions. First, we may assume that $(r,
\iy)$ is a vertical line and $C_r$ is a horizontal segment. By
Theorem~\ref{hom}, we may assume that $f^4(r)\in R$ and
$f|_{\sh(C_{f^3(r)})}:\sh(C_{f^3(r)})\to \sh(f(\sh(C_{f^3(r)})$ is a
homeomorphism so that $f$ maps points on $R\cap \sh(C_{f^{3}(r)})$ to points on
$R$ closer to $\iy$. Let $W=\sh(C_{f^3(r)})$. We may also assume that $W$
contains no fixed points of $f$.

In the forthcoming arguments we move along the ray $R$ \emph{towards} $X$ and
use the terms like ``after'', ``before'' etc in the appropriate sense.
Figure~\ref{fig1} may help the reader to visualize the following construction.
Extend
$C_r$ a bit to the left while removing the part located to the right of $r$ to
create an arc $G\subset V$ disjoint from $(r, \iy)$. We may assume that $(0,
r)$ intersects $G$ infinitely often. To see this, take a sequence of points
on $R$ converging to the endpoint of $G$ distinct from $r$, draw an arc
through them all which ends at the left endpoint of $G$ and is disjoint from
$C_r$, and then add this arc to $G$. The added arc can be chosen arbitrarily
small. Then shorten $G$ a little by choosing its endpoint $t$ distinct from $r$
as the \emph{first point after $r$} (closest to $r$ in the sense of the order on $R$)
of $(0, r)$ intersecting the just extended $G$. Then $[r,t]\cup G$ is a
Jordan curve, and we may assume that $X$ meets both the unbounded component and
the bounded component of $\Complex\setminus\{[r,t]\cup G\}$. Indeed, we can
choose a point $y\in R$ and an essential crosscut $C_y$ such that $\ol{C_y}\cap
\ol{C_r}=\0$. Then we can construct $G$ so that $G\sm C_r$ is very small and
hence is disjoint from $C_y$. This implies that one of the two endpoints of
$C_y$ is inside the unbounded component and the other is in the bounded component of
$\Complex\setminus\{[r,t]\cup G\}$.

By Section~\ref{advaprel} the image of $G$ is the arc $f(G)=H\subset U$ which
grows out of $R$ at $f(r)=a$ and sticks out of $R$ to the right (the other
endpoint of $H$ is $f(t)=x$). By Section~\ref{advaprel} the image of $H$ is the
arc $f(H)=H'\subset U'$ which grows out of $R$ at $f(a)=a'$ and sticks out of
$R$ to the left (the other endpoint of $H'$ is $f(x)=x'$). Finally, we consider
the arc $f^2(H)=H''\subset U''$ which grows out of $R$ at $f^2(a)=a''$, sticks
out of $R$ to the right and has the other endpoint $f^2(x)=x''$. By the choice
of $U$ the segments $H, H'$ and $H''$ are pairwise disjoint. Moreover, by the
choice of $q$ the points of $X\cap (G\cup H\cup H'\cup H'')$ have no siblings
in $X$. To simplify the language we may assume that $G, H, H', H''$ are all
horizontal segments.

By the choice of $G$ the set $G\cap R$ is infinite. Hence the sets $H\cap R,
H'\cap R$ and $H''\cap R$ are infinite.
%We may assume that there exists a sequence of points in
%$R\cap H$ which tends to $X$ (since $R$ accumulates on the entire $X$ we can
%always extend $C_r$ to $G$ so as to achieve that). Then $H'\cap R$ is infinite
%and there exists a sequence of points in $R\cap H'$ which tends to $X$.
Also, recall that by the above made choices $\tau(R)$ is disjoint from $V, U,
U', U''$ and hence from $H$ and $H'$. Clearly, there are irreducible
sub-segments of $R$ connecting points of $H$ and $H'$. We call such segments of
$R$ \emph{prime segments}, or simply \emph{primes}. Two distinct primes
intersect at most at one of their endpoints (in this case one can call them
\emph{concatenated}).

The idea of what follows is to consider primes and their pullbacks. We show
that there is a ``monotone'' sequence of primes $P_0, P_{-1}, P_{-2}, \dots$ in
the sense that their endpoints on $H$ and $H'$ are ordered monotonically.
Moreover, the primes have the property that $f(P_{-n-1})\subset P_{-n}$. Then
these primes converge to a limit continuum $K$ with $f(K)\subset K$.
However, the monotonicity implies that $P_0$ cuts the limit continuum $K$ off
some points of $X$ and hence $K\ne X$, a contradiction with the assumption that
$X$ contains no proper invariant subcontinuum. Defining the desired sequence of
primes requires some purely geometric considerations in the plane.

Our arguments are based upon the observation that if moving along $R$ from
infinity towards $X$ we meet a point then before that we must have met the
image of this point because the points in $R$ map towards infinity (i.e.,
$f(z)>z$ for $z\in R\cap W$). By the construction $R$ passes through $x$. Let
us show that this is the first time $R$ intersects $H$ after $r$. Indeed,
otherwise there is a point $z>x, z\in H\cap R$. The point $z$ has the preimage
$z'\in G$ which cannot belong to $\tau(R)$ because $\tau(R)$ is disjoint from
$U$. Hence $z'\in R$. Moreover, $z'>t$ because $z>x$. This contradicts the
choice of $t$. Similarly, $x'$ and $x''$ are the first times after $r$ when the
ray $R$ intersects arcs $H'$ and $H''$ respectively. This in turn implies, by
the same argument, that in fact $x'$ is the first point at which $R$ hits
$f^{-1}(H'')$.

\begin{figure}%\label{fig1}
\scalebox{.5}{
\includegraphics{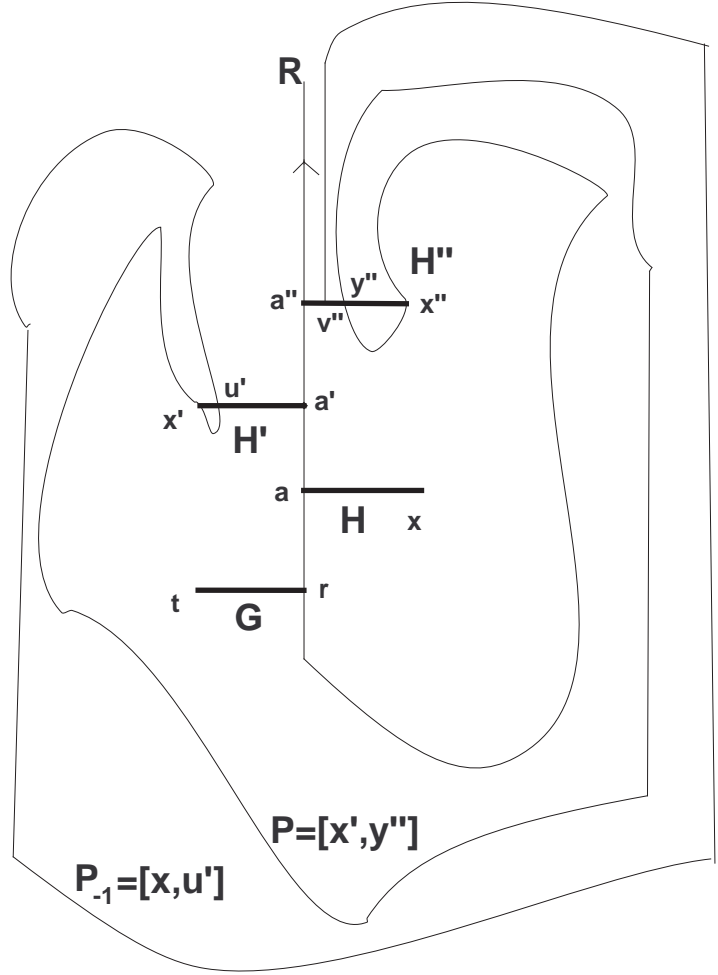}}
\caption{The ray $R=R_\beta$} \label{fig1}
\end{figure}

Clearly, the arc $[a'', x'']\subset R$ together with $H''$ forms a Jordan curve
$S$ which encloses an open Jordan disk $D$. Moreover, since both $H$ and $H''$
are located to the right of $R$ and since $H$ is disjoint from $S$ except for
$a$ we see that $(H\setminus \{a\})\subset D$. Now, the ray $R$ may have other
points of intersection with $H''$ after $x''$ and before it hits $H'$ for the
first time at $x'$. Denote by $y''$ the last point of $H''$ before $x'$.
Clearly, the ray points up as it finally exits $D$ at $y''$ for otherwise the
point $y''$ would not be the last point on $R$ before it hits $H'$ (we use the
fact that $R$ can only exit $D$ through $H''$ and that $H'$ is outside $D$).

%Let us show that $R$ hits $H'$ for the first time at the point $x'$. Indeed,
%otherwise denote by $s$ the point at which $R$ hits $H'$ for the first time.
%Then $s>x'$ and hence, $f(s)>f(x')=x''$,  a contradiction with the choice of
%$x''$. Thus, $s=x'$. Moreover, $R$ does not hit the entire set $f^{-1}(H'')$
%before the point $x'$ because otherwise, analogously to the explanation in the
%previous paragraph, we will have points of $H''$ on $R$ before $x''$, a
%contradiction.

Let $P$ be the subarc of $R$ given by $[y'', x']$. It follows that $[a'',
y'']_{H''}\cup P\cup H' \cup [a', a'']$ is a simple closed curve $T$ which
encloses a disk $\hD$ (recall that intervals without subscripts are subarcs of $R$).
 Observe that the
simple closed curves $S=[a'', x'']\cup H''$ and $T$ have an arc
%$[a', x'']\cup [x'', y'']_{H''}$
$[a', a'']\cup [a'', y'']_{H''}$ in common and therefore form a
\emph{$\ta$-curve}. Observe also, that by the construction $D\subset \hD$ (see
Figure~\ref{fig1}).

Repeating the above arguments with obvious changes in notation we see that
after $x'$ the ray $R$ may have intersections with $H'$, then it finally goes
off $H'$ at a point $u'\in R\cap H'$ and then it hits $H$ for the first time at
the point $x$. We claim, that as $R$ goes off $H'$ at $u'$, it points up and moves
outside $\hD$. Indeed, suppose otherwise. Then the ray $R$ after $u'$ goes
inside $\hD$ and crosses $H$ at $x$ for the first time while not crossing $H'$
before that anymore. Consider the Jordan disk $D'$ whose boundary is formed by
$H'$ and the subsegment $[a', x']$ of $R$. Since $R$ cannot intersect itself
and, by the assumption, it does not intersect $H'$ after $u'$ anymore before it
intersects $H$, we see that $[u', x]\subset D'$. However by the construction
the point $x$ lies outside $D'$, a contradiction.

Thus, \emph{after $u'$ the ray $R$ goes up}. By the choice of $u'$ as the last
point on $H'$ on $R$ before $R$ hits $H$, it follows that $R$ has to penetrate
$D\ni x$ through $H''$ in order to reach out to $x$. Since $P=[y'', x']$
shields the subarc $[y'', x'']_{H''}$ of $H''$ from $R$ then the first point of
intersection between $R$ and $H''$ after $u'$ has to be a point $v''\in [a'',
y'']_{H''}$. Clearly, $R$ approaches $H''$ from above before it hits $H''$ at
$v''$. As we continue towards $H$, the ray $R$ after $v''$ may have more
intersections with $H''$, but then it finally hits $H$ at $x$. This creates our
first prime $P_0=[u', x]$ on which we have a subarc $[u', v'']$ with $v''\in
[a'', y'']_{H''}$. The prime $P_0$, together with the arc $[u',
a']_{H'}\cup [a', a]\cup H$, forms a simple closed curve $Y$ which encloses a
disk $L_0$.

Let us now define the first pullback of $P_0$. It follows that the ``zigzag''
arc $I=H'\cup [a', a]\cup H$ can be pulled back to the arc $J=G\cup [a, r]\cup
H$ (this pullback is simply a restriction of the corresponding branch of the
inverse function which is a homeomorphism). Observe that all points of $R\cap
I$ then pullback to points of $R\cap J$ (otherwise there will have to be points
of $\tau(R)$ in $J$ which is impossible). In particular, there exists a point
$u\in H\cap R$ with $f(u)=u'$. This pullback can be then extended onto $P_0$,
say, starting at $x$ and then by continuity. Let us show that this results into
a subarc of $R$ which connects $t\in G$ to $u\in H$. Indeed, under this
pullback the point $x$ pulls back to $t$. Recall, that by the construction
$R$ hits $G$ at $t$ for the first time.
%since $V\supset G$ is
%disjoint from $\tau(R)$ then $t$ cannot belong to $\tau(R)$, and since
%$f(t)=x\in R$ then in fact $t\in R$.
Hence $P_0$ pulls back to an arc $Q$ which
at least around $t$ is a subarc of $R$, and hence overall (as a set) is a
subarc of $R$ too. The other endpoint of this subarc of $R$ should be the
unique preimage of $u'$ belonging to $R$, and by the shown above this can only
be the point $u$. Moreover, since at $u'$ the prime $P_0$ points up, so does the
arc $Q$ at $u$.

Observe also that $Q$ cannot intersect $H$ at more than one point since
otherwise its image $P_0$ will intersect $H'$ at more than one point. Therefore
the arc $Q$ exits $L_0$ at $u$ only to penetrate back into $L_0$ later through
$H'$ in order to reach out to $t\in G$. Denote by $s$ the closest to $u$ on $Q$
point of $H'$ and show that $s=v'$ is the unique preimage of $v''$ on $H'$.
Indeed, since $\tau(R)$ is ``far away'' from $V, U, U'$ and $U''$, then
$\tau(G\cup H\cup H'\cup H'')$ is disjoint from $R$. In particular, there are
no points of $\tau(H')$ in $[u, s)\subset Q$. On the other hand, there are no
points of $H'$ in $[u, s)$ by the choice of $s$. Therefore there are no points
of $f^{-1}(H''$ in $[u, s)$ which implies that there are no points of $H''$ in
$[u', f(s))$ while $f(s)\in H''$. By the definition of $v''$ this implies that
$f(s)=v''$ and hence $s=v'$ as desired. Moreover, $u\in H$ is closer to $a$
\emph{on $H$} than $x$ and $v'$ is closer to $a'$ on $H'$ than $u'$. Indeed,
the former is obvious. Also, as we pointed out before, $Q$ exits $L_0$ at $u$
and then it can only come back into $L_0$ through $[u', a']_{H'}$ so that
indeed $v'$ is closer to $a'$ on $H'$ than $u'$.

The arc $[u, v']\subset Q$ is then declared to be the next prime $P_{-1}$. By
the construction, its \emph{image} is a subarc of $P_0$. Moreover, $P_{-1}$ connects
$H$ and $H'$ in a specific way, namely so that the initial small segments at
the endpoints of $P_{-1}$ point up compared to the horizontal arcs $H$ and $H'$
respectively. To make the notation consistent let us from now on denote the
endpoints of $P_0$ by $\al_0=x, \be_0=u'$ and the endpoints of $P_{-1}$ by
$\al_1=u$ and $\be_1=v'$.
%We also denote the point $v''$ by $\ga_0$; this is
%the closest to $\be_0$ \emph{on $P_0$} point of $P_0\cap H''$.
Observe that
$G\subset L_0$. The endpoints of $P_{-1}$ are located so that
$\al_1$ is closer to $a$ on $H$ than $\al_0$ and $\be_1$ is closer to $a'$ on
$H'$ than $\be_0$.
%Above we showed that $f(\be_1)=\ga_0$. On $P_{-1}$ we also
%define a point $\ga_1$ as follows. As we move along $P_{-1}$ from $\al_1$ we
%see that $P_{-1}$ exits $L_0$ at $\al_1$ moving up (because  the same takes
%place at the image of $\al_1=u$ point $\be_1=u'$). Later $R$ will have to exit
%$D$ through $H''$. We define $\ga_1$ as the last point of $H''$ on $P_{-1}$
%before it starts moving to $H'$  (to hit $H'$ at $\be_1$). It follows that
%$\ga_1$ must be closer to $a''$ on $H''$ than $\ga_0=v''$ because otherwise
%$P_{-1}$ will be trapped and will not be able to reach out to $H'$ \emph{from
%above}.

The above established  properties of primes can be used in the inductive process
showing that we can construct a sequence of primes with  similar
properties. Namely, suppose that we already have a finite sequence of pairwise
disjoint primes $P_0, P_1, \dots, P_{-n}$ such that the following holds.

\begin{enumerate}

\item $P_{-i}=[\be_i, \al_i]$ with $\be_i\in H', \al_i\in H$ and
$P_{-i}\cap (H'\cup H)=\{\al_i, \be_i\}$;

\item for each $i, 0\le i\le n-1$ the point $\be_{i+1}$ is closer to $a'$ than the point
$\be_i$ on the arc $H'$;

\item for each $i, 0\le i\le n-1$ the point $\al_{i+1}$ is closer to $a$ than the point
$\al_i$ on the arc $H$;

%\item on each $P_{-i}$ the point $\ga_i\in P_{-i}\cap H''$ is chosen so that the arc
%$(\ga_i, \be_i)$ is disjoint from $H\cup H'\cup H''$;

\item for each $i, 0\le i\le n-1$ we have that
%$f(\be_{i+1})=\ga_i$ and
$f(\al_{i+1})=\be_i$;

\item the initial segments of $P_{-i}$ at the endpoints of $P_{-i}$ point up;

\item for each $i, 0\le i\le n-1$ we have $f(P_{-(i+1)})\subset P_{-i}$.

\end{enumerate}

Let us show that then we can construct the next prime $P_{-n-1}$ so that all
these properties are satisfied. First though we locate a few points using the
fact that $f^3|_V$ is a homeomorphism. Since $f(\al_n)=\be_{n-1}$, we see that
$f([a, \al_n]_{H})=[a', \be_{n-1}]_{H'}$. Hence there is a preimage of
$\be_n\in H'$ in $H$, between $a$ and $\al_n$. Denote this preimage
$\al_{n+1}$. Also, choose $\zeta_{n+1}$ on $G$ so that $f(\zeta_{n+1})=\al_n$.
Finally, set $[\al_n, a]_H\cup [a, a']\cup [a', \be_n]_{H'}=Q_n$. Then it follows
from the location of the primes that $P_{-n}\cup Q_n=E_n$ is a Jordan curve
which encloses a Jordan disk $L_n$, and $L_0\subset L_1\dots \subset L_n$.
Moreover, $G\subset L_0$.

The point $\be_n$ has two preimages, $\al_{n+1}$ and $\tau(\al_{n+1})$. One of
them belongs to $R$, the other one belongs to $\tau(R)$. Since $\tau(R)$ is
disjoint from $V, U, U', U''$ then $\al_{n+1}\in R$. Similarly we see that
$\zeta_{n+1}\in R$. Hence the pullback $S_n$ of $P_{-n}$ within $R$ (we can talk
about it because by Theorem~\ref{hom} we assume that a tail of $R$ is
invariant) connects $\zeta_{n+1}$ and $\al_{n+1}$. Moreover, $S_n$ points up at the
points $\zeta_{n+1}$ and $\al_{n+1}$ because so does $P_n$ at their images, i.e. at
the points $\al_n$ and $\be_n$.

It follows that at $\al_{n+1}$ the arc $S_n$ exits $L_n$ and that $S_n$
intersects $H$ only at $\al_{n+1}$ (otherwise $P_n$ would intersect $H'$ at
more than one point $\be_n$). Since the other endpoint of $S_n$ is
$\zeta_{n+1}\in L_n$, it must enter back into $L_n$, and by the above it can
only do so through $H'$ closer to $a'$ than $\be_n$ (the rest of $H'$ is
shielded from $S_n$ by $P_{-n}$). Follow $S_n$ from $\al_{n+1}$ on towards
$\zeta_{n+1}$ until it meets $H'$ for the first time. Denote the closest to
$\al_{n+1}$ \emph{on $R$} point of $S_n$ which belongs to $H'$ by $\be_{n+1}$.
Then the arc $[\al_{n+1}, \be_{n+1}]=P_{-n-1}$ satisfies all the conditions on
primes listed above. Thus, we were able to make the step of induction which
proves the existence of an \emph{infinite} sequence of primes
$\{P_{-i}\}_{i=0}^\iy$ with the above listed properties.

By the construction the sequence of primes $\{P_{-i}\}$ converges to a
continuum which we denote $Z$. Indeed, the endpoints of primes $\al_n, \be_n$
converge to points $\al\in H\cap X, \be\in H'\cap X$ respectively. Choose $Z$
as the limit of a subsequence of primes, then choose a small neighborhood $M$
of $Z$, and then choose $P_{-N}$ so that the arc $[\al, \al_N]_H\cup P_{-N}\cup
[\be_n, \be]_{H'}\subset M$. It follows that then the Hausdorff distance
between $P_{-k}$ and $Z$ for any $k>N$ must be small and implies that $Z$ is
the limit (in the Hausdorff metric) of the sequence of primes $P_{-n}$.

Obviously, $Z\subset X$. Moreover, by the construction there are points of $X$
inside $L_0$ while $Z$ is disjoint from $L_0$. Therefore $Z\ne X$. However, by
continuity the fact that $f(P_{-(i+1)})\subset P_{-i}$ for every $i$ implies
that $f(Z)\subset Z$ which contradicts the minimality of $X$. Hence we may
finally conclude that the assumption of $X$ not being fully invariant fails. In
other words, $X$ is fully invariant (i.e. $f^{-1}(X)=X=f(X)$) as desired.
\end{proof}

We would like to make a few concluding remarks here. The fact that $X$ is fully
invariant allows us to work with the \emph{entire} uniformization plane. Recall
that $\vp:\disk^\iy\to U_\iy(X)$ is a Riemann map with $\vp(\iy)=\infty$. Then
the map $f$ is transported to the uniformization plane on which we obtain a
well-defined map $g(x)=\vp^{-1}\circ f\circ \vp(x), x\in \idisk$. This
construction is exactly the same as the standard construction from complex
dynamics; it was used in a more complicated situation of a non-fully invariant
continuum in \cite{fokkmayeovertymc07} as well as above in Section 4 of this
paper (though in that case the map $g$ was not considered on the entire
$\idisk$).

By the results of \cite{fokkmayeovertymc07}, $f$ induces a covering map
$G:S^1\to S^1$ on the circle of prime ends of $T(X)$ (i.e., $g$ continuously
extends over $S^1=\bd \idisk$ as a covering map of the circle). It is easy to
check that $\deg(G)=-2$. Hence, $G$ has exactly three fixed points
$\{\al_1,\al_1,\al_3\}$ in $S^1$. Suppose that $C_n$ is a fundamental chain of
crosscuts of the prime end $\al_j$. Since $\diam(C_n)\to 0$ and $f$ is fixed
point free on $T(X)$, for all $n$ sufficiently large, $f(C_n)\cap C_n=\0$.
Hence from that point on either $f(C_n)$ separates $C_n$ from infinity in
$\C\sm T(X)$ (the points are ``repelled'' from $X$ in the sense of the order on
the ray $R_{\al_j}$ in which case we have the so-called \emph{outchannel}
defined more precisely in \cite{fokkmayeovertymc07}), or $C_n$ separates
$f(C_n)$ from infinity in $\C\sm T(X)$ (the points are ``attracted'' towards
$X$ in the sense of the order on the ray $R_{\al_j}$ in which case we have the
so-called \emph{inchannel} defined more precisely in
\cite{fokkmayeovertymc07}). By \cite{fokkmayeovertymc07} there exists exactly
one outchannel, therefore two of the fixed prime ends must correspond to
inchannels. Hence the induced map $G$ on the circle of prime ends has
degree $-2$, exactly
one repelling fixed point and two attracting fixed points.
This details the dynamics in the neighborhood of $X$.

\bibliographystyle{amsalpha}
\providecommand{\bysame}{\leavevmode\hbox to3em{\hrulefill}\thinspace}
\providecommand{\MR}{\relax\ifhmode\unskip\space\fi MR }
% \MRhref is called by the amsart/book/proc definition of \MR.
\providecommand{\MRhref}[2]{%
  \href{http://www.ams.org/mathscinet-getitem?mr=#1}{#2}
}
\providecommand{\href}[2]{#2}
\bibliography{c:/lex/references/refshort}

\begin{thebibliography}{FMOT07}


\bibitem[Aki99]{akis99}
V.~Akis, \emph{On the plane fixed point problem}, Topology Proc. \textbf{24}
(1999), 15--31.

\bibitem[BO06]{bo06} A. Blokh, L. Oversteegen,
\emph{Monotone images of Cremer Julia sets} (2006),
to appear in Houston Journal of Mathematics, arXiv:0809.1193.

\bibitem[Bel67]{bell67}
H.~Bell, \emph{On fixed point properties of plane continua}, Trans. A.~M.~S.
\textbf{128} (1967), 539--548.

\bibitem[Bel78]{bell78}
\bysame, \emph{A fixed point theorem for plane homeomorphisms}, Fund. Math.
\textbf{100} (1978), 119--128, See also: Bull. A.~M.~S. 82(1976), 778-780.

\bibitem[Bon04]{boni04}
M.~Bonino, \emph{A {B}rouwer like theorem for orientation reversing
homeomorphisms of the sphere}, Fund. Math. \textbf{182} (2004), 1--40.

\bibitem[Bro12]{brou12a}
L.~E.~J. Brouwer, \emph{Beweis des ebenen {T}ranslationessatzes}, Math. Ann.
\textbf{72} (1912), 35--41.

\bibitem[Bro84]{brow84a}
Morton Brown, \emph{A new proof of {B}rouwer's lemma on translation arcs},
Houston J. of Math. \textbf{10} (1984), 35--41.

\bibitem[CL51]{cartlitt51}
M.~L. Cartwright and J.~E. Littlewood, \emph{Some fixed point theorems}, Annals
of Math. \textbf{54} (1951), 1--37.

\bibitem[CMT]{chmatuty06} D. Childers, J. Mayer, M. Tuncali and E. Tymchatyn,
\emph{Indecomposable continua and the Julia sets of rational maps}, Contemp.
Math. \textbf{396} (2006), 1--20.

\bibitem[Fat87]{fath87}
Albert Fathi, \emph{An orbit closing proof of {B}rouwer's lemma on translation
arcs}, L'enseignement Math\'{e}matique \textbf{33} (1987), 315--322.

\bibitem[FMOT07]{fokkmayeovertymc07}
R.~J. Fokkink, J.C. Mayer, L.~G. Oversteegen, and E.D. Tymchatyn,
\emph{The plane fixed point problem}, arXiv:0805.1184v2.

\bibitem[Fra92]{fran92}
J.~Franks, \emph{A new proof of the {B}rouwer plane translation theorem},
Ergodic Theory and Dynamical Systems \textbf{12} (1992), 217--226.

\bibitem[Gui94]{guil94}
L.~Guillou, \emph{Th\'eot\`eme de translation plane de {B}rouwer et
g\'en\'eralisations du th\'eot\`eme de {P}oincar\'e-{B}irkhoff}, Topology
\textbf{33} (1994), 331--351.

\bibitem[Ili70]{ilia70}
S.~D. Iliadis, \emph{Location of continua on a plane and fixed points}, Vestnik
Moskovskogo Univ. Matematika \textbf{25} (1970), no.~4, 66--70, Series {I}.

\bibitem[Kra74]{kras74}
J.~Krasinkiewicz, \emph{On internal composants of indecomposable plane
continua}, Fund. Math. \textbf{84} (1974), 255--263.

\bibitem[KP94]{kulkpink94}
R.~S. Kulkarni and U. Pinkall.
\newblock A canonical metric for {M}\"{o}bius structures and its applications.
\newblock {\em Math. Z.}, 216(1):89--129, 1994.

\bibitem[Mil00]{miln00}
J.~Milnor, \emph{Dynamics in one complex variable}, second ed., Vieweg,
Wiesbaden (2000).

\bibitem[OT08]{overtymc07}
Lex~G. Oversteegen and E.D. Tymchatyn.
\newblock Extending isotopies of planar continua,
\newblock arXiv:0811.0364v1.

\bibitem[Pom92]{pom92} Ch. Pommerenke, \emph{Boundary Behaviour of
Conformal Maps,} Springer-Verlag (1992).

\bibitem[Rog98]{roge98}
J.~T. Rogers, Jr., \emph{Diophantine conditions imply critical points on the
  boundaries of {S}iegel disks of polynomials}, Comm. Math Phys. \textbf{195}
  (1998), 175--193.

\bibitem[Sie68]{siek68}
K.~Sieklucki, \emph{On a class of plane acyclic continua with the fixed point
property}, Fund. Math. \textbf{63} (1968), 257--278.

\bibitem[Ste35]{ster35} Sternbach, Problem {\bf 107} (1935),
in: {\em The {S}cottish {B}ook: {M}athematics from the {S}cottish
{C}af\'e}, Birkhauser, Boston, 1981, 1935.

\bibitem[Thu85]{thu85} W. Thurston, \emph{The combinatorics of iterated rational maps}
(1985), in: ``Complex dynamics: families and friends'', ed. by D. Schleicher,
A K Peters (2008), Wellesley, MA, pp. 1-108.

\bibitem[Why42]{whyb42}
G.~T. Whyburn, \emph{Analytic topology}, vol.~28, AMS Coll. Publications,
Providence, RI, 1942.


\end{thebibliography}

\end{document}